\documentclass[12pt,a4paper]{article}
\usepackage[cp1251]{inputenc}
\usepackage[english]{babel}
\usepackage{color}
\usepackage[colorlinks=true,linkcolor=blue,filecolor=blue,citecolor=blue,urlcolor=blue]{hyperref}
\usepackage{amsmath,amssymb,amsthm}
\usepackage{graphicx}
\usepackage{comment}
\usepackage{enumitem}
\usepackage{subfigure,multirow}
\usepackage{amsthm}
\usepackage{thmtools}
\declaretheoremstyle[notefont=\bfseries,notebraces={}{},%
   headpunct={},postheadspace=1em,bodyfont=\it]{mystyle}
\declaretheorem[style=mystyle,numbered=no,name=Theorem]{thm-hand}

\usepackage{geometry}
\DeclareMathOperator{\sgn}{sgn}

\DeclareMathOperator{\Kil}{Kil}

\DeclareMathOperator{\Lip}{Lip}
\DeclareMathOperator{\Image}{Im}
\DeclareMathOperator{\Real}{Re}

\DeclareMathOperator{\interior}{int}

\DeclareMathOperator{\ch}{cosh}
\DeclareMathOperator{\sh}{sinh}
\DeclareMathOperator{\arctg}{arctan}

\DeclareMathOperator{\ad}{\mathrm{ad}}
\DeclareMathOperator{\Ad}{\mathrm{Ad}}
\DeclareMathOperator{\Exp}{\mathrm{Exp}}
\DeclareMathOperator{\sspan}{\mathrm{span}}

\newcommand{\tmax}{t_{\mathrm{max}}}
\newcommand{\tconj}{t_{\mathrm{conj}}}
\newcommand{\tcut}{t_{\mathrm{cut}}}

\newcommand{\g}{\mathfrak{g}}

\newcommand{\R}{\mathbb{R}}
\newcommand{\C}{\mathbb{C}}

\newcommand{\Z}{\mathbb{Z}}

\newcommand{\SU}{\mathrm{SU}}
\newcommand{\SL}{\mathrm{SL}}

\newcommand{\SO}{\mathrm{SO}}

\newcommand{\const}{\mathrm{const}}

\newcommand{\hhh}{\bar{h}_3}
\newcommand{\st}{\sin{\tau}}
\newcommand{\ct}{\cos{\tau}}
\newcommand{\sht}{\sinh{\tau}}
\newcommand{\cht}{\cosh{\tau}}

\newcommand{\arglh}{\frac{t \mu h_3}{2 I_1}}

\newcommand{\A}{\mathcal{A}}

\newcommand{\Conj}{\mathrm{Conj}}
\newcommand{\Cut}{\mathrm{Cut}}

\DeclareMathOperator{\arcsinh}{arcsinh}

\theoremstyle{definition}
\newtheorem{definition}{Definition}
\newtheorem{remark}{Remark}

\theoremstyle{plain}
\newtheorem{corollary}{Corollary}
\newtheorem{lemma}{Lemma}
\newtheorem{theorem}{Theorem}
\newtheorem{proposition}{Proposition}

\newenvironment{enumerate*}%
  {\begin{enumerate}%
    \setlength{\itemsep}{1pt}%
    \setlength{\parskip}{1pt}}%
  {\end{enumerate}}

\title{One-parametric series of $\SO_{1,1}$-symmetric (sub-)Lorentzian structures on the universal covering of $\SL_2(\R)$\footnote{This work was supported by the Russian Science Foundation under grant no.~25-21-00681, \href{https://rscf.ru/project/25-21-00681/}{https://rscf.ru/project/25-21-00681/} and performed in Ailamazyan Program Systems Institute of Russian Academy of Sciences.}}

\author{
A.\,V.~Podobryaev \\ A.\,K.~Ailamazyan Program Systems
Institute of RAS \\ \tt{alex@alex.botik.ru} \\
}

\date{}

\begin{document}

\maketitle

\begin{abstract}
We consider a one-parametric series of left-invariant Lorentzian structures on the universal covering of the Lie group $\SL_2(\R)$.
These structures have $\SO_{1,1}$-symmetry and they are deformations of the anti-de\,Sitter Lorentzian manifold.
We study the global optimality of extremal trajectories, i.e., we describe the longest arcs.
The sub-Lorentzian structure appears as a limit case of the considered series of Lorentzian structures.
We study how the several properties of the Lorentzian structures deform to the properties of the sub-Lorentzian structure.

\textbf{Keywords}: Lorentzian geometry, sub-Lorentzian geometry, cut locus, attainable set, geometric control theory.

\textbf{AMS subject classification}:
53C50, 
53C30, 
49J15. 
\end{abstract}

\section*{\label{sec-introduction}Introduction}

A Lorentzian structure on a manifold $M$ of dimension $\dim{M} = n+1$ is a non-degenerate quadratic form of signature $(1,n)$.
With the help of this quadratic form one can define the length of non-spacelike vectors and then the length of an admissible curve, i.e.,
such a curve that almost all its tangent vectors are non-spacelike. The longest arc from the point $x_0 \in M$ to the point $x_1 \in M$ is the curve where the supremum
of the Lorentzian length of the admissible curves from the point $x_0$ to the point $x_1$ is achieved if it exists.
Notice that the notion of the shortest arc is meaningless.

We consider a series of left-invariant Lorentzian structures on the universal covering of the Lie group $\SL_2(\R) \simeq \SU_{1,1}$.
If the Lorentzian structure is defined by the Killing form on this space, then this manifold is called \emph{the anti-de\,Sitter space}.
We consider a one-parametric deformation of the Lorentzian metric of the anti-de\,Sitter space.
So, the anti-de\,Sitter space is a particular case in our series of Lorentzian manifolds.
Moreover, the anti-de\,Sitter case splits this one-parametric series into the two subseries,
named oblate and prolate cases, respectively. The names of this subseries are due to the shape of the corresponding future cone.
The limit case of the oblate series is the sub-Lorentzian manifold. In this case, the future cone lies in some hyperplane
which means an additional non-holonomic constraint.

Moreover, the Lorentzian structures in our series have $\SO_{1,1}$-symmetry.
This allows us to write an explicit parametrization of extremal trajectories as products of two one-parametric subgroups.
In particular, these Lorentzian structures are geodesic orbit.
See~\cite{dusek-kowalski,chen-wolf-zhang} for some properties of geodesic orbit Lorentzian manifolds.

We use the language and the methods of geometric control theory~\cite{agrachev-sachkov} to investigate the attainable sets, the caustic, the cut locus and the domain where
the longest arcs exist. This approach was recently applied to several left-invariant Lorentzian structures: on the hyperbolic plane~\cite{sachkov-l1,sachkov-l2},
on the two-dimensional anti-de\,Sitter space~\cite{ali-sachkov}, on the Berger type Lorentzian manifolds~\cite{podobryaev-sl2-axisymm}
(in particular, on another series of Lorentzian structures on the universal covering of the Lie group $\SL_2(\R)$).

The notion of the sub-Lorentzian structure was introduced in works of M.~Grochowski~\cite{grochowski1,grochowski2}.
The sub-Lorentzian structure that appears as a limit case of our series of Lorentzian structures is a part of the known classification of left-invariant contact three dimensional sub-Lorentzian structures~\cite{classification}. This sub-Lorentzian structure was particulary studied by E.~Grong and A.~Vassil'ev~\cite{grong-vasiliev},
see also~\cite{chang-markina-vasilev}.
A lot of results about this limit structure follow from the general results about the whole subseries of oblate Lorentzian structures.

The main results of this paper are the following.

In the oblate case, the cut locus coincides with the first caustic and does not depend on the parameter.
The cut locus and the caustic for the limit sub-Lorentzian structure are the same.
However, the attainable set depends on the parameter and the attainable set for the limit structure does not coincide with the limit of the attainable sets in the series.
The reason is that the boundary of the sub-Lorentzian attainable set consists of abnormal extremal trajectories that are different from the light-like extremal trajectories in the Lorentzian case.
These abnormal extremal trajectories are concatenations of light-like arcs and have at most one switching of light-like control.

In the prolate case, the system is completely controllable, i.e., the attainable set coincides with the whole group.
There are admissible loops passing through every point. This implies that the longest arcs never exist.
However, we find some bounds of the first conjugate time and the first Maxwell time for the symmetries of the problem.
We prove that the first conjugate point appears earlier than two different extremal trajectories meet one another, i.e.,
earlier than a Maxwell point in this case.

This paper has the following structure.
In Section~\ref{sec-optctrlprob-statement} we state the problem of finding the Lorentzian longest arcs as an optimal control problem and
we describe a model of the manifold we are working on, i.e., the universal covering of the Lie group $\SL_2(\R)$.
Section~\ref{sec-geodesics} gives the equations of extremal trajectories in coordinates.
Next, we consider the oblate case (Section~\ref{sec-oblate}) and the prolate case (Section~\ref{sec-prolate}) separately.
These cases depend on the shape of the control set.

For the oblate case, first we describe the attainable set in Section~\ref{sec-oblate-atset}. Second, we find the first caustic in Section~\ref{sec-oblate-conj}.
Third, in Section~\ref{sec-oblate-opt} we study the optimality of extremal trajectories, we find the cut locus and the domain where the longest arcs exist.
Finally, we consider the limit case, i.e., the sub-Lorentzian structure in Section~\ref{sec-sub-Lorentzian-case}.

For the prolate case, we prove that the attainable set coincides with the whole manifold (Section~\ref{sec-prolate-atset}). It follows that there are no longest arcs.
However, in Sections~\ref{sec-prolate-conj}, \ref{sec-prolate-maxwell} we compare the first conjugate time and the first Maxwell time for the symmetries of the problem.

\section{\label{sec-optctrlprob-statement}Optimal control problem statement}

In this section, we define the universal covering of the Lie group we deal with and we give the multiplication law for this universal covering.
Next, we provide the formal statement of the optimal control problem that we consider and give some necessary notation.

Consider the Lie group
$$
\SU_{1,1} = \left\{
\left(
\begin{array}{cc}
z & w \\
\bar{w} & \bar{z} \\
\end{array}
\right) \, \Bigm| \, z,w \in \C
\right\} \simeq \SL_2(\R).
$$
We will denote by $G = \widetilde{\SU}_{1,1}$ its universal covering and by $\g$ the corresponding Lie algebra. As usual $\Ad$ and $\ad$ will be the adjoint representations of the Lie group and Lie algebra, respectively, and $\exp : \g \rightarrow G$ be the Lie exponential map. The universal covering is defined by the following formula:
$$
G \simeq \R \times \C \ni (c,w) \mapsto
\left(
\begin{array}{cc}
e^{ic}\sqrt{1 + |w|^2} & w \\
\bar{w} & e^{-ic}\sqrt{1 + |w|^2} \\
\end{array}
\right) \in \SU_{1,1}.
$$
It is not difficult to derive the multiplication rule in the group $G$. If $(c_1, w_1) \cdot (c_2, w_2) = (c, w)$, then
\begin{equation}
\label{eq-mult}
\begin{gathered}
c = c_1 + c_2 + \arctg{\frac{\Image{(w_1\bar{w}_2e^{-i(c_1+c_2)})}}{\sqrt{1+|w_1|^2}\sqrt{1+|w_2|^2} + \Real{(w_1\bar{w}_2e^{-i(c_1+c_2)})}}},\\
w = w_2\sqrt{1+|w_1|^2}e^{ic_1} + w_1\sqrt{1+|w_2|^2}e^{-ic_2}.\\
\end{gathered}
\end{equation}

Let $e_1,e_2,e_3$ be a basis of the Lie algebra $\g$ such that
$$
e_1 = \frac{1}{2}\left(
\begin{array}{rr}
i & 0\\
0 & -i\\
\end{array}
\right), \qquad
e_2 = \frac{1}{2}\left(
\begin{array}{rr}
0 & 1\\
1 & 0\\
\end{array}
\right), \qquad
e_3 = \frac{1}{2}\left(
\begin{array}{rr}
0 & i\\
-i & 0\\
\end{array}
\right),
$$
\begin{equation}
\label{eq-commutators}
[e_1, e_2] = e_3, \qquad [e_1,e_3] = -e_2, \qquad [e_2,e_3] = -e_1.
\end{equation}
Consider the following left-invariant optimal control problem on the group $G$ with the control set (which is a cone)
$$
C = \{u_1e_1 + u_2e_2 + u_3e_3 \in \g \, | \, I_1 u_1^2 - I_2 u_2^2 - I_3 u_3^2 \geqslant 0, \ u_1 > 0\}, \qquad I_1,I_2,I_3 > 0.
$$
The goal is to find a Lipschitz curve $x : [0,t_1] \rightarrow G$ and a control $u \in L^{\infty}([0,t_1],C)$ such that
\begin{equation}
\label{eq-optctrl-problem}
x(0) = e, \ x(t_1) = x_1, \qquad \dot{x}(t) = L_{x(t) *} u(t), \qquad \int\limits_0^{t_1}{\sqrt{I_1 u_1^2 - I_2 u_2^2 - I_3 u_3^2} \, dt} \rightarrow \max,
\end{equation}
where $e \in G$ is the identity element, $x_1 \in G$ and $t_1 > 0$ are fixed, while $L_x$ denotes the left-shift by an element $x \in G$ and $L_{x *}$ is its differential.

In this paper, we consider only the case when $I_1 = I_2$. We call it \emph{the $\SO_{1,1}$-symmetric case} since the Lorentzian structure is invariant under the action of the Lie group $\SO_{1,1} = \{\exp{(\ad{te_3})} \, | \, t \in \R\}$. This group acts on the Lie algebra $\g$ by hyperbolic rotations in the plane $\sspan{\{e_1,e_2\}}$.
Also this group
$$
\SO_{1,1} = \left\{
\left(
\begin{array}{cc}
\ch{t} & i\sh{t} \\
-i\sh{t} & \ch{t} \\
\end{array}
\right) \, \Bigm| \, t \in \R
\right\}
$$
acts on the Lie group $G$ by conjugation.
In other words, we consider an invariant Lorentzian structure on the universal covering of the homogeneous space $(\SU_{1,1} \times \SO_{1,1}) / \SO_{1,1}$, where the stabilizer $\SO_{1,1}$ is embedded into the group $\SU_{1,1} \times \SO_{1,1}$ in the anti-diagonal way.

Let us introduce the following parameter that measures the oblateness of the set of vectors with the unit Lorentzian length
$$
\mu = \frac{I_1}{I_3} - 1, \qquad \mu \in (-1, +\infty).
$$
We will use the following names for cases:
$$
\begin{array}{ll}
\mu < 0, \quad \text{i.e.}, \quad I_1 = I_2 < I_3 & \qquad \text{\emph{the oblate case}}, \\
\mu = 0, \quad \text{i.e.}, \quad I_1 = I_2 = I_3 & \qquad \text{\emph{the symmetric case}}, \\
\mu > 0, \quad \text{i.e.}, \quad I_1 = I_2 > I_3 & \qquad \text{\emph{the prolate case}}. \\
\end{array}
$$
If $\mu \rightarrow -1$, i.e., $I_3 \rightarrow +\infty$, then the problem statement~\eqref{eq-optctrl-problem} tends to the sub-Lorentzian one considered in~\cite{grong-vasiliev}. The symmetric case was studied also in~\cite{grong-vasiliev}, and more precisely in~\cite{podobryaev-sl2-axisymm}.
So, we do not consider the symmetric case in this paper.

\begin{remark}
\label{rem-graph}
We plot graphs of extremal trajectories in the $(c,w)$-space.
Note that we consider the $c$-axis as the vertical axis.
So, the cone of admissible velocities, i.e., the future cone, is directed upward.
Moreover, we plot graphs in the $\SO_{1,1}$-factor, i.e., in the plane $\Real{w} = 0$.
\end{remark}

\section{\label{sec-geodesics}Equations of extremal trajectories}

It follows from the Pontryagin maximum principle~\cite{pontryagin} (see its modern statement in~\cite[Th.~12.3]{agrachev-sachkov}) that any optimal trajectory with an optimal control $\hat{u} = (\hat{u}_1,\hat{u}_2,\hat{u}_3)$ is a part of \emph{an extremal trajectory} that is a projection of an extremal.
\emph{An extremal} is a trajectory $\lambda : [0,t_1] \rightarrow T^*G$ of a Hamiltonian vector field on the cotangent bundle $T^*G$
with a Hamiltonian $H_{\hat{u}}^{\nu}$ such that $H_{\hat{u}}^{\nu}(\lambda) = \max\limits_{u \in C}{H^{\nu}_u(\lambda)}$ and $(\hat{\lambda}, \nu) \neq 0$, where
$$
H_{u}^{\nu} = u_1h_1 + u_2h_2 + u_3h_3 + \frac{\nu}{2}\sqrt{I_1 u_1^2 - I_1 u_2^2 - I_3 u_3^2},
$$
and the functions $h_i(\lambda) = \langle L_{\pi(\lambda)}e_i, \lambda \rangle$, $\lambda \in T^*G$, $i=1,2,3$ are linear Hamiltonians on the fibers of the cotangent bundle,
where $\pi : T^*G \rightarrow G$ is the natural projection.

We may consider the functions $h_1,h_2,h_3$ as coordinates in the dual space of the Lie algebra $\g^* = T^*_eG$.
The extremals satisfy the following differential equations in the trivialization of the cotangent bundle $T^*G$ by left-shifts.

\begin{proposition}
\emph{(1)} The extremals are solutions of the following Hamiltonian system corresponding to the maximized Hamiltonian
$H = \frac{1}{2}\left( -\frac{h_1^2}{I_1} + \frac{h_2^2}{I_1} + \frac{h_3^2}{I_3} \right)$
\begin{equation}
\label{eq-hamiltonian-system}
\begin{array}{rcl}
\dot{h}_1(t) & = & \frac{\mu}{I_1} h_2(t) h_3(t), \\
\dot{h}_2(t) & = & \frac{\mu}{I_1} h_1(t) h_3(t), \\
\dot{h}_3(t) & = & 0, \\
\dot{x}(t) & = & L_{x(t)} \left( -\frac{h_1(t)}{I_1}e_1 + \frac{h_2(t)}{I_1}e_2 + \frac{h_3(t)}{I_3}e_3 \right). \\
\end{array}
\end{equation}
\emph{(2)} The initial covectors of normal extremals \emph{(}corresponding to $\nu = 1$\emph{)} are located on the half of the hyperboloid $H = -\frac{1}{2}$, $h_1 < 0$. These normal extremals are time-like and have unit Lorentzian velocity. \\
\emph{(3)} The initial covectors of abnormal extremals \emph{(}corresponding to $\nu = 0$\emph{)} are located on the half of the cone $H = 0$, $h_1 < 0$.
These abnormal extremals are light-like.
\end{proposition}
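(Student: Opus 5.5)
The plan is to apply the Pontryagin maximum principle in the left trivialization and carry out the maximization of $H^\nu_u$ over the cone $C$ explicitly. Two things have to be checked first: the form of the Hamiltonian system for a left-invariant problem, and the maximization itself, which is where the sign conventions enter.

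\textbf{The vertical equations.} For a left-invariant problem the vertical part of the Hamiltonian system for a Hamiltonian $H(h_1,h_2,h_3)$ is $\dot h_i = \{H,h_i\}$, with Poisson brackets $\{h_i,h_j\} = h_{[e_i,e_j]}$ up to the paper's sign convention. From \eqref{eq-commutators} we get $\{h_1,h_2\}=h_3$, $\{h_1,h_3\}=-h_2$, $\{h_2,h_3\}=-h_1$. The horizontal part is $\dot x = L_{x*}\,dH$, where $dH$ is viewed as an element of $\g$.

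\textbf{Maximization for $\nu=1$.} Writing $H^1_u$ without the factor $\tfrac12$ (the factor can be absorbed by rescaling the covector), we maximize $\langle u,h\rangle + \sqrt{Q(u)}$ with $Q(u)=I_1u_1^2-I_1u_2^2-I_3u_3^2$. The function is positively homogeneous of degree one in $u$. Hence the maximum over the cone $C$ is finite, and then equal to $0$, exactly when $\langle u,h\rangle\le -\sqrt{Q(u)}$ on $C$. Using the inverse quadratic form, this reverse Cauchy--Schwarz inequality for Lorentzian forms gives the condition
\[
-\frac{h_1^2}{I_1}+\frac{h_2^2}{I_1}+\frac{h_3^2}{I_3} = -1, \qquad h_1<0,
\]
that is, $H=-\tfrac12$ on the lower sheet. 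The maximizer is $u = -h_1/I_1\, e_1 + h_2/I_1\, e_2 + h_3/I_3\, e_3$, up to scaling; the scaling is fixed by the unit-speed normalization $Q(u)=1$, which holds precisely because $H=-\tfrac12$.

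Since $u=dH$ with $H=\tfrac12(-h_1^2/I_1+h_2^2/I_1+h_3^2/I_3)$, the standard argument (the maximized Hamiltonian generates the same flow on the level set) gives the horizontal equation in \eqref{eq-hamiltonian-system}.

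\textbf{The vertical equations computed.} We have
\[
\dot h_1=\{H,h_1\}=\frac{h_2}{I_1}\{h_2,h_1\}+\frac{h_3}{I_3}\{h_3,h_1\},
\]
which with the above brackets gives $h_2h_3(1/I_1 - 1/I_3)$ up to sign. Since $1/I_3 = (1+\mu)/I_1$, this is $\pm\frac{\mu}{I_1}h_2h_3$. Similarly $\dot h_2=\pm\frac{\mu}{I_1}h_1h_3$. Finally $\dot h_3 = 0$, because $\{h_1^2,h_3\}$ and $\{h_2^2,h_3\}$ cancel: $2h_1h_2 - 2h_2h_1 = 0$. This is the $\SO_{1,1}$ symmetry: $h_3$ is the Casimir-like conserved momentum of the rotation.

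\textbf{The case $\nu=0$.} Maximizing the linear function $\langle u,h\rangle$ over $C$ requires $h\neq 0$ and a bounded supremum, i.e. $\langle u,h\rangle\le0$ on $C$. The supremum is attained at a nonzero $u$ only if $h$ lies on the boundary of the dual cone, which is $H=0$, $h_1<0$. The maximizer is then the light-like ray $u\propto dH$, so the trajectory is light-like and its length vanishes. After reparametrization it satisfies the same system, and $H=0$ is preserved by the flow.

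\textbf{Main obstacle.} The delicate step is fixing signs: the Poisson bracket convention, and the choice of which sheet ($h_1<0$) makes the maximum finite given $u_1>0$. I would verify these by checking that $\dot h_1=\frac{\mu}{I_1}h_2h_3$ is consistent with the conservation of $H$, which holds since
\[
-\frac{h_1\dot h_1}{I_1}+\frac{h_2\dot h_2}{I_1} = \frac{\mu}{I_1^2}\bigl(-h_1h_2h_3 + h_2h_1h_3\bigr)=0,
\]
and the corresponding sign then fixes the bracket convention.
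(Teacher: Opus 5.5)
Your proposal is correct. The vertical equations are computed exactly as in the paper, via $\dot h_i=\{H,h_i\}$, $\{h_i,h_j\}=h_{[e_i,e_j]}$ and the commutator relations. The maximum condition, however, is handled differently.

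The paper replaces the length by the quadratic energy, reads off $u=dH$ from the maximum condition, and essentially asserts parts (2) and (3). You instead keep the degree-one homogeneous Hamiltonian $\langle u,h\rangle+\nu\sqrt{Q(u)}$ and apply the reverse Cauchy--Schwarz inequality with the dual form $-2H$. This shows that the maximum over $C$ is attained, necessarily on the ray $u\propto dH$, exactly when $H=-\frac{1}{2}$, $h_1<0$ for $\nu=1$, and exactly when $H=0$, $h_1<0$ for $\nu=0$. So (2) and (3), including light-likeness of abnormal extremals, become genuine consequences of the maximum principle.

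Your route also avoids a point the energy shortcut glosses over. Because $Q$ is indefinite, $\langle u,h\rangle+\frac{1}{2}Q(u)$ is unbounded above on $C$ (rescale $u$), so there $u=dH$ is only a critical point, not a maximizer. What the energy formulation buys in return is the parametrization for free. You need a time reparametrization instead: normalizing $Q(u)=1$ when $\nu=1$, and an arbitrary positive rescaling when $\nu=0$.

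Your remark on the factor $\frac{\nu}{2}$ in $H^\nu_u$ is also right. Taken literally, it would place the normal covectors on $H=-\frac{1}{8}$, and rescaling the covector is what reconciles it with $H=-\frac{1}{2}$.

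Two minor points.

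First, finiteness of the maximum alone only gives $H\leqslant-\frac{1}{2}$. It is attainment at $\hat u(t)\in C$ (note $0\notin C$) that forces equality. Your maximizer remark covers this, but it should be said explicitly.

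Second, your hedge ``up to sign'' is unnecessary. With the brackets you list, $\{h_2,h_1\}=-h_3$ and $\{h_3,h_1\}=h_2$ give exactly $\dot h_1=\bigl(\frac{1}{I_3}-\frac{1}{I_1}\bigr)h_2h_3=\frac{\mu}{I_1}h_2h_3$, and likewise $\dot h_2=\frac{\mu}{I_1}h_1h_3$. Your proposed check via conservation of $H$ could not have fixed the sign anyway, since it is unchanged when the vertical flow is reversed. The sign comes from the convention $\dot h_i=\{H,h_i\}$, $\{h_i,h_j\}=h_{[e_i,e_j]}$ itself.
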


\begin{figure}
\centering{
\minipage{0.45\textwidth}
  \centering{\includegraphics[width=0.7\linewidth]{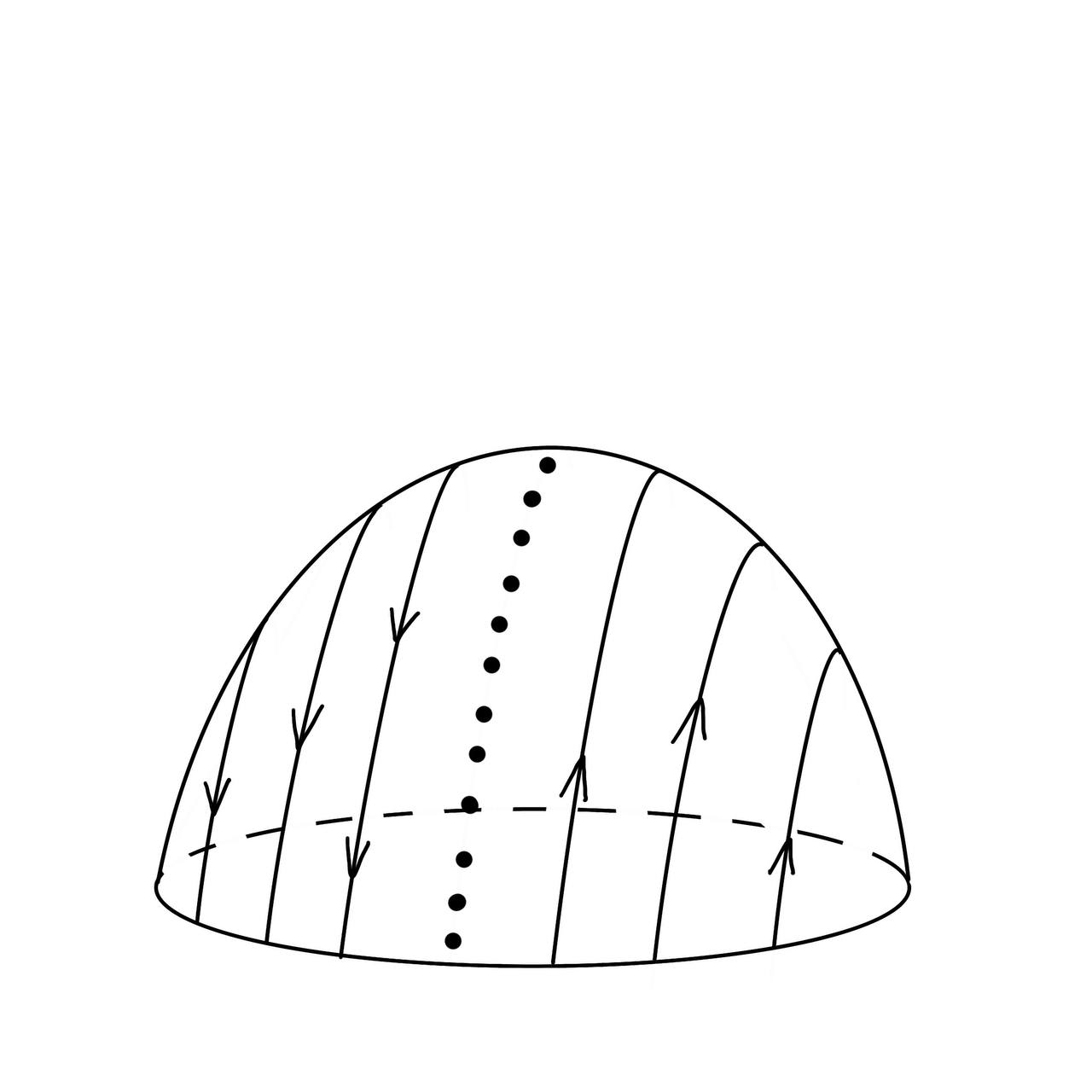} \\ $H(h) = -\frac{1}{2}$}
\endminipage
\hfil
\minipage{0.45\textwidth}
  \centering{\includegraphics[width=0.7\linewidth]{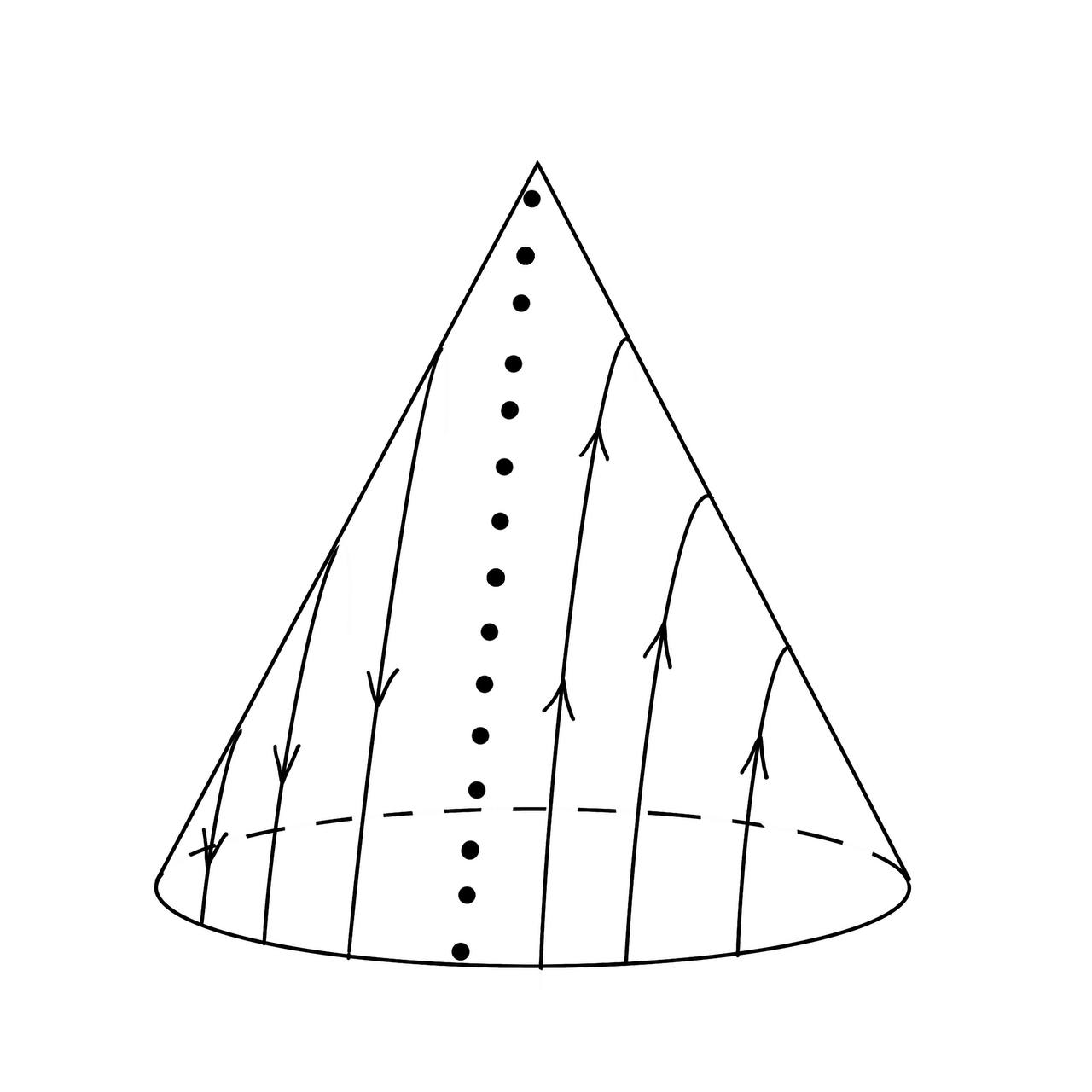} \\ $H(h) = 0$}
\endminipage}
\caption{\label{fig-vertsubsystem}The phase portraits for the Hamiltonian system for covectors.
The normal case corresponding to time-like extremal trajectories (left) and the abnormal case corresponding to light-like extremal trajectories (right).}
\end{figure}

\begin{proof}
It is well known that the Lorentzian length functional can be replaced by the quadratic energy functional.
Then, the maximum condition implies $u_1 = -\frac{h_1}{I_1}$, $u_2 = \frac{h_2}{I_1}$, $u_3 = \frac{h_3}{I_3}$.
Next, the Hamiltonian system for covectors in the form $\dot{h}_i = \{H, h_i\}$ (where $\{\,\cdot\, , \,\cdot\, \}$ is the Poisson bracket in the space $\g^*$)
can be derived using the equality $\{h_i,h_j\} = \langle [e_i,e_j], \,\cdot\, \rangle$ and the commutator relations~\eqref{eq-commutators}.
We refer to~\cite[Crl.~1, Rem.~8]{podobryaev-extr} for details.
\end{proof}

\begin{proposition}
\label{prop-geodesics}
An extremal trajectory with an initial covector $h(0) \in \g^*$ has the following parametrization
$$
g(t) = \exp{\frac{t}{I_1}\bigl(-h_1(0)e_1 + h_2(0)e_2 + h_3(0)e_3\bigr)} \cdot \exp{\left(\frac{t\mu h_3(0)}{I_1} e_3\right)}.
$$
\end{proposition}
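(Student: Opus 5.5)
The plan is to integrate the Hamiltonian system~\eqref{eq-hamiltonian-system} explicitly in the Lie algebra, and then to check that the proposed product of two one-parameter subgroups has the right left-logarithmic derivative. The point is that the vertical subsystem is just a hyperbolic rotation generated by $\ad e_3$. This is exactly the $\SO_{1,1}$-symmetry.

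\textbf{Step 1: the control along the extremal.} Write the control along the extremal as
$$
u(t) = -\frac{h_1(t)}{I_1}e_1 + \frac{h_2(t)}{I_1}e_2 + \frac{h_3(t)}{I_3}e_3,
$$
so that $\dot{x} = L_{x *}u$. By~\eqref{eq-hamiltonian-system}, $h_3$ is constant, and with $u_1 = -h_1/I_1$, $u_2 = h_2/I_1$ we get
$$
\dot{u}_1 = -\frac{\mu h_3}{I_1}u_2, \qquad \dot{u}_2 = -\frac{\mu h_3}{I_1}u_1, \qquad \dot{u}_3 = 0.
$$
From the commutators~\eqref{eq-commutators} we have $[e_3,e_1] = e_2$, $[e_3,e_2] = e_1$ and $[e_3,e_3] = 0$. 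So this linear system is precisely
$$
\dot{u} = -\frac{\mu h_3}{I_1}\,\ad e_3\,(u).
$$
By uniqueness of solutions of linear ODEs, it follows that $u(t) = \Ad\bigl(\exp(-s(t)e_3)\bigr)u(0)$ with $s(t) = \frac{t\mu h_3}{I_1}$.

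\textbf{Step 2: the candidate curve.} Set
$$
v = u(0) - \frac{\mu h_3}{I_1}e_3 = \frac{1}{I_1}\bigl(-h_1(0)e_1 + h_2(0)e_2 + h_3(0)e_3\bigr).
$$
The second equality uses $\frac{1}{I_3} - \frac{\mu}{I_1} = \frac{1+\mu}{I_1} - \frac{\mu}{I_1} = \frac{1}{I_1}$. Define $g(t) = \exp(tv)\exp(s(t)e_3)$. The product rule for the left-logarithmic derivative gives
$$
g^{-1}\dot{g} = \Ad\bigl(\exp(-s e_3)\bigr)v + \frac{\mu h_3}{I_1}e_3.
$$
Since $\Ad(\exp(-se_3))$ fixes $e_3$, this equals $\Ad(\exp(-se_3))u(0) - \frac{\mu h_3}{I_1}e_3 + \frac{\mu h_3}{I_1}e_3 = u(t)$. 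Moreover $g(0) = e$. By uniqueness for the ODE $\dot{x} = L_{x *}u(t)$ on $G$, the extremal trajectory coincides with $g(t)$.

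\textbf{Expected obstacle.} There is no real obstacle beyond bookkeeping: the main care point is the sign conventions. These are the sign in front of $h_1$ in the control, the direction of the $\Ad$ in the derivative formula $\frac{d}{dt}(\exp(tv)\exp(se_3))$, and the matching coefficient $\frac{1}{I_1}$ versus $\frac{1}{I_3}$ at $e_3$ in the first factor. The identity $\frac{1}{I_3} = \frac{1+\mu}{I_1}$ is what makes these coefficients match. I would verify the signs once by differentiating $\Ad(\exp(-se_3))u(0)$ against the explicit system above. One also notes that the argument uses only the equations, not the level $H$, so it applies to both normal and abnormal extremals.
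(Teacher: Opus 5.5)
Your proof is correct and follows essentially the same route as the paper. Both show that the control evolves by the hyperbolic rotation $\Ad\exp(-\frac{t\mu h_3}{I_1}e_3)$, and both use the product rule for the left-logarithmic derivative of $\exp(tv)\exp(s(t)e_3)$ together with $\frac{1}{I_3}-\frac{\mu}{I_1}=\frac{1}{I_1}$ to recover $u(t)$; the only difference is that you derive $u(t)$ from the system and invoke uniqueness, whereas the paper directly verifies that the candidate pair $(x(t),h(t))$ solves the Hamiltonian system.
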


\begin{proof}
Let us check by direct computation that the curve $\lambda(t) = L_{x(t)*}h(t)$ is an extremal, i.e., a solution of the Hamiltonian system, where
$h(t) = (\Ad{\exp{(\frac{t\mu h_3(0)}{I_1} e_3)}})^* h(0) = (\Ad^*{\exp{(-\frac{t\mu h_3(0)}{I_1} e_3)}}) h(0)$.

First, let us check the Hamiltonian subsystem for covectors. The definition of $h(t)$ means that the covector at a moment $t$ is the result of the hyperbolic rotation of the initial covector
$$
\begin{array}{ccl}
\left(
\begin{array}{l}
h_1(t) \\
h_2(t) \\
\end{array}
\right)
& = & R^h_{\frac{t\mu h_3(0)}{I_1}}
\left(
\begin{array}{l}
h_1(0) \\
h_2(0) \\
\end{array}
\right), \\
h_3(t) & = & \const, \\
\end{array} \qquad \text{where} \qquad
R^h_a = \left(
\begin{array}{rr}
\cosh{a} & \sinh{a}\\
\sinh{a} & \cosh{a}\\
\end{array}
\right).
$$
Obviously, the covector $h(t)$ satisfies equations~\eqref{eq-hamiltonian-system}, see also Fig.~\ref{fig-vertsubsystem}.

Second, denoting by $L_x$ and $R_x$ the left- and the right-shift by an element $x \in G$, we obtain
$$
\dot{x}(t) = L_{\exp{\frac{t}{I_1}\left(-h_1(0)e_1 + h_2(0)e_2 + h_3(0)e_3\right)} *} R_{\exp{\left(\frac{t\mu h_3(0)}{I_1} e_3\right)} *}
\frac{1}{I_1}\bigl(-h_1(0)e_1 + h_2(0)e_2 + h_3(0)e_3\bigr) +
$$
$$
+ L_{\exp{\frac{t}{I_1}\left(-h_1(0)e_1 + h_2(0)e_2 + h_3(0)e_3\right)} *} L_{\exp{\left(\frac{t\mu h_3(0)}{I_1}\right)} *} \frac{\mu h_3(0)}{I_1} e_3  =
$$
$$
= L_{x(t) *} \left( \left(\Ad{\exp{\left(-\frac{t\mu h_3(0)}{I_1} e_3\right)}}\right) \frac{1}{I_1}\bigl(-h_1(0)e_1 + h_2(0)e_2 + h_3(0)e_3\bigr) + \frac{\mu h_3(0)}{I_1} e_3 \right).
$$
Using the solution of the Hamiltonian subsystem for covectors, since
in the basis $-e_1,e_2,e_3$ the operator $\Ad{\exp{\left(-\frac{t\mu h_3(0)}{I_1} e_3\right)}}$ looks like the hyperbolic rotation by $\frac{t\mu h_3(0)}{I_1}$, we get
$$
\left(\Ad{\exp{\left(-\frac{t\mu h_3(0)}{I_1} e_3\right)}}\right) \frac{1}{I_1}\Bigl(-h_1(0)e_1 + h_2(0)e_2 + h_3(0)e_3\Bigr) + \frac{\mu h_3(0)}{I_1} e_3  =
$$
$$
= \frac{1}{I_1}\Bigl(-h_1(t)e_1 + h_2(t)e_2 + h_3(0)e_3\Bigr) + \frac{1}{I_1}\Bigl(\frac{I_1}{I_3}-1\Bigr)h_3(0)e_3 =
$$
$$
= -\frac{h_1(t)}{I_1}e_1 + \frac{h_2(t)}{I_1}e_2 + \frac{h_3(t)}{I_3}e_3 = u_1(t)e_1 + u_2(t)e_2 + u_3(t)e_3.
$$
Hence, $\dot{x}(t) = L_{x(t) *} (u_1(t)e_1+u_2(t)e_2+u_3(t)e_3)$.
\end{proof}

\begin{remark}
\label{rem-isometry-group}
It turns out that the extremal trajectories are the products of two one-parametric subgroups (Proposition~\ref{prop-geodesics}).
We gave an explicit proof of this fact, however, it follows from some general considerations.
Note that the group $\SO_{1,1}$ acts on $\g$ like $\Ad{\exp{(te_3)}}$ by automorphisms that are hyperbolic rotations in the plane $\sspan{\{e_1, e_2\}}$.
It follows that the dual action on $\g^*$ by $\Ad^*{\exp{(te_3)}}$ induces the action of the same group on the group $G$ by isometries, see~\cite[Th.~1]{podobryaev-symmetries}. Whence, the isometry group of the corresponding Lorentzian structure contains the group $G \times \SO_{1,1}$.
The corresponding isotropy subgroup is $K^h = \left\{ \left((0, i \sinh{a}), -a\right) \, | \, a \in \R \right\}$.
Since the vertical part of the Hamiltonian vector field is tangent to the orbits of the isotropy group,
then the extremal trajectory corresponding to the initial covector $h \in \g^*$ is an orbit of one-parametric subgroup of isometries
$$
g(t) = \exp{t\left(d_hH + Z\right)},
$$
where $Z$ is an element of the Lie algebra of the isotropy subgroup such that $h(t) = (\Ad^*{tZ})h(0)$, see~\cite[Lem.~3.4]{podobryaev-hg}.
Note that in this paper this fact is proved for sub-Riemannian structure but the proof for arbitrary Hamiltonian system is quite the same.

We have $d_hH = -\frac{h_1}{I_1}e_1 + \frac{h_2}{I_1}e_2 + \frac{h_3}{I_3}e_3$ and $Z = -\frac{\mu h_3}{I_1} e_3$.
Finally, computing the exponential for the sum of commuting elements of the Lie algebra, we get the product of exponentials as in Proposition~\ref{prop-geodesics}.
\end{remark}

\begin{remark}
\label{rem-go}
Also we can apply the geodesic lemma for Lorentzian manifolds~\cite[Lem.~2.1]{dusek-kowalski}.
It follows, that the $\SO_{1,1}$-symmetric left-invariant structures on the universal covering of the Lie group $\SL_2(\R)$ are geodesic orbit, i.e.,
any extremal trajectory is an orbit of a one-parametric group of isometries.
\end{remark}

Introduce the following notation:
\begin{equation}
\label{eq-notation}
\Kil{h} = -h_1^2 + h_2^2 + h_3^2, \qquad
|h| = \sqrt{|\Kil(h)|}, \qquad \bar{h}_i = \frac{h_i}{|h|}, \ i=1,2,3, \qquad
\tau = \frac{t|h|}{2I_1}.
\end{equation}
Now let us deduce equations of extremal trajectories in coordinates.
These equations can be of three different types depending on type of an initial covector $h$ with respect to the Killing form $\Kil$ transferred to $\g^*$.

\begin{proposition}
\label{prop-geodesics-in-coordinates}
An extremal trajectory with an initial covector $h \in \g^*$ has the following parametrization\emph{:}
$$
c = \arg(q_0 + iq_1), \qquad w = q_2 + iq_3,
$$
where $q_0, q_1, q_2, q_3$ are defined as follows.\\
\emph{(1)} If $\Kil{h} < 0$, then
\begin{equation}
\label{eq-hyperbolic-time-geodesic}
\begin{array}{ccl}
q_0^e(\tau) & = & \ct \ch{(\tau\mu\bar{h}_3)} + \hhh \st \sh{(\tau\mu\bar{h}_3)},\\
\left(
  \begin{array}{r}
     -q_1^e(\tau)\\
     q_2^e(\tau)\\
  \end{array}
\right) & = &  \sin{\tau} R^h_{\tau \mu \hhh}
\left(
  \begin{array}{l}
     \bar{h}_1\\
     \bar{h}_2\\
  \end{array}
\right),\\
q_3^e(\tau) & = & \ct \sh{(\tau\mu\bar{h}_3)} + \hhh \st \ch{(\tau\mu\bar{h}_3)}.\\
\end{array}
\end{equation}
\emph{(2)} If $\Kil{h} = 0$, then
\begin{equation}
\label{eq-hyperbolic-light-geodesic}
\begin{array}{ccl}
q_0^p(t) & = & \ch{\left(\frac{t\mu h_3}{2I_1}\right)} + \frac{t}{2 I_1} h_3 \sh{\left(\frac{t\mu h_3}{2I_1}\right)},\\
\left(
  \begin{array}{r}
     -q_1^p(t)\\
     q_2^p(t)\\
  \end{array}
\right) & = & \frac{t}{2 I_1} R^h_{\arglh}
\left(
  \begin{array}{l}
     h_1\\
     h_2\\
  \end{array}
\right),\\
q_3^p(t) & = & \sh{\left(\frac{t\mu h_3}{2I_1}\right)} + \frac{t}{2 I_1} h_3 \ch{\left(\frac{t\mu h_3}{2I_1}\right)}.\\
\end{array}
\end{equation}
\emph{(3)} If $\Kil{h} > 0$, then
\begin{equation}
\label{eq-hyperbolic-space-geodesic}
\begin{array}{ccl}
q_0^h(\tau) & = & \cht \ch{(\tau\mu\bar{h}_3)} + \hhh \sht \sh{(\tau\mu\bar{h}_3)},\\
\left(
  \begin{array}{r}
     -q_1^h(\tau )\\
     q_2^h(\tau )\\
  \end{array}
\right) & = & \sht R^h_{\tau \mu \hhh}
\left(
  \begin{array}{l}
     \bar{h}_1\\
     \bar{h}_2\\
  \end{array}
\right),\\
q_3^h(\tau) & = & \cht \sh{(\tau\mu\bar{h}_3)} + \hhh \sht \ch{(\tau\mu\bar{h}_3)}.\\
\end{array}
\end{equation}
\end{proposition}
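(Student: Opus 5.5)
We start from Proposition~\ref{prop-geodesics}, which writes $g(t)$ as the product $\exp(tX)\cdot\exp(s e_3)$. Here $X = \frac{1}{I_1}(-h_1e_1+h_2e_2+h_3e_3)$ and $s = \frac{t\mu h_3}{I_1}$. The plan is to compute each factor explicitly in the matrix model $\SU_{1,1}$ and multiply, and then to read off $(c,w)$ from the covering map.

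Write an element of $\SU_{1,1}$ as $\begin{pmatrix} z & w\\ \bar w & \bar z\end{pmatrix}$ with $z = q_0+iq_1$ and $w = q_2+iq_3$. The covering map gives $z = e^{ic}\sqrt{1+|w|^2}$, so $c = \arg(q_0+iq_1)$. This argument must be taken as the continuous branch along the curve starting from $c=0$ at $t=0$, which is legitimate because the curve is continuous in $G$.

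\textbf{The first factor.} Put $Y = -h_1e_1+h_2e_2+h_3e_3$. Its matrix is $\frac12\begin{pmatrix} -ih_1 & h_2+ih_3\\ h_2-ih_3 & ih_1\end{pmatrix}$, and a direct computation gives $Y^2 = \frac14(-h_1^2+h_2^2+h_3^2)\,\mathrm{Id} = \frac14\Kil(h)\,\mathrm{Id}$. Hence
$$\exp(tY/I_1) = \phi(\tau)\,\mathrm{Id} + \psi(\tau)\,\frac{2}{|h|}Y,$$
with three cases according to the sign of $\Kil h$:
\begin{itemize}
\item if $\Kil h<0$, then $(\phi,\psi) = (\cos\tau,\sin\tau)$;
\item if $\Kil h>0$, then $(\phi,\psi) = (\cosh\tau,\sinh\tau)$;
\item if $\Kil h = 0$, the nilpotent case, then $\exp(tY/I_1) = \mathrm{Id}+\frac{t}{I_1}Y$.
\end{itemize}
Here $\tau = t|h|/(2I_1)$ as in~\eqref{eq-notation}. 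In normalized components, the first factor has $z$-entry $\phi - i\psi\bar h_1$ and $w$-entry $\psi(\bar h_2+i\bar h_3)$. In the light-like case the same holds with $\bar h_i\psi$ replaced by $\frac{t}{2I_1}h_i$.

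\textbf{The second factor.} The matrix of $e_3$ is $\frac12\begin{pmatrix}0&i\\-i&0\end{pmatrix}$, whose square is $\frac14\mathrm{Id}$. Therefore $\exp(se_3) = \cosh(s/2)\,\mathrm{Id} + 2\sinh(s/2)\,e_3$. Note that $s/2 = \frac{t\mu h_3}{2I_1} = \tau\mu\bar h_3$, which matches both the hyperbolic angle appearing in the statement and the angle $\arglh$ in the light-like case. This factor has $z$-entry $\cosh(s/2)$ and $w$-entry $i\sinh(s/2)$.

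\textbf{The product.} Multiply the two $2\times2$ matrices. The new $z$ is $z_1\cosh(s/2) + w_1\cdot(-i\sinh(s/2))$, and the new $w$ is $z_1\, i\sinh(s/2) + w_1\cosh(s/2)$. Separating real and imaginary parts:
\begin{itemize}
\item $q_0 = \phi\cosh + \psi\bar h_3\sinh$;
\item $q_3 = \phi\sinh + \psi\bar h_3\cosh$;
\item $q_1 = -\psi(\bar h_1\cosh + \bar h_2\sinh)$, i.e.\ $-q_1 = \psi(\bar h_1\cosh+\bar h_2\sinh)$;
\item $q_2 = \psi(\bar h_2\cosh + \bar h_1\sinh)$.
\end{itemize}
The pair $(-q_1,q_2)$ is exactly $\psi\,R^h_{\tau\mu\bar h_3}(\bar h_1,\bar h_2)^T$, which gives formulas~\eqref{eq-hyperbolic-time-geodesic} and~\eqref{eq-hyperbolic-space-geodesic}. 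The light-like case~\eqref{eq-hyperbolic-light-geodesic} follows in the same way with $\phi=1$ and $\psi\bar h_i = \frac{t}{2I_1}h_i$.

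The main obstacle is purely bookkeeping: getting the signs in the matrix of $Y$ and in the product right, so that the rotation $R^h$ acts on $(\bar h_1,\bar h_2)$ with the stated signs. A secondary point is justifying that $c$ is the continuous lift of $\arg z$. For this I note that the product in $G$ corresponds to continuous lifting of the product in $\SU_{1,1}$, so no separate use of the multiplication law~\eqref{eq-mult} is needed.
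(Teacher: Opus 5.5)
Your proposal is correct and follows the paper's route. Like the paper, you compute the two exponential factors of Proposition~\ref{prop-geodesics} in closed form (your expressions agree with \eqref{eq-group-exp}) and multiply them; I checked the signs, and the product reproduces \eqref{eq-hyperbolic-time-geodesic}--\eqref{eq-hyperbolic-space-geodesic} exactly. Taking $c$ as the continuous branch of $\arg(q_0+iq_1)$ is a valid way to lift to $G$ instead of invoking \eqref{eq-mult}, since $|q_0+iq_1|^2 = 1+|w|^2 \geqslant 1$ guarantees the branch exists.
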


\begin{proof}
The proof is a direct computation of the product of two exponentials from Proposition~\ref{prop-geodesics}, using the formula
$\exp(x) = (\arg(q_0 + iq_1), q_2+iq_3)$, where
\begin{equation}
\label{eq-group-exp}
\begin{array}{llll}
q_0 + i q_1 = \cos{(\frac{|x|}{2})} + i \bar{x}_1 \sin{(\frac{|x|}{2})}, & q_2 + i q_3 = \sin{(\frac{|x|}{2})} (\bar{x}_2  + i \bar{x}_3) & \text{for} & \Kil(x) < 0,\\
q_0 + i q_1 = 1 + i \frac{x_1}{2}, & q_2 + i q_3 = \frac{1}{2}(x_2  + i x_3) & \text{for} & \Kil(x) = 0,\\
q_0 + i q_1 = \cosh{(\frac{|x|}{2})} + i \bar{x}_1 \sinh{(\frac{|x|}{2})}, & q_2 + i q_3 =  \sinh{(\frac{|x|}{2})} (\bar{x}_2 + i \bar{x}_3) & \text{for} & \Kil(x) > 0,\\
\end{array}
\end{equation}
and $x = x_1e_1 + x_2e_2 + x_3e_3 \in \g$.
\end{proof}

\begin{lemma}
\label{lem-light}
For initial covectors of light-like extremal trajectories we have $\bar{h}_3 = \pm \frac{1}{\sqrt{|\mu|}}$ or $\Kil{(h)} = 0$ and $\bar{h}_3 = 0$.
\end{lemma}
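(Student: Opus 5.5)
Light-like extremal trajectories are exactly the abnormal ones, so by part~(3) of the first proposition their initial covectors lie on the cone $H = 0$, $h_1 < 0$. The plan is to rewrite the condition $H(h) = 0$ in terms of the Killing form $\Kil$ and the parameter $\mu$, and then to normalize by $|h|$ as in~\eqref{eq-notation}.

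Since $\frac{1}{I_3} = \frac{1+\mu}{I_1}$, we have
$$
2I_1 H(h) = -h_1^2 + h_2^2 + (1+\mu)h_3^2 = \Kil(h) + \mu h_3^2 .
$$
So the light-like condition $H(h)=0$ is equivalent to $\Kil(h) = -\mu h_3^2$. We now split into cases according to the sign of $\Kil(h)$, which is also the case split of Proposition~\ref{prop-geodesics-in-coordinates}.

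\emph{Case $\Kil(h) \neq 0$.} Here $|h| > 0$. Dividing the equation by $|h|^2 = |\Kil(h)|$ gives $\sgn(\Kil(h)) = -\mu \hhh^2$. Hence $\mu \hhh^2 = \pm 1$, and since $\hhh^2 \geqslant 0$ the sign is forced: $\sgn(\Kil(h)) = -\sgn(\mu)$. In either case $\hhh^2 = \frac{1}{|\mu|}$, that is, $\hhh = \pm\frac{1}{\sqrt{|\mu|}}$. As a side remark, this shows that in the oblate case $\mu<0$ one needs $\Kil(h)>0$, while in the prolate case one needs $\Kil(h)<0$.

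\emph{Case $\Kil(h) = 0$.} The equation becomes $\mu h_3^2 = 0$. Since the symmetric case $\mu = 0$ is excluded in this paper, we get $h_3 = 0$. The normalized quantity $\hhh$ is not defined here because $|h|=0$, so the statement ``$\hhh = 0$'' should be read as $h_3 = 0$, equivalently as the limiting value. This is the only point requiring care; with that convention the case is immediate.

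Combining the two cases gives exactly the dichotomy claimed in the lemma. There is no real obstacle: the whole argument is the identity $2I_1H = \Kil + \mu h_3^2$ together with the standing assumption $\mu \neq 0$.
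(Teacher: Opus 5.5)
Your proof is correct and follows the paper's argument: both normalize the light-like condition $H(h)=0$ by $|h|^2$, compare it with $\bar h_1^2-\bar h_2^2-\bar h_3^2=-\sgn\Kil(h)$ to get $\bar h_3^2=1/|\mu|$, and note that $h_3=0$ is forced when $\Kil(h)=0$. Writing it as $2I_1H=\Kil(h)+\mu h_3^2$ is slightly cleaner, because it derives $\sgn\Kil(h)=-\sgn\mu$ instead of taking it from the picture of the cones as the paper implicitly does; your reading of ``$\bar h_3=0$'' as $h_3=0$ when $|h|=0$ is the intended one.
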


\begin{proof}
It follows from~Proposition~\ref{prop-geodesics}\,(3) that for an initial covector of a light-like extremal trajectory
$$
\frac{h_1^2}{I_1} - \frac{h_2^2}{I_1} - \frac{h_3^2}{I_3} = 0 \qquad \Rightarrow \qquad
\bar{h}_1^2 - \bar{h}_2^2 - (\mu + 1)\bar{h}_3^2 = 0 \quad \text{for} \quad \Kil{(h)} \neq 0.
$$
Assume that $\Kil{(h)} \neq 0$.
Since $\bar{h}_1^2 - \bar{h}_2^2 - \bar{h}_3^2 = -1$ for $\mu < 0$ and $\bar{h}_1^2 - \bar{h}_2^2 - \bar{h}_3^2 = 1$ for $\mu > 0$, then for a light-like extremal trajectory $\bar{h}_3^2 = \frac{1}{|\mu|}$.
If $\Kil{(h)} = 0$, then $\bar{h}_3 = 0$.
\end{proof}

\begin{lemma}
\label{lem-h3}
\emph{(1)} If $\mu < 0$, then for extremal trajectories of type~\emph{\eqref{eq-hyperbolic-time-geodesic}} we have any $\bar{h}_3 \in \R$ and
for extremal trajectories of type~\emph{\eqref{eq-hyperbolic-space-geodesic}} we have $\bar{h}_3 \in (-\infty,-\frac{1}{\sqrt{-\mu}}) \cup (\frac{1}{\sqrt{-\mu}}, +\infty)$.\\
\emph{(2)} If $\mu > 0$, then almost all extremal trajectories have the type~\emph{\eqref{eq-hyperbolic-time-geodesic}} and
the corresponding $\bar{h}_3 \in [-\frac{1}{\sqrt{\mu}}, \frac{1}{\sqrt{\mu}}]$.
There are two extremal trajectories of type~\emph{\eqref{eq-hyperbolic-light-geodesic}} with $\bar{h}_3 = 0$.
\end{lemma}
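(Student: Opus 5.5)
The plan is to parametrize the admissible initial covectors and compare the Killing form with the Hamiltonian. By Proposition~1\,(2),(3), the initial covectors of extremals satisfy $-\frac{h_1^2}{I_1} + \frac{h_2^2}{I_1} + \frac{h_3^2}{I_3} \le 0$ and $h_1<0$; the value $-1$ (after scaling by $2$) gives the normal case and $0$ the abnormal case. Multiplying by $I_1$ and using $\frac{I_1}{I_3} = \mu+1$, this condition reads
$$
-h_1^2 + h_2^2 + (\mu+1)h_3^2 \le 0, \qquad\text{that is,}\qquad \Kil(h) \le -\mu h_3^2 .
$$
The whole lemma comes from comparing the sign of $\Kil(h)$ with this inequality. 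For $\Kil(h)\neq 0$, dividing by $|h|^2$ gives $\bar h_1^2-\bar h_2^2-(\mu+1)\bar h_3^2 \ge 0$. Moreover $\bar h_1^2-\bar h_2^2-\bar h_3^2 = -\sgn\Kil(h)$, so the condition becomes $-\sgn\Kil(h) - \mu\bar h_3^2 \ge 0$.

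\textbf{Case $\mu<0$.} If $\Kil(h)<0$, the condition is $1+|\mu|\bar h_3^2\ge 0$. This always holds, so any $\bar h_3\in\R$ occurs. To confirm this, take $h_2=0$ and $h_1$ large negative with $h_3$ arbitrary; then $\Kil<0$ and $H<0$. If $\Kil(h)>0$, the condition is $|\mu|\bar h_3^2 \ge 1$, so $|\bar h_3|\ge 1/\sqrt{-\mu}$. Equality is the light-like case of Lemma~\ref{lem-light}, so for time-like extremals the inequality is strict. I read the statement of part (1) as referring to the time-like (normal) trajectories; for light-like ones the boundary values $\pm 1/\sqrt{-\mu}$ are those given by Lemma~\ref{lem-light}. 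Realizability is checked the same way: choose $h_3$ large and then adjust $h_1,h_2$.

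\textbf{Case $\mu>0$.} Now $\Kil(h)\le -\mu h_3^2\le 0$, so type~\eqref{eq-hyperbolic-space-geodesic} never occurs. If $\Kil(h)<0$, the condition is $1-\mu\bar h_3^2\ge 0$, i.e. $|\bar h_3|\le 1/\sqrt\mu$, and every such value is attained. The case $\Kil(h)=0$ forces $0\le -\mu h_3^2$, hence $h_3=0$ and $H=0$. This leaves $h_1^2=h_2^2$ with $h_1<0$, i.e. $h_2=\pm h_1$. After normalization (the abnormal cone is defined only up to positive scaling), these are exactly two rays, giving two light-like extremal trajectories of type~\eqref{eq-hyperbolic-light-geodesic} with $h_3=0$.

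\textbf{Remaining point.} The step needing the most care is the phrase ``almost all''. I would argue that the set $\{\Kil=0\}$ meets the admissible set only in the two rays above. Those rays have measure zero and do not meet the normal hyperboloid $H=-\tfrac12$, so every normal extremal is of type~\eqref{eq-hyperbolic-time-geodesic}. Besides this, the proof only needs the sign bookkeeping in the identity $\bar h_1^2-\bar h_2^2-\bar h_3^2=-\sgn\Kil(h)$.
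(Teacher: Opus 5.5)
Your proof is correct and is essentially the paper's argument. The paper reads the lemma off Fig.~\ref{fig-initial-covectors}, which shows where the level sets $H=-\frac{1}{2}$ and $H=0$ sit relative to $\Kil(h)=0,\pm 1$, combined with Lemma~\ref{lem-light}; your identity $2I_1H(h)=\Kil(h)+\mu h_3^2$ is exactly the algebraic content of that picture. Your remark that the open intervals in part~(1) concern time-like trajectories is also right, since light-like ones with $\Kil(h)>0$ have exactly $\bar{h}_3=\pm\frac{1}{\sqrt{-\mu}}$.
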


\begin{figure}
\centering{
\minipage{0.45\textwidth}
  \centering{\includegraphics[width=0.7\linewidth]{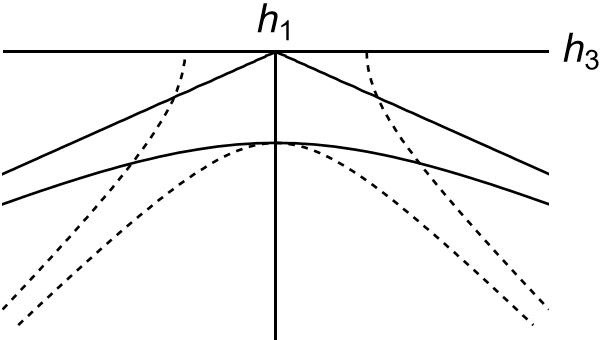} \\ $\mu < 0$}
\endminipage
\hfil
\minipage{0.45\textwidth}
  \centering{\includegraphics[width=0.7\linewidth]{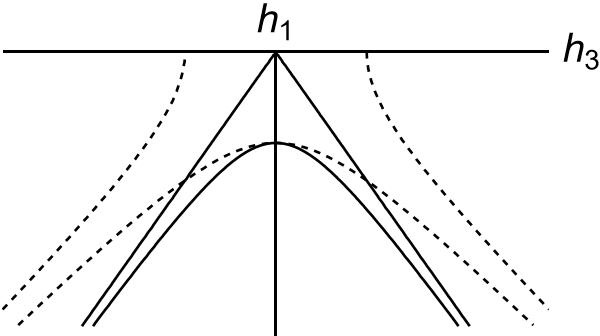} \\ $\mu > 0$}
\endminipage}
\caption{\label{fig-initial-covectors}Initial covectors for different types of extremal trajectories. The section $h_2 = 0$.
Solid lines are corresponding to the hyperboloid $H(h) = -\frac{1}{2}$ and the cone $H(h) = 0$.
Dashed lines are corresponding to the hyperboloids $|h| = 1$, or, equivalently, $\Kil{(h)} = \pm 1$.}
\end{figure}

\begin{proof}
Follows from Fig.~\ref{fig-initial-covectors} and Lemma~\ref{lem-light}.
\end{proof}

\section{\label{sec-oblate}The oblate case}

\subsection{\label{sec-oblate-atset}The attainable set}

\begin{proposition}
\label{prop-oblate-atset}
If $\mu < 0$, then the attainable set is
$$
\begin{array}{ll}
\A = & \left\{ (c,w) \in G \, \Bigm| \, |\tan{c}| \geqslant \sqrt{\frac{(\Real{w})^2 - \frac{\mu+1}{\mu}\sh^2{\tau}}{1 + (\Image{w})^2 + \frac{\mu+1}{\mu}\sh^2{\tau}}},
\ \text{where} \ \right.\\
& \left. -\ch{\tau} \sh{(\tau\sqrt{-\mu})} + \frac{1}{\sqrt{-\mu}} \sh{\tau} \ch{(\tau\sqrt{-\mu})} = |\Image{w}| \right\}. \\
\end{array}
$$
\end{proposition}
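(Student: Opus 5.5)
The plan is to show that the boundary of $\A$ is swept out by light-like extremal trajectories, i.e.\ those with $\bar h_3=\pm\frac{1}{\sqrt{-\mu}}$ (Lemma~\ref{lem-light}), and that $\A$ is the closed region above this boundary.

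\textbf{Step 1: the candidate boundary.} For $\mu<0$ the light-like extremals have $\Kil(h)>0$ (Lemma~\ref{lem-h3}\,(1) places $|\bar h_3|>\frac{1}{\sqrt{-\mu}}$ on the space-like side, and the light-like ones sit at the endpoint). So they are of type~\eqref{eq-hyperbolic-space-geodesic} with $\bar h_3=\pm\frac{1}{\sqrt{-\mu}}$, hence $\tau\mu\bar h_3=\mp\tau\sqrt{-\mu}$. Substituting into~\eqref{eq-hyperbolic-space-geodesic} gives
\[
q_3=\mp\bigl(\cht\sh(\tau\sqrt{-\mu})-\tfrac{1}{\sqrt{-\mu}}\sht\ch(\tau\sqrt{-\mu})\bigr)=\Image w ,
\]
which is exactly the relation between $\tau$ and $|\Image w|$ in the statement. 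The remaining free parameter is the hyperbolic angle of $(\bar h_1,\bar h_2)$, which satisfies $\bar h_1^2-\bar h_2^2=1+\bar h_3^2=\frac{\mu-1}{\mu}\cdot(-1)\cdots$; more precisely, from $\bar h_1^2-\bar h_2^2=(\mu+1)\bar h_3^2=-\frac{\mu+1}{\mu}$. Since $R^h$ preserves this form, we get
\[
q_1^2-q_2^2=-\tfrac{\mu+1}{\mu}\sh^2\tau .
\]
We also use $q_0^2+q_1^2-q_2^2-q_3^2=1$. Eliminating the angle yields
\[
\tan^2 c=\frac{q_1^2}{q_0^2}=\frac{(\Real w)^2-\frac{\mu+1}{\mu}\sh^2\tau}{1+(\Image w)^2+\frac{\mu+1}{\mu}\sh^2\tau}.
\]
This is the equality case of the inequality in the statement, so the light-like extremal trajectories fill the surface $|\tan c|=(\dots)$. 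I will check the sign $q_0>0$, and check that $c$ stays in $(-\pi/2,\pi/2)$ on these curves, so that the formula for $c$ as an arctangent holds.

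\textbf{Step 2: $\A$ lies above the surface.} I want to show that this surface bounds a region invariant under admissible motion. The plan is to use the standard control-theoretic fact (geometric control, \cite{agrachev-sachkov}): the boundary of the attainable set of a system with a convex cone of controls consists of trajectories satisfying the Pontryagin maximum principle with $\nu=0$, i.e.\ abnormal, light-like extremals. Concretely:
\begin{enumerate}
\item Since the structure is left-invariant and $\SO_{1,1}$-symmetric, $\A$ is a closed semigroup invariant under conjugation by $\SO_{1,1}$.
\item Its boundary must be contained in the union of light-like extremal trajectories starting at $e$.
\item To exclude the part of that union that is not on the boundary, I verify that the surface from Step 1 is a graph $c=F(w)$, with $F\ge 0$ and $F(0)=0$. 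I then check that at each surface point the future cone $L_{x*}C$ lies on the side $c\ge F(w)$, i.e.\ the surface is tangent to the cone along light-like directions.
\end{enumerate}
This gives $\A\subset\{c\ge F(w)\}$ by a comparison argument along admissible curves.

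\textbf{Step 3: every point above the surface is attained.} Points with $c\ge F(w)$ are reached by concatenating a light-like boundary trajectory with the vertical time-like one-parameter subgroup $\exp(te_1)$, which increases $c$ while keeping $w$ essentially controlled. Alternatively they are reached by time-like extremals of type~\eqref{eq-hyperbolic-time-geodesic}, using continuity and a degree argument in $(\bar h_1,\bar h_2,\bar h_3,\tau)$. Either way, the region strictly above the surface is contained in $\A$, and closedness gives the boundary.

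\textbf{Main obstacle.} The hardest step is Step 2, showing rigorously that the light-like surface is a genuine barrier. This requires the surface to be smooth and a graph, in particular monotonicity of $|\Image w|$ in $\tau$; differentiating gives
\[
\bigl(\tfrac{1}{\sqrt{-\mu}}-\sqrt{-\mu}\bigr)\sht\sh(\tau\sqrt{-\mu})>0
\]
for $-1<\mu<0$. It also requires a check that no light-like extremal crosses back into the interior. I would first try the direct tangency computation via the left-invariant vector fields, rather than general boundary theorems.
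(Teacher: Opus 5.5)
Your route differs from the paper's and is viable, but one check you plan in Step~1 is false. The paper never builds a barrier. The geometric Pontryagin principle puts $\partial\A$ inside the surface $\Sigma$ swept by light-like trajectories. The inclusion $C\subset C_0$ gives $\A\subset\A_0=\{c\geqslant\arctan|w|\}$, quoted from Grong--Vassil'ev, so each vertical section $\A_{w_0}$ is bounded below. Its infimum is then a boundary point, so it must be the unique point of $\Sigma$ over $w_0$; uniqueness comes from the monotonicity of $q_3$ in $\tau$. This forces all of $\Sigma$ into $\partial\A$. That argument is short, but it relies on the external description of $\A_0$ and leaves the inclusion $\{c\geqslant F(w)\}\subset\A$ implicit. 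Your invariance argument plus explicit reachability proves both inclusions directly. The price is the cone-side condition along $\Sigma$ and some care at the vertex $e$. You need not compute that condition by hand:
\begin{itemize}
\item the covector $\lambda(t)$ of a light-like extremal annihilates the tangent plane of $\Sigma$ (pull back the Liouville form by $e^{t\vec H}$ and restrict to vertical vectors tangent to $\{H=0\}$);
\item the maximum condition with $\nu=0$ gives $\langle\lambda(t),v\rangle\leqslant 0$ on the whole admissible cone;
\item a Nagumo-type invariance theorem then yields $\A\subset\{c\geqslant F(w)\}$.
\end{itemize}

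The false step is the claim that $q_0$ stays positive. With $a=\sqrt{-\mu}\in(0,1)$ and $\bar h_3=\pm 1/a$ one has $q_0(\tau)=\cosh\tau\cosh(a\tau)-\frac{1}{a}\sinh\tau\sinh(a\tau)$. Its derivative is $q_0'(\tau)=-\frac{1+\mu}{\sqrt{-\mu}}\cosh\tau\sinh(a\tau)<0$, and $q_0\to-\infty$. So $c$ crosses $\pi/2$ on every such trajectory, at the same $\tau$, and for large $|\Image w|$ the boundary lies in $\{\pi/2<c<\pi\}$. What does hold is $q_1>0$ for $\tau>0$, because hyperbolic rotations preserve the cone $\bar h_1<-|\bar h_2|$. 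Hence $F(w)=\arg(q_0+iq_1)\in[0,\pi)$. The $|\tan c|$ inequality in the statement only describes the boundary surface, and the set itself should be read as $\{c\geqslant F(w)\}$ with this branch; this matches the paper's remark on the sign of $q_0$. With the arctangent branch you would get the wrong surface, and the barrier check would fail there.

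Smaller repairs:
\begin{itemize}
\item The derivative is $q_3'(\tau)=\frac{1+\mu}{\sqrt{-\mu}}\cosh\tau\cosh(a\tau)$, not a $\sinh\cdot\sinh$ product; your conclusion still holds.
\item The part of $\Sigma$ over $\Image w=0$ comes from the light-like trajectories with $\Kil(h)=0$, $h_3=0$. You must include these to get a graph over all of $\C$.
\item In Step~3, $(c,w)\cdot\exp(te_1)=(c+t/2,\,we^{-it/2})$, so $w$ rotates. To reach $(c_1,w_1)$ with $c_1>F(w_1)$ and $|w_1|=r>0$, start from $(F(re^{i\phi}),re^{i\phi})\in\Sigma$, where $\phi$ solves $\phi-\arg w_1=c_1-F(re^{i\phi})$. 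A solution $\phi>\arg w_1$ exists by the intermediate value theorem: the difference of the two sides is negative at $\phi=\arg w_1$ and tends to $+\infty$ because $F\geqslant 0$.
\end{itemize}
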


\begin{proof}
Let us describe the surface swept by the light-like extremal trajectories.
It follows from~Lemma~\ref{lem-light} that for almost all light-like extremal trajectories $\bar{h}_3 = \pm \frac{1}{\sqrt{-\mu}}$.
This implies that $\bar{h}_1^2 - \bar{h}_2^2 = -1 + \bar{h}_3^2 = \frac{\mu+1}{-\mu}$.
Hence, for coordinates of the light-like extremal trajectories with the initial covector $h$ from equations~\eqref{eq-hyperbolic-space-geodesic} we have
$$
q_2^2(\tau) - q_1^2(\tau) = \frac{\mu+1}{\mu}\sh^2{\tau}.
$$
Since $q_2(\tau) = \Real{w}$, $q_3(\tau) = \Image{w}$, we obtain
$$
\tan^2{c(\tau)} = \frac{q_1^2(\tau)}{q_0^2(\tau)} = \frac{q_2^2(\tau) - (q_2^2(\tau) - q_1^2(\tau))}{1 + q_3^2(\tau) + q_2^2(\tau) - q_1^2(\tau)} =
\frac{(\Real{w})^2 - \frac{\mu+1}{\mu}\sh^2{\tau}}{1 + (\Image{w})^2 + \frac{\mu+1}{\mu}\sh^2{\tau}}.
$$
It remains to determine the value $\tau$ depending on $w$ from the last equation of~\eqref{eq-hyperbolic-space-geodesic}
$$
q_3(\tau) = \ch{\tau} \sh{(\tau\mu\bar{h}_3)} + \bar{h}_3 \sh{\tau} \ch{(\tau\mu\bar{h}_3)} =
$$
$$
= \mp \ch{\tau} \sh{(\tau\sqrt{-\mu})} \pm \frac{1}{\sqrt{-\mu}} \sh{\tau} \ch{(\tau\sqrt{-\mu})} = \Image{w},
$$
since $\bar{h}_3 = \pm \frac{1}{\sqrt{-\mu}}$.
Let us show that for fixed $w$ there is the unique $\tau$.
Indeed, for  $\bar{h}_3 = \frac{1}{\sqrt{-\mu}}$ the function $q_3(\tau) = - \ch{\tau} \sh{(\tau\sqrt{-\mu})} + \frac{1}{\sqrt{-\mu}} \sh{\tau} \ch{(\tau\sqrt{-\mu})}$ increases, since for its derivative we have
$$
\begin{array}{ll}
q_3'(\tau) = & - \sh{\tau} \sh{(\tau\sqrt{-\mu})} - \sqrt{-\mu} \ch{\tau} \ch{(\tau\sqrt{-\mu})} + \\
 & + \frac{1}{\sqrt{-\mu}} \ch{\tau} \ch{(\tau\sqrt{-\mu})} + \sh{\tau} \sh{(\tau\sqrt{-\mu})} = \\
 & = \frac{\mu+1}{\sqrt{-\mu}} \ch{\tau} \ch{(\tau\sqrt{-\mu})} > 0, \\
\end{array}
$$
because $\mu > -1$. So, $\tau$ is the unique solution of the equation $q_3(\tau) = \Image{w}$.
Note that $q_1(\tau) > 0$. So, the sign of $\tan{c}$ depends on the sign of $q_0(\tau)$.

It follows from the geometrical statement of the Pontryagin maximum principle~\cite[Th.~12.1]{agrachev-sachkov} that
if a point belongs to the boundary of the attainable set, then there exists an abnormal extremal trajectory coming to this point.
Abnormal extremal trajectories are light-like extremal trajectories for the Lorentzian problem.

Let us prove that the light-like extremal trajectories sweep the boundary $\partial\A$ of the attainable set.
Notice that since $\mu < 0$ the control set $C$ is contained in the cone $C_0 = \{u_1e_1 + u_2e_2 + u_3e_3 \in \g \, | \, u_1^2 - u_2^2 - u_3^2 \leqslant 0\}$
that corresponds to the Killing form (or, equivalently, to the symmetric case $\mu = 0$).
This implies that the attainable set for the control set $C$ is contained in the attainable set for the control set $C_0$.
The last one is well known~\cite[Prop.~7\,(a)]{grong-vasiliev}
$$
\A_0 = \{ (c,w) \in G \, | \, c \geqslant \arctan{|w|} \}.
$$
In particular, this set is bounded from below with respect to the variable $c$, as well as our attainable set $\A \subset \A_0$.
Now assume by contradiction that there is a point $(c_0, w_0)$ on a light-like extremal trajectory that does not belong to $\partial\A$.
This means that the point $(c_0, w_0)$ lies in the interior of the vertical section of the attainable set $\A_{w_0} = \{(c,w) \in \A \, | \, w = w_0 \}$.
Since the set $\A_{w_0}$ is bounded from below, then $\inf{\A_{w_0}} > -\infty$.
Thus, $(\inf{\A_{w_0}}, w_0) \in \partial\A$. But there are no light-like trajectories coming to this point.
We get a contradiction.
\end{proof}

\subsection{\label{sec-oblate-conj}Conjugate points}

Recall some definitions.

\begin{definition}
\label{def-exp-map}
\emph{The exponential map} for an optimal control problem is the map
$$
\Exp : \g^* \times \R_+ \rightarrow G, \qquad \Exp(h, t) = \pi \circ e^{t\vec{H}}(h), \qquad h \in \g^*, \ t \in \R_+,
$$
where $e^{t\vec{H}}$ is the flow of the Hamiltonian vector field $\vec{H}$ corresponding to the Hamiltonian $H$ and
$\pi : T^*G \rightarrow G$ is the natural projection.
So, any extremal trajectory can be written in a form $\Gamma_h = \{\Exp(h,t) \, | \, t \in \R_+\}$ for some $h \in \g^*$ such that $h_1 < 0$ and $H(h) = -\frac{1}{2}$ or $H(h) = 0$.
\end{definition}

\begin{definition}
\label{def-conj}
A time $t \in \R_+$ is called \emph{a conjugate time along an extremal trajectory $\Gamma_h$ with an initial covector $h \in \g^*$} if the point $(h,t)$ is a critical point of the exponential map. The corresponding point $\Exp(h,t)$ is called \emph{a conjugate point}.
\end{definition}

We are interested in \emph{the first conjugate time} which we denote by $\tconj(h) \in \R_+ \cup \{+\infty\}$. The set of first conjugate points for all extremal trajectories we denote by $\Conj$ and call it \emph{the first caustic}.

\begin{theorem}
\label{th-oblate-conj}
If $\mu < 0$, then the first conjugate time equals
$$
\tconj(h) = \left\{
\begin{array}{lcl}
\frac{2\pi I_1}{|h|}, & \text{if} & \Kil{(h)} < 0,\\
+\infty, & \text{if} & \Kil{(h)} \geqslant 0.\\
\end{array}
\right.
$$
The first caustic is $\Conj = \{(\pi,w) \, | \, w \in i\R\}$.
\end{theorem}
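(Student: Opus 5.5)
The plan is to compute the Jacobian of the exponential map directly, using the symmetries to cut the number of variables. By Proposition~\ref{prop-geodesics} every extremal trajectory is $\exp(t\,d_hH)\exp(-tZ)$ with $Z = -\frac{\mu h_3}{I_1}e_3$. The $\SO_{1,1}$-action $\Ad^*\exp(se_3)$ on covectors corresponds to conjugation on $G$ by isometries (Remark~\ref{rem-isometry-group}). So the exponential map is equivariant, and I may restrict to covectors with $h_2=0$ (equivalently, $\bar h_1=-\sqrt{1+\bar h_3^2}$ in the time-like case). The remaining variables are $\bar h_3$, the $\SO_{1,1}$-parameter $s$ and $t$ (or $\tau$). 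On the hyperboloid $H=-\frac12$ the derivative in $s$ gives the vector field $\frac{d}{ds}\bigl(\exp(se_3)\,g\,\exp(-se_3)\bigr)$. I would therefore write the Jacobian as a $3\times3$ determinant in the coordinates $(q_1,q_2,q_3)$ with columns $\partial_\tau$, $\partial_{\bar h_3}$ and the infinitesimal conjugation $[e_3,\cdot]$. Here the parametrization~\eqref{eq-hyperbolic-time-geodesic} (for $\Kil(h)<0$) or~\eqref{eq-hyperbolic-space-geodesic} (for $\Kil(h)>0$) is used, and $q_0$ is determined by $q_0^2+q_1^2-q_2^2-q_3^2=1$.

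In the time-like case $\Kil(h)<0$ the structure of the formulas suggests that the determinant factorizes as $\sin\tau$ times a function that does not vanish on $(0,\pi)$. Indeed $q_1,q_2$ carry the common factor $\sin\tau$. At $\tau=\pi$ we get $q_1=q_2=0$ and $q_0+iq_3 = -(\cosh(\pi\mu\bar h_3)+i\sinh(\pi\mu\bar h_3))$, so $c=\pi$ and $w=-i\sinh(\pi\mu\bar h_3)\in i\R$. Thus the whole one-parameter family with fixed $\bar h_3$ (varying $s$) collapses to one point, which already shows $\tau=\pi$ is conjugate. In terms of $t$ this is $t=\frac{2\pi I_1}{|h|}$. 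Since $\bar h_3$ ranges over all of $\R$ by Lemma~\ref{lem-h3}, these points fill $\{(\pi,w)\mid w\in i\R\}$. The main obstacle is proving that there are no earlier zeros, i.e. that the cofactor is nonzero for $\tau\in(0,\pi)$. I would first compute the $2\times2$ block of derivatives in $\tau$ and $\bar h_3$ of $(q_0,q_3)$ modulo $\sin\tau$. I would then try to show that the cofactor is a combination of the form $A(\tau)\cosh+B(\tau)\sinh$ of $\tau\mu\bar h_3$ with $A$ of fixed sign, as in the monotonicity argument in Proposition~\ref{prop-oblate-atset}, using $-1<\mu<0$. If that fails, an alternative is to restrict first to the invariant plane $\Real w=0$ (the $\SO_{1,1}$-factor of Remark~\ref{rem-graph}), where the problem essentially becomes two-dimensional. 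In that plane I would show that $\tau\mapsto(c,\Image w)$ and $\bar h_3\mapsto(c,\Image w)$ give a local diffeomorphism for $\tau<\pi$, via a sign argument on the derivative of $\tan c$ in $\tau$.

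For $\Kil(h)>0$ the same Jacobian is computed with $\sinh\tau$ in place of $\sin\tau$. Here $\bar h_3^2>\frac1{-\mu}$ by Lemma~\ref{lem-h3}, and the monotonicity of $q_3$ in $\tau$, as in the derivative computation of Proposition~\ref{prop-oblate-atset} (now with $|\mu\bar h_3|>\sqrt{-\mu}$ but the same sign structure), should show that the determinant is nonzero for all $\tau>0$, so $\tconj=+\infty$. The light-like/parabolic cases $\Kil(h)=0$ or $H=0$ lie on the boundary of the attainable set, since they sweep $\partial\A$ by Proposition~\ref{prop-oblate-atset}. For them I would argue either by the same determinant with~\eqref{eq-hyperbolic-light-geodesic}, or by noting that the exponential map restricted to the cone $H=0$ is injective onto $\partial\A$ with a nondegenerate differential. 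Combining the cases gives the formula for $\tconj$ and the caustic.
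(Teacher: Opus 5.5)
\paragraph{There is a genuine gap: the decisive estimate is not proved.} Your reduction is sound and is in substance the paper's. $q_0$ and $q_3$ are $\SO_{1,1}$-invariants that depend only on $(\tau,\bar h_3)$. The orbit direction degenerates exactly where $q_1=q_2=0$, i.e.\ at $\sin\tau=0$, which is the paper's first factor. Your collapse argument correctly shows that $\tau=\pi$ is conjugate and yields the caustic.

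A technical caveat: $(q_1,q_2,q_3)$ is not a chart where $q_0=0$. Since $q_0$ vanishes on every time-like extremal at some $\tau\in(0,\frac{\pi}{2}]$, your $3\times3$ minor would pick up spurious zeros. Use $q_0$, $q_3$ and the hyperbolic angle of $(q_1,q_2)$ instead.

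The real content of the theorem is that the reduced determinant $\det\frac{\partial(q_0,q_3)}{\partial(\tau,\bar h_3)}$ has no zero in $(0,\pi)$ when $\Kil(h)<0$ and none at all when $\Kil(h)>0$. You only hope for this, and the shape you guess for it is wrong. The functions $\cosh(\tau\mu\bar h_3)$ and $\sinh(\tau\mu\bar h_3)$ cancel identically through $\cosh^2-\sinh^2=1$. In the paper's notation the determinant is $s(\tau,h)\bigl[\tau\mu(\delta(h)-\bar h_3^2)c(\tau,h)+(\delta(h)+\mu\bar h_3^2)s(\tau,h)\bigr]$.

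The key step is then a slope comparison that uses $-1<\mu<0$:
\begin{itemize}
\item For $\Kil(h)<0$, the bracket vanishes iff $\tan\tau=k\tau$ with $0<k<1$. Its first positive root lies in $(\pi,\frac{3\pi}{2})$.
\item For $\Kil(h)>0$, you need $\bar h_3^2>-\frac{1}{\mu}$ from Lemma~\ref{lem-h3}. This gives $\tanh\tau=k\tau$ with $k>1$, which has no positive root.
\end{itemize}
Your substitute for $\Kil(h)>0$, monotonicity of $q_3$ in $\tau$, cannot replace this. One nonvanishing partial derivative says nothing about a $2\times2$ determinant that also involves the $\bar h_3$-direction.

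\paragraph{The case $\Kil(h)=0$ is also mishandled.} You identify it with the light-like case, but $2I_1H(h)=\Kil(h)+\mu h_3^2$. So for $\mu<0$ the hyperboloid $H=-\frac12$ meets the cone $\Kil=0$ along $h_3^2=-\frac{I_1}{\mu}$. These covectors give unit-speed time-like extremals running through the interior of $\A$, so your argument via $\partial\A$ does not apply to them. Also, formula~\eqref{eq-hyperbolic-light-geodesic} describes $\Exp$ only on the one-dimensional set $\{\Kil=0\}$ of the hyperboloid. It therefore cannot give a Jacobian that involves the transverse direction.

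The paper handles these covectors separately. It approximates $h$ by covectors with $\Kil<0$ and $|h|<\frac{2\pi I_1}{t_1}$, which have no conjugate points on $[0,t_1]$ by the first part, and then invokes homotopy invariance of the Maslov index.

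Within your framework you could instead write the reduced determinant in the variables $(t,h_3)$. There $q_0,q_3$ are analytic across $\Kil=0$, since $\cos\sqrt{y}$ and $\frac{\sin\sqrt{y}}{\sqrt{y}}$ are entire in $y$. At $\Kil=0$ the determinant equals $-\frac{t^2}{8I_1^2}\bigl(1+\mu-\frac{\mu}{3}(\frac{th_3}{2I_1})^2\bigr)\neq0$. Some such argument has to be supplied.
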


\begin{proof}
We find the zeros of the Jacobian of the exponential map.
It is sufficient to consider an exponential map to the Lie group $\SU_{1,1}$ given by formulas~\eqref{eq-hyperbolic-time-geodesic}--\eqref{eq-hyperbolic-space-geodesic}.
Notice that the Jacobi matrix for the exponential map is the following product of two matrices
$$
\frac{\partial(q_0,q_1,q_2,q_3)}{\partial(\tau,\bar{h}_1,\bar{h}_2,\bar{h}_3)} \cdot \frac{\partial(\tau,\bar{h}_1,\bar{h}_2,\bar{h}_3)}{\partial(t,h_1,h_2,h_3)},
$$
where the second matrix is nondegenerate. So, it is sufficient to find the first zeros of the determinant of the first matrix.
To do this we will use formulas~\eqref{eq-hyperbolic-time-geodesic} and \eqref{eq-hyperbolic-space-geodesic} for $\Kil{(h)} \neq 0$.
The case $\Kil{(h)} = 0$ we investigate separately later.
Since $\frac{\partial q_1}{\partial \bar{h}_3} = \frac{\partial q_2}{\partial \bar{h}_3} = 0$, then
\begin{equation}
\label{eq-det}
\det{\frac{\partial(q_0,q_1,q_2,q_3)}{\partial(\tau,\bar{h}_1,\bar{h}_2,\bar{h}_3)}} =
\left(\frac{\partial q_1}{\partial \bar{h}_1}\frac{\partial q_2}{\partial \bar{h}_2} - \frac{\partial q_1}{\partial \bar{h}_2}\frac{\partial q_2}{\partial \bar{h}_1} \right) \cdot
\left(\frac{\partial q_0}{\partial \tau}\frac{\partial q_3}{\partial \bar{h}_3} - \frac{\partial q_3}{\partial \tau}\frac{\partial q_0}{\partial \bar{h}_3} \right).
\end{equation}
To make computations unique for both cases $\Kil{(h)} < 0$ and $\Kil{(h)} > 0$ let us define the following functions:
$$
c(\tau,h) = \left\{
\begin{array}{lcl}
\cos{\tau}, & \text{if} & \Kil{(h)} < 0,\\
\cosh{\tau}, & \text{if} & \Kil{(h)} > 0,\\
\end{array}
\right. \qquad
s(\tau,h) = \left\{
\begin{array}{lcl}
\sin{\tau}, & \text{if} & \Kil{(h)} < 0,\\
\sinh{\tau}, & \text{if} & \Kil{(h)} > 0,\\
\end{array}
\right. \qquad
\delta(h) = \sgn{(\Kil{(h)})}.
$$
Obviously, the following equalities are satisfied
$$
c^2(\tau,h) - \delta(h)s^2(\tau,h) = 1, \qquad c'(\tau,h) = \delta(h)s(\tau,h), \qquad s'(\tau,h) = c(\tau,h).
$$
First, let us find the first positive root of the first multiplier in formula~\eqref{eq-det}. This multiplier equals $s(\tau,h)$.
Hence, its first positive root is equal to $\pi$ when $\Kil{(h)} < 0$ and there are no roots when $\Kil{(h)} > 0$.

Second, let us prove that the first positive root of the second multiplier in formula~\eqref{eq-det} is equal to the first positive root of the first multiplier.
We have the following partial derivatives
\begin{equation}
\label{eq-derivatives}
\begin{array}{rcl}
\frac{\partial q_0}{\partial \tau} & = & (\delta(h) + \mu\bar{h}_3^2)s(\tau,h)\cosh{(\tau\mu\bar{h}_3)} + \bar{h}_3(1+\mu)c(\tau,h)\sinh{(\tau\mu\bar{h}_3)},\\
\frac{\partial q_3}{\partial \tau} & = & (\delta(h) + \mu\bar{h}_3^2)s(\tau,h)\sinh{(\tau\mu\bar{h}_3)} + \bar{h}_3(1+\mu)c(\tau,h)\cosh{(\tau\mu\bar{h}_3)},\\
\frac{\partial q_0}{\partial \bar{h}_3} & = & \tau\mu c(\tau,h)\sinh{(\tau\mu\bar{h}_3)} + s(\tau,h)\sinh{(\tau\mu\bar{h}_3)} + \tau\mu\bar{h}_3 s(\tau,h)\cosh{(\tau\mu\bar{h}_3)},\\
\frac{\partial q_3}{\partial \bar{h}_3} & = & \tau\mu c(\tau,h)\cosh{(\tau\mu\bar{h}_3)} + s(\tau,h)\cosh{(\tau\mu\bar{h}_3)} + \tau\mu\bar{h}_3 s(\tau,h)\sinh{(\tau\mu\bar{h}_3)}.\\
\end{array}
\end{equation}
Whence, the second multiplier in formula~\eqref{eq-det} equals
\begin{equation}
\label{eq-conj-full}
s(\tau,h) \left[ \tau\mu(\delta(h) - \bar{h}_3^2)c(\tau,h) + (\delta(h) + \mu\bar{h}_3^2) s(\tau,h) \right].
\end{equation}
It is sufficient to find the first positive root of expression in the square brackets.
Note that $\delta(h) + \mu\bar{h}_3^2 < 0$. Indeed, if $\delta(h) = -1$, then this expression is negative, since $\mu < 0$.
If $\delta(h) = 1$, then $1 + \mu\bar{h}_3^2 < 0$, since $\bar{h}_3^2 > -\frac{1}{\mu}$ in this case
by Lemma~\ref{lem-h3}\,(1).
Moreover, if $\tau$ is a root, then $c(\tau,h) \neq 0$, since otherwise $s(\tau,h) = 0$ and we get a contradiction.
So, $\tau$ is a root of the expression in square brackets if and only if
\begin{equation}
\label{eq-tgth}
\frac{s(\tau,h)}{c(\tau,h)} = -\tau\mu\frac{\delta(h) - \bar{h}_3^2}{\delta(h) + \mu\bar{h}_3^2}.
\end{equation}
Let us determine where the graph of the function $\frac{s(\tau,h)}{c(\tau,h)}$ intersects the line corresponding to the right-hand side of the equation above.
The answer is at Fig.~\ref{fig-conj}.

Consider the case $\delta(h) = -1$. We have
\begin{equation}
\label{eq-t-conj-coeff0}
-1 - \bar{h}_3^2 < 0, \quad -1 + \mu\bar{h}_3^2 < 0, \qquad \mu < 0 \quad \Rightarrow \quad -\mu\frac{-1 - \bar{h}_3^2}{-1 + \mu\bar{h}_3^2} > 0.
\end{equation}
Hence, the coefficient of the line is positive. Moreover, this coefficient is less than the derivative of $\tan$ at zero.
Indeed,
\begin{equation}
\label{eq-t-conj-coeff1}
-\mu \frac{-1 - \bar{h}_3^2}{-1 + \mu\bar{h}_3^2} < 1 \quad \Leftrightarrow \quad \mu + \mu\bar{h}_3^2 > -1 + \mu\bar{h}_3^2 \quad \Leftrightarrow \quad \mu > -1.
\end{equation}
It follows that for the first positive root we obtain $\tau \in (\pi, \frac{3\pi}{2})$.

Consider now the case $\delta(h) = 1$. Since $\mu < 0$ and $\bar{h}_3^2 > -\frac{1}{\mu} > 1$ by Lemma~\ref{lem-h3}\,(1), we have
\begin{equation}
\label{eq-s-conj-coeff0}
1 - \bar{h}_3^2 < 0, \quad 1 + \mu\bar{h}_3^2 < 0, \qquad \mu < 0 \quad \Rightarrow \quad -\mu\frac{1 - \bar{h}_3^2}{1 + \mu\bar{h}_3^2} > 0.
\end{equation}
So, the coefficient of the line is positive, but it is greater than the derivative of $\tanh$ at zero, since
\begin{equation}
\label{eq-s-conj-coeff1}
-\mu \frac{1 - \bar{h}_3^2}{1 + \mu\bar{h}_3^2} > 1 \quad \Leftrightarrow \quad -\mu + \mu\bar{h}_3^2 < 1 + \mu\bar{h}_3^2 \quad \Leftrightarrow \quad -\mu < 1.
\end{equation}
It follows that there are no positive roots.

\begin{figure}
\centering{
\minipage{0.45\textwidth}
  \centering{\includegraphics[width=0.7\linewidth]{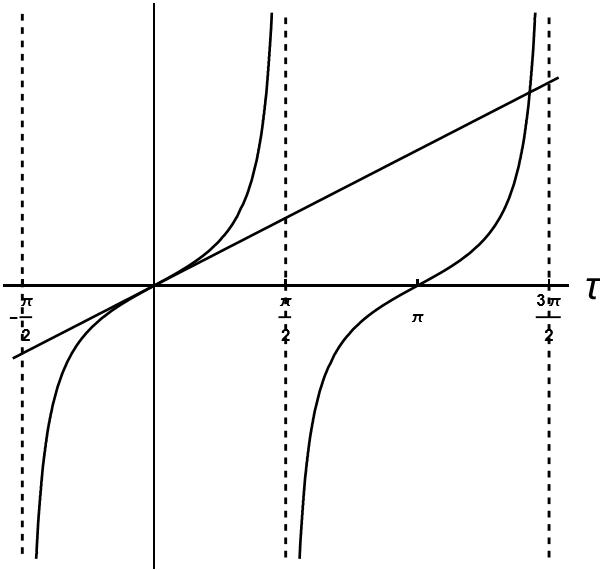} \\ $\Kil{(h)} < 0$}
\endminipage
\hfil
\minipage{0.45\textwidth}
  \centering{\includegraphics[width=0.7\linewidth]{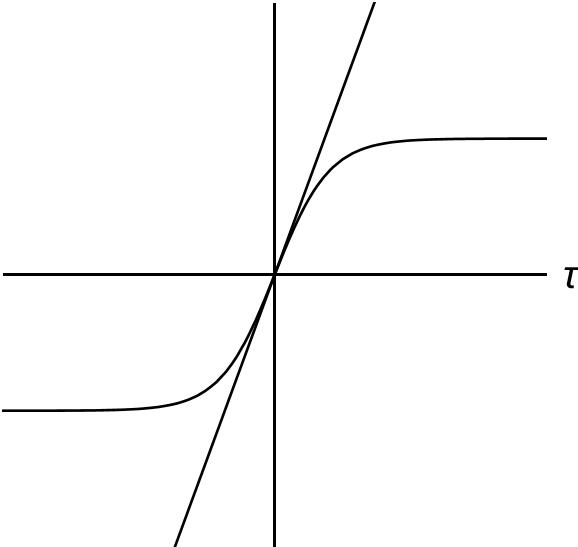} \\ $\Kil{(h)} > 0$}
\endminipage}
\caption{\label{fig-conj}The location of the first positive root of the equation~\eqref{eq-tgth}.}
\end{figure}
In any case when $\Kil{(h)} \neq 0$ the fist positive root of the Jacobian equals to the first positive foot of the equation $s(\tau,h) = 0$.
So, we get the required expression for the first conjugate time. Indeed, if $\tau = \pi$, then $t = \frac{2\pi I_1}{|h|}$, see notation~\eqref{eq-notation}.

To complete the calculation of the first conjugate time it is sufficient to prove that for initial covectors such that $\Kil{(h)} = 0$ the corresponding extremal trajectories have no conjugate points. Assume by contradiction that for such a covector $h$ the conjugate time is finite $\tconj(h) < +\infty$.
Since the conjugate points are isolated~\cite{agrachev} there exists $t_1 > \tconj(h)$
such that $t_1$ is not a conjugate time for the corresponding extremal trajectory $\Gamma_h$.
Consider a homotopy of this initial covector, i.e., a continuous curve $\gamma : [0,1] \rightarrow \g^*$ such that $\gamma(0) = h$, $\Kil{(\gamma(s))} < 0$ for $s > 0$
and $|\gamma(s)| < \frac{2\pi I_1}{t_1}$.
The last condition means that the are no conjugate points on the arc $\{\Gamma_{\gamma(s)}(t) \, | \, t \in [0,t_1]\}$ for $s > 0$.
The number of conjugate points on an arc of an extremal trajectory is equal to 
the Maslov index~\cite{arnold-index-maslova} of the path $\Lambda^s(t) = e^{-t\vec{H}}_* T^*_{\Gamma_{\gamma(s)}(t)}G$ 
in the Grassmanian of the Lagrange subspaces of the space $T_{(e,0)}T^*G$.
Thanks to homotopy invariance of the Maslov index~\cite{agrachev} the numbers of conjugate points on the arcs $\{\Gamma_{\gamma(s)}(t) \, | \, t \in [0,t_1]\}$ are equal for $s \in [0,1]$. Since there are no conjugate points for $s = 1$, it follows that there are no conjugate points for $s = 0$ as well.

Now let us determine the first caustic. We have
$$
\Conj = \left\{\Exp(h,\tconj(h)) \, \Bigm| \, \frac{h_1^2}{I_1} - \frac{h_2^2}{I_1} - \frac{h_3^2}{I_3} = 1, \ h_1 < 0, \ \Kil{(h)} < 0 \right\}.
$$
From formulas~\eqref{eq-hyperbolic-time-geodesic} we get
$$
q_0(\pi) = -\cosh{(\pi\mu\bar{h}_3)}, \quad q_1(\pi) = q_2(\pi) = 0, \quad q_3(\pi) = -\sinh{(\tau\mu\bar{h}_3)} \quad \text{for} \quad \bar{h}_3 \in \R.
$$
Whence, $c(\pi) = \arg{\{q_0(\pi) + iq_1(\pi)\}} = \pi$ and $w(\pi) \in i\R$.
\end{proof}

\subsection{\label{sec-oblate-opt}Optimality of extremal trajectories}

Let us remind two definitions.

\begin{definition}
\label{def-cut}
A time $\tcut(h) \in \R \cup \{+\infty\}$ is called \emph{the cut time for the extremal trajectory with the initial covector $h \in \g^*$} if the arc
$\{\Gamma_h(t) \, | \, t \in [0,t_1]\}$ is optimal for $t_1 \leqslant \tcut(h)$ and is not optimal for $t_1 > \tcut(h)$. The corresponding point $\Gamma_h(\tcut(h))$ is called \emph{a cut point}. The set of the cut points for all extremal trajectories is called \emph{the cut locus}, it is denoted by $\Cut$.
\end{definition}

\begin{definition}
\label{def-maxwell}
A point $x$ on an extremal trajectory $\Gamma_{h_1}$ is called \emph{a Maxwell point} if there exists a distinct extremal trajectory $\Gamma_{h_2}$ coming to the point $x$ at the same time with the same Lorentzian length $x = \Gamma_{h_1}(\tmax) = \Gamma_{h_2}(\tmax)$. The corresponding time $\tmax$ is called \emph{a Maxwell time}.
\end{definition}

It is well known that after a Maxwell point an extremal trajectory can not be optimal~\cite[Prop.~2.1]{sachkov-didona}.
Thus, we are interested in the first Maxwell time.
The natural reason for Maxwell time appearance are symmetries.
We find the first Maxwell time for symmetries as a function $\tmax: \g^* \rightarrow \R_+ \cup \{+\infty\}$ and prove that it is equal to the cut time.
More precisely, we prove that the exponential map is a diffeomorphism between the domain in $\g^* \times \R_+$ bounded by the first Maxwell time for symmetries and the domain $\interior{\A_{\tmax}}$, where $\interior{\A_{\tmax}}$ is interior of the attainable set by time less or equal to the first Maxwell time for symmetries.

\begin{definition}
\label{def-sym}
A pair of diffeomorphisms $(s, S)$
$$
s: \g^* \rightarrow \g^*, \qquad S : G \rightarrow G
$$
is called \emph{a symmetry of the exponential map} if $S \circ \Exp(h,t) = \Exp (s(h),t)$ for any $h \in \g^*$ and $t \in \R_+$.
\end{definition}

\begin{proposition}
\label{prop-sym}
The group of symmetries of the exponential map contains the group $\mathrm{Sym} = \mathrm{O}_{1,1} \times \Z_2$.
\end{proposition}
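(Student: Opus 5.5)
To prove Proposition~\ref{prop-sym} I will exhibit explicit generators of $\mathrm{O}_{1,1} \times \Z_2$ as pairs $(s,S)$ and check the identity $S \circ \Exp(h,t) = \Exp(s(h),t)$ directly from Proposition~\ref{prop-geodesics}, $\Exp(h,t) = \exp\bigl(t\,d_hH\bigr)\cdot\exp\bigl(\frac{t\mu h_3}{I_1}e_3\bigr)$ with $d_hH = -\frac{h_1}{I_1}e_1 + \frac{h_2}{I_1}e_2 + \frac{h_3}{I_3}e_3$. The guiding principle is this: if $\varphi$ is an automorphism (or anti-automorphism) of $\g$ preserving the quadratic form of $H$ and mapping $e_3 \mapsto \pm e_3$, then the induced map on $G$ (conjugation, or an inversion-type map) transforms products of exponentials in a controlled way.

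The first step handles the identity component, i.e.\ the hyperbolic rotations. Take $s_a = \Ad^*\exp(-a e_3)$ acting on $\g^*$; it rotates $(h_1,h_2)$ by $R^h_a$ and fixes $h_3$. Take $S_a(g) = \exp(ae_3)\, g\, \exp(-ae_3)$. Then $\Ad\exp(ae_3)$ maps $d_hH$ to $d_{s_a h}H$, since $H$ is $\SO_{1,1}$-invariant and $I_1=I_2$. It also fixes $e_3$, and $h_3$ is unchanged. Consequently $S_a\Exp(h,t) = \exp(t\,d_{s_ah}H)\exp(\frac{t\mu h_3}{I_1}e_3) = \Exp(s_ah,t)$. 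I will fix the sign convention so that $\Ad\exp(ae_3)$ acts on $(-e_1,e_2)$-coordinates by the same hyperbolic rotation as in the proof of Proposition~\ref{prop-geodesics}.

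The second step gives the other component of $\mathrm{O}_{1,1}$, namely a reflection. Take the Lie algebra automorphism $\varphi$ with $e_1\mapsto e_1$, $e_2\mapsto -e_2$, $e_3\mapsto -e_3$; from~\eqref{eq-commutators} this is an automorphism. In $\SU_{1,1}$ coordinates it is $(c,w)\mapsto(c,-w)$ and lifts to $G$. We have $\varphi(d_hH) = d_{s h}H$ for $s(h_1,h_2,h_3) = (h_1,-h_2,-h_3)$, and $\varphi(e_3) = -e_3$. Since $(sh)_3 = -h_3$, the second factor also matches: $\exp(-\frac{t\mu h_3}{I_1}e_3) = \exp(\frac{t\mu (sh)_3}{I_1}e_3)$. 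The reflection $h_2\mapsto -h_2$ together with the rotations generates $\mathrm{O}_{1,1}$ acting on $(h_1,h_2)$. Strictly, this reflection also flips $h_3$; I will check that the resulting group is still isomorphic to $\mathrm{O}_{1,1}$, or else compose with the $\Z_2$ below.

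The third step, which I expect to be the main obstacle, is the $\Z_2$ factor: the reflection $h_3\mapsto -h_3$ alone, with $(h_1,h_2)$ fixed. No automorphism of $\g$ does this, so I will use an anti-automorphism. Take $\psi(x) = -\theta(x)$ with $\theta$ the automorphism $e_1\mapsto e_1$, $e_2\mapsto e_2$, $e_3\mapsto -e_3$ composed appropriately, and $S(g) = \Theta(g)^{-1}$ with $\Theta$ fixing the factor order reversal. One checks that $\exp(X)\exp(Y)$ with commuting $X$, $Y$ maps to $\exp(\psi Y)\exp(\psi X)$, and these still commute. The delicate points are to verify that some such $\theta$ is an automorphism (e.g.\ $e_1\mapsto -e_1$, $e_2\mapsto e_2$, $e_3\mapsto e_3$ composed with inversion), and that the sign of the $e_3$-coefficient matches, possibly after swapping the order of the commuting factors. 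Finally I will confirm that all three generators commute appropriately, giving a subgroup isomorphic to $\mathrm{O}_{1,1}\times\Z_2$.
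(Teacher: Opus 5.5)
Your Steps~1 and~2 are sound, and they are more explicit than the paper. The paper does not verify anything directly: it invokes the general theorem it cites on symmetries of the vertical subsystem whose duals are (anti-)automorphisms, and takes as generators the hyperbolic rotations and the reflections $h_2\mapsto-h_2$, $h_3\mapsto-h_3$. Two small corrections. First, in Proposition~\ref{prop-geodesics} the first factor is $\exp\frac{t}{I_1}(-h_1e_1+h_2e_2+h_3e_3)$, not $\exp(t\,d_hH)$; you only use its $\Ad$-equivariance, so nothing breaks. Second, your map $\rho(h)=(h_1,-h_2,-h_3)$ with $S(c,w)=(c,-w)$ is conjugation by $\exp(\pi e_1)$, an honest automorphism symmetry. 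It is the composition of the paper's two reflections, so the rotations together with $\rho$ already give the $\mathrm{O}_{1,1}$ part.

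The gap is Step~3. The factors $X=\frac{t}{I_1}(-h_1e_1+h_2e_2+h_3e_3)$ and $Y=\frac{t\mu h_3}{I_1}e_3$ do not commute: $[e_3,e_1]=e_2$, $[e_3,e_2]=e_1$ and $h_1\neq0$, so $[X,Y]\neq0$ whenever $\mu h_3\neq0$. The order reversal caused by an anti-automorphism is therefore a real obstruction, not bookkeeping. Take $\sigma=\mathrm{diag}(1,1,-1)$, dual to $s(h)=(h_1,h_2,-h_3)$, and $S(g)=\Psi(g)^{-1}$ with $d\Psi=-\sigma=\mathrm{diag}(-1,-1,1)$. (Your candidate $e_1\mapsto-e_1$, $e_2\mapsto e_2$, $e_3\mapsto e_3$ is not an automorphism.) Then
\[
S(\Exp(h,t))=\exp(\sigma Y)\exp(\sigma X)=\exp\bigl(\Ad_{\exp(\sigma Y)}\sigma X\bigr)\exp(\sigma Y)=\Exp\bigl(s(e^{t\vec H}h),t\bigr).
\]
So the reflection $\Image w\mapsto-\Image w$ sends the trajectory of $h$ to the trajectory whose initial covector is $s$ applied to the \emph{final} covector $h(t)$. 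This differs from $s(h)$ by a hyperbolic rotation through $\frac{t\mu h_3}{I_1}$, because $s$ reverses the vertical flow (the sign of $\dot h_1=\frac{\mu}{I_1}h_2h_3$ flips).

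No swapping of factors can repair this: with $t$-independent $s$, the identity $S\circ\Exp(h,t)=\Exp(s(h),t)$ holds for no map $S$ whatsoever. To see this, take $\Kil(h)<0$, $\mu h_3\neq0$, $\tau$ a root of $q_3=0$ with $\sin\tau\neq0$, and $\bar h_2=-\tanh(\tau\mu\bar h_3)\bar h_1$, so that $w=0$ at $\Exp(h,t)$. Then $\Exp(h,t)=\Exp(\rho h,t)$. But by \eqref{eq-hyperbolic-time-geodesic} the points $\Exp(sh,t)$ and $\Exp(s\rho h,t)$ have $\Real w=\mp 2\bar h_1\sin\tau\sinh(\tau\mu\bar h_3)$, which are opposite and nonzero.

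The missing idea is to let the $\Z_2$ factor act on $\g^*\times\R_+$ by $(h,t)\mapsto(s(e^{t\vec H}h),t)$, with $S$ the reflection in $\Image w=0$. This is the time-reversing form in which anti-automorphisms produce symmetries, and it is exactly what the later Maxwell analysis uses, where the Maxwell points for this reflection are the fixed points $q_3=0$. You then need to check that this map is an involution, using $s\circ e^{t\vec H}=e^{-t\vec H}\circ s$, and that it commutes with the rotations and with $\rho$.
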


\begin{proof}
Any symmetry $s$ of the Hamiltonian system for covectors such that the dual map $s^*$ is an (anti-) automorphism of the Lie algebra $\g$ induces a symmetry of the exponential map $(s,S)$ such that $d_eS = s^*$, see~\cite[Th.~1]{podobryaev-symmetries}. Such symmetries are the hyperbolic rotations in the plane $(h_1,h_2)$, the reflection with respect to the plane $h_2 = 0$ and the reflection with respect to the plane $h_3 = 0$.

Obviously, the corresponding symmetries of the Lie group $G$ for reflections are also reflections with respect to the planes $\Real{w} = 0$ and $\Image{w} = 0$ in the model of the group $G$ with $(c,w)$-coordinates.

The hyperbolic rotation acts on the Lie group $\SU_{1,1}$ by hyperbolic rotation in the $(q_1,q_2)$-plane.
Hence, the fixed point of these rotations has the form $q_0 \in \R$, $q_1 = q_2 = 0$, $q_3 \in \R$, $q_0^2 - q_3^2 = 1$.
The set of fixed points in the universal covering $G$ is the union of lines $\bigcup\limits_{k \in \Z} \{(c,w) \, | \, c =\pi k, \, w \in i\R \}$.
The orbits of this actions on the universal covering $G$ are represented at Fig.~\ref{fig-orbits}.
\end{proof}

\begin{figure}
\centering{
\minipage{0.25\textwidth}
  \centering{\includegraphics[width=\linewidth]{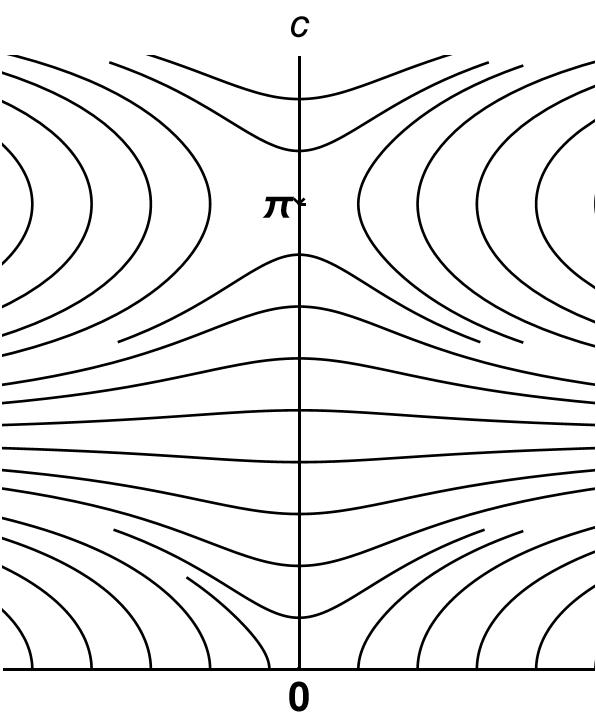} \\ $\Image{w} = 0$}
\endminipage
\hfil
\minipage{0.25\textwidth}
  \centering{\includegraphics[width=\linewidth]{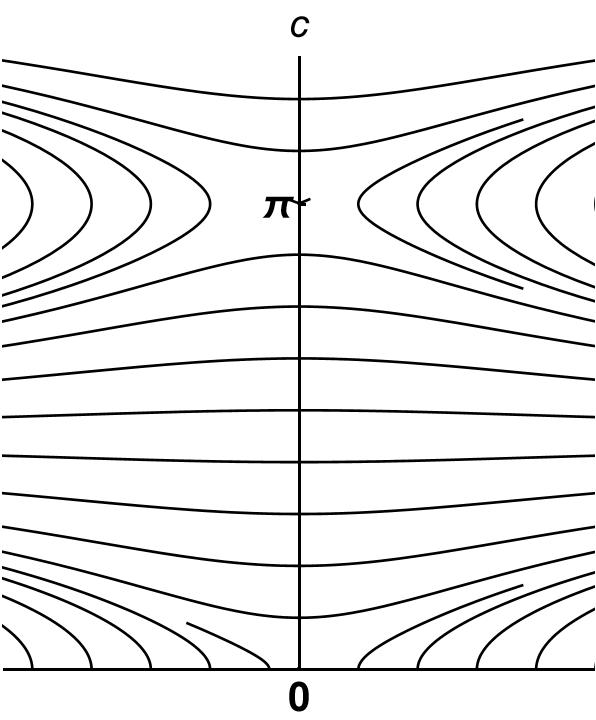} \\ $\Image{w} = \pm 2$}
\endminipage
\hfil
\minipage{0.25\textwidth}
  \centering{\includegraphics[width=\linewidth]{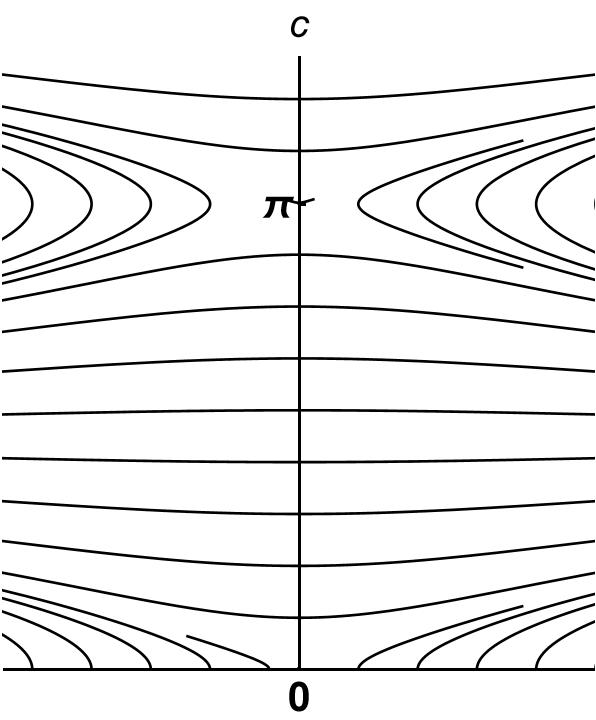} \\ $\Image{w} = \pm 3$}
\endminipage}
\caption{\label{fig-orbits}Orbits of the $\SO_{1,1}$-action at the group $G$ for different sections $\Image{w} = \const$.}
\end{figure}

\begin{proposition}
\label{prop-oblate-tmax}
If $\mu < 0$, then the first Maxwell time for symmetries equals
$$
\tmax(h) = \left\{
\begin{array}{rcl}
\frac{2\pi I_1}{|h|}, & \text{if} & \Kil{(h)} < 0,\\
+\infty, & \text{if} & \Kil{(h)} \geqslant 0.\\
\end{array}
\right.
$$
The corresponding set of Maxwell points is $\mathcal{M} = \{(\pi,w) \in G \, | \, w \in i\R \}$.
\end{proposition}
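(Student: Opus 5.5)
The plan is to find, for each symmetry in $\mathrm{Sym}$ from Proposition~\ref{prop-sym}, the first positive time at which an extremal trajectory $\Gamma_h$ meets its image $\Gamma_{s(h)}$ with $s(h)\neq h$. For a Maxwell point the two trajectories must arrive at the same point at the same time, so $\Exp(h,t)$ has to be a fixed point of the corresponding map $S$ on $G$. The first Maxwell time for symmetries is then the minimum of these times over all symmetries.

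I will handle the hyperbolic rotations first, since they give the answer. A hyperbolic rotation $s$ by angle $a\neq 0$ in the $(h_1,h_2)$-plane maps $h$ to a different covector unless $h_1=h_2=0$, which is impossible because $h_1<0$. The corresponding fixed-point set in $G$ is the union of lines $\{c=\pi k,\ w\in i\R\}$, so we need $q_1=q_2=0$. By formulas~\eqref{eq-hyperbolic-time-geodesic}--\eqref{eq-hyperbolic-space-geodesic}, the vector $(-q_1,q_2)$ equals $s(\tau,h)$ times a hyperbolic rotation of $(\bar h_1,\bar h_2)$. This vector is nonzero, since $\bar h_1^2-\bar h_2^2$ is nonzero off the light cone of the $(h_1,h_2)$-plane; even on that cone the vector itself is nonzero because $h_1\neq 0$. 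Hence $q_1=q_2=0$ holds iff $s(\tau,h)=0$. This gives $\tau=\pi$ (that is, $t=2\pi I_1/|h|$) when $\Kil(h)<0$, and no solution when $\Kil(h)>0$. For $\Kil(h)=0$, formula~\eqref{eq-hyperbolic-light-geodesic} has the prefactor $t/(2I_1)$, which never vanishes for $t>0$, so there is no solution either. At $\tau=\pi$ we get $q_0=-\cosh(\pi\mu\bar h_3)<0$, so $c=\pi$ (the first sheet) and $w=q_3 i\in i\R$. As $\bar h_3$ ranges over $\R$, $q_3$ covers all of $\R$, which yields $\mathcal{M}=\{(\pi,w)\mid w\in i\R\}$.

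Next I need to show that the reflections and the remaining group elements (compositions with the reflections) never produce an earlier Maxwell time. For the reflection $h_2\mapsto -h_2$, the fixed points in $G$ satisfy $\Real w=q_2=0$, and we must have $h_2\neq0$. So we need the $q_2$-component $s(\tau,h)(\sinh(\tau\mu\bar h_3)\bar h_1+\cosh(\tau\mu\bar h_3)\bar h_2)$ to vanish. It vanishes when $\tanh(\tau\mu\bar h_3)=-\bar h_2/\bar h_1$, and this can happen early. The way out is to check that at such a point the two trajectories $\Gamma_h$ and $\Gamma_{s(h)}$ actually coincide as points but do not give a genuine new intersection. The honest approach is to compare the full coordinates: the images differ in $q_1$ unless $q_1=0$ as well. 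Reflecting $h_2$ changes the trajectory $g(t)$ by the automorphism, so $\Exp(s(h),t)=S(\Exp(h,t))$, and equality with $\Exp(h,t)$ requires only $\Real w=0$. So a genuine obstacle appears here.

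The fix is that such an intersection point is also obtained from $\Gamma_h$ by a suitable hyperbolic rotation composed with the reflection. In other words, $\Gamma_{s(h)}$ equals $\Gamma_{r(h)}$ for the element $r\in\mathrm{Sym}$ fixing the point. I expect this step to be the main difficulty: deciding which symmetry-induced coincidences count as Maxwell points with distinct covectors. My first attempt will be to use the $\SO_{1,1}$-invariance to normalize $h_2=0$. In this normalization, a reflection $h_2\mapsto-h_2$ acts trivially on the covector, while the reflection $h_3\mapsto -h_3$ has fixed points $\Image w=q_3=0$. So I then need to show that the equation $q_3=0$ has no root $\tau\in(0,\pi)$ with $\bar h_3\neq0$. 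Writing $q_3=c\sinh(\tau\mu\bar h_3)+\bar h_3 s\cosh(\tau\mu\bar h_3)$, this reduces to the sign analysis $\tanh(\tau\mu\bar h_3)/\bar h_3=-s/c$. I will carry it out with the same line-versus-$\tan/\tanh$ comparison used in Theorem~\ref{th-oblate-conj}, using $-1<\mu<0$ and Lemma~\ref{lem-h3}. This should show that the first root for $\Kil(h)<0$ is at least $\pi$, and that there is none when $\Kil(h)\geqslant0$.
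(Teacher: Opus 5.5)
\textbf{There is a genuine gap: the $\mathrm{O}_{1,1}$ reflections.}

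Your treatment of the hyperbolic rotations agrees with the paper. So does your plan for $h_3\mapsto-h_3$: reduce $q_3=0$ to $\tanh(\tau\mu\bar h_3)=-\bar h_3\tan\tau$ (resp.\ $-\bar h_3\tanh\tau$) and compare slopes at $\tau=0$ using $-1<\mu<0$.

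The gap is the reflection $h_2\mapsto-h_2$, and with it every reflection in $\mathrm{O}_{1,1}$. You rightly notice that the factor $\sinh a\,\bar h_1+\cosh a\,\bar h_2$ of $q_2$ (with $a=\tau\mu\bar h_3$) can vanish for $\tau<\pi$. Your way out does not work.
\begin{itemize}
\item Normalizing $h_2=0$ by $\SO_{1,1}$ only conjugates the symmetry group. At the normalized covector you must still handle every reflection $r\circ s_2$ that moves $h$.
\item Under the identity you use, $\Exp(s(h),t)=S(\Exp(h,t))$, the point $\Exp(h,t)$ has $(-q_1,q_2)=\bar h_1\sin\tau\,(\cosh a,\sinh a)$. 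For $\bar h_3\neq0$ and every $\tau\in(0,\pi)$, this lies on the fixed axis of a reflection that moves $h$. So the obstacle is only relocated, and you would get ``Maxwell points'' along the whole arc.
\item The remark that $\Gamma_{s(h)}=\Gamma_{r(h)}$ for some $r$ does not help, since $r(h)\neq h$ is still a different extremal.
\end{itemize}

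\textbf{The missing idea: reflections act with time reversal.} The identity above is false for reflections. The map $e_2\mapsto-e_2$ fixes $e_3=[e_1,e_2]$ while $[e_1,-e_2]=-e_3$, so it is an anti-automorphism of $\g$, not an automorphism. Correspondingly, $h_2\mapsto-h_2$ reverses time in the vertical subsystem of~\eqref{eq-hamiltonian-system}. Concretely, replacing $\bar h_2$ by $-\bar h_2$ in~\eqref{eq-hyperbolic-time-geodesic} gives $(-q_1,q_2)^T=\sin\tau\,\mathrm{diag}(1,-1)R^h_{-a}(\bar h_1,\bar h_2)^T$. This is not the reflection of $\sin\tau\,R^h_a(\bar h_1,\bar h_2)^T$.

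The correct relation involves the final covector:
$S_2(\Exp(h,t))=\Exp(s_2(h(t)),t)$, where $(h_1(t),h_2(t))^T=R^h_{2a}(h_1,h_2)^T$.

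Hence a Maxwell point for $s_2$ needs both $q_2=0$ and $s_2(h(t))\neq h$. A direct check shows that $\sinh a\,\bar h_1+\cosh a\,\bar h_2=0$, i.e.\ $h_2(t/2)=0$, is equivalent to $s_2(h(t))=h$. In that case the arc $\Gamma_h|_{[0,t]}$ is mapped onto itself and no second extremal arrives.

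Only the prefactor can give genuine Maxwell points: $\sin\tau$, or $\sinh\tau$, or $t/(2I_1)$ when $\Kil(h)\geqslant0$. So the Maxwell time for $s_2$ equals $t_r$. Conjugating by rotations gives the same conclusion for every reflection in $\mathrm{O}_{1,1}$.

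This is exactly the equality the paper asserts without proof (``the Maxwell time corresponding to the reflection with respect to the plane $h_2=0$ equals \dots\ the hyperbolic rotations''). For $h_3\mapsto-h_3$ the same time reversal occurs but is harmless, because $s_3(h(t))\neq h$ iff $h_3\neq0$. So your $q_3$ analysis stands once the reflection step is repaired this way.
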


\begin{proof}
Let us define the Maxwell time corresponding to the hyperbolic rotations by $t_r$.
It is easy to see from Proposition~\ref{prop-geodesics-in-coordinates} that $t_r(h)$ equals $+\infty$ for initial covectors $h$ such that $\Kil{(h)} \geqslant 0$ and $t_r(h) = \frac{2\tau_r(h)I_1}{|h|}$ otherwise, where $\tau_r(h)$ is the first positive number such that $q_1(\tau_r) = q_2(\tau_r) = 0$.
Obviously, $\tau_r = \pi$.

Let us denote by $t_3$ the Maxwell time corresponding to the reflection with respect to the plane $h_3 = 0$.
Thus, for $\Kil{(h)} \neq 0$ the corresponding value $\tau_3(h)$ is the first positive root of the equation $q_3(t) = 0$.
Since $\cosh{(\tau_3(h)\mu\bar{h}_3)} \neq 0$, we get that for $\Kil{(h)} < 0$ the equality $\cos{\tau_3(h)} = 0$ implies $\sin{\tau_3(h)} = 0$, this is a contradiction.
So, since $\cos{\tau_3(h)} \neq 0$ the condition $q_3(\tau) = 0$ is equivalent to the condition
\begin{equation}
\label{eq-oblate-maxwell}
\begin{array}{lcl}
\tanh{(\tau\mu\bar{h}_3)} = -\bar{h}_3 \tan{\tau}, & \text{for} & \Kil{(h)} < 0,\\
\tanh{(\tau\mu\bar{h}_3)} = -\bar{h}_3 \tanh{\tau}, & \text{for} & \Kil{(h)} > 0.\\
\end{array}
\end{equation}
The similar considerations for $\Kil{(h)} = 0$ give the condition
$$
\tanh{\left(\frac{t\mu h_3}{2I_1}\right)} = -\frac{th_3}{2I_1}.
$$
See Fig.~\ref{fig-t3} for the plots of these functions.
Assume that $h_3 > 0$ (the case $h_3 < 0$ is similar, and if $h_3 = 0$ the corresponding extremal trajectory lies in the plane $\Image{w} = q_3 = 0$).
Note that the derivative at zero of the left-hand function is greater than the derivative of the right-hand function, i.e.,
$\mu\bar{h}_3 > -\bar{h}_3$, since $\mu > -1$ (or $\frac{\mu h_3}{2I_1} > -\frac{h_3}{2I_1}$). 
This means that for $\Kil{(h)} < 0$ we obtain $\tau_3(h) \in (\pi, \frac{3\pi}{2})$, since $\mu < 0$. It follows that in this case $t_3(h) > t_r(h)$. 
If $\Kil{(h)} \geqslant 0$, then $t_3(h) = +\infty$.

\begin{figure}
\centering{
\minipage{0.25\textwidth}
  \centering{\includegraphics[width=\linewidth]{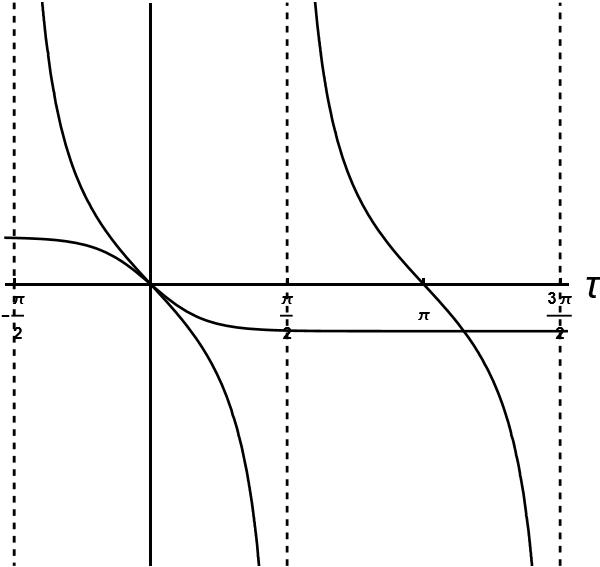} \\ $\Kil{(h)} < 0$}
\endminipage
\hfil
\minipage{0.25\textwidth}
  \centering{\includegraphics[width=\linewidth]{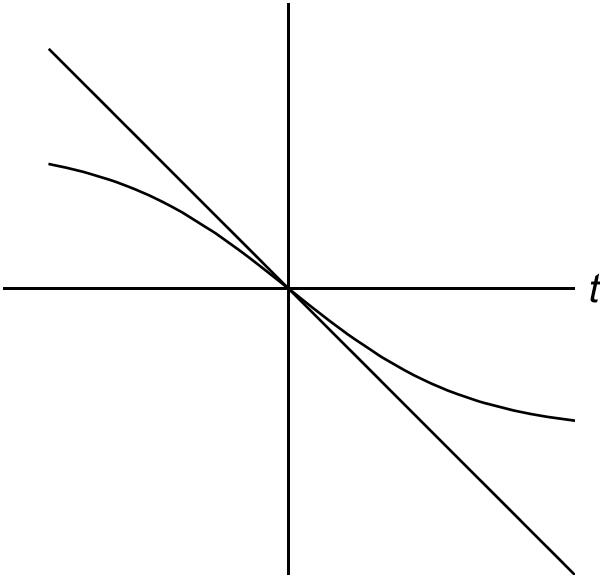} \\ $\Kil{(h)} = 0$}
\endminipage
\hfil
\minipage{0.25\textwidth}
  \centering{\includegraphics[width=\linewidth]{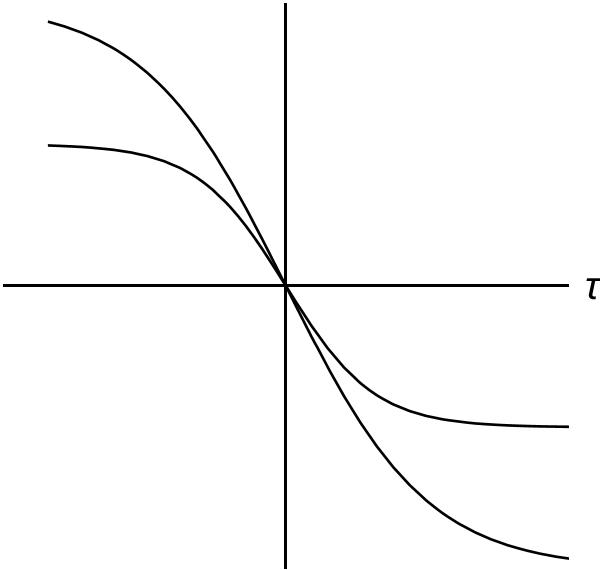} \\ $\Kil{(h)} > 0$}
\endminipage}
\caption{\label{fig-t3}The Maxwell time corresponding to the reflection with respect to the plane $h_3 = 0$ is greater or equal to the Maxwell time corresponding to the hyperbolic rotations.}
\end{figure}

Finally, note that the Maxwell time corresponding to the reflection with respect to the plane $h_2 = 0$ equals to the Maxwell time corresponding to the hyperbolic rotations.

It follows, that the first Maxwell set for symmetries is $\mathcal{M} = \{(\pi,w) \, | \, w \in i\R \}$.
\end{proof}

\begin{proposition}
\label{prop-oblate-Atmax}
The set that is attainable by extremal trajectories by time not exceeding the first Maxwell time is
$$
\A_{\tmax} = \{ \Exp(h,t) \, | \, 0 \leqslant t \leqslant \tmax(h) \} =
\left( \A \cap \left\{ (c,w) \Bigm| c < \pi - \arcsin{\frac{|\Real{w}|}{\sqrt{1+|w|^2}}} \right\} \right) \cup \mathcal{M}.
$$
\end{proposition}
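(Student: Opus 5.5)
The plan is to exploit the $\SO_{1,1}$-symmetry (Proposition~\ref{prop-sym}) to reduce the statement to a two-dimensional one. The hyperbolic rotations act on $\SU_{1,1}$ by hyperbolic rotations of the $(q_1,q_2)$-plane, so $q_0$, $q_3$ and $q_1^2-q_2^2$ are invariant. Since $q_0^2+q_1^2 = 1+|w|^2$, we have $\sin c = q_1/\sqrt{1+|w|^2}$. Hence the condition $c<\pi-\arcsin\frac{|\Real w|}{\sqrt{1+|w|^2}}$, combined with $c>0$ near the relevant sheet, is the condition $q_1>|q_2|$ on the sheet $c\in(0,\pi)$. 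The boundary surface $q_1=|q_2|$ is the union of the light-like $\SO_{1,1}$-orbits, that is, the degenerate orbits in Fig.~\ref{fig-orbits}.

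\emph{Inclusion $\subseteq$.} For $\Kil(h)<0$ I would use Proposition~\ref{prop-geodesics-in-coordinates}. Since $h_1<0$, the rotated vector $-R^h_{\tau\mu\hhh}(\bar h_1,\bar h_2)^T$ has positive first component, so
$$
q_1^2-q_2^2=\sin^2\tau\,(\bar h_1^2-\bar h_2^2)=\sin^2\tau\,(1+\hhh^2)>0, \qquad q_1=\sin\tau\cdot(\text{positive})>0
$$
for $\tau\in(0,\pi)$, i.e.\ for $0<t<\tmax(h)$. For $\Kil(h)>0$ (where $\tmax=+\infty$) the analogous computation gives
$$
q_1^2-q_2^2=\sinh^2\tau\,(\hhh^2-1)>0,
$$
because $\hhh^2>-1/\mu>1$ by Lemma~\ref{lem-h3}. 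For $\Kil(h)=0$ one gets $q_1^2-q_2^2=(t/2I_1)^2h_3^2\ge 0$ from formula~\eqref{eq-hyperbolic-light-geodesic}. In all cases $q_1>|q_2|$, hence $\arg(q_0+iq_1)\in(0,\pi)$ satisfies the inequality of the statement strictly. The endpoints $\tau=\pi$ land in $\mathcal M$ by Proposition~\ref{prop-oblate-tmax}. Every point of the image lies in $\A$ trivially. The only degenerate case, $h_3=0$ with $\Kil=0$, needs a one-line check in the plane $\Image w=0$.

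\emph{Inclusion $\supseteq$.} By the symmetry it suffices to work in the section $\Real w=q_2=0$ of each orbit with $q_1>|q_2|$. The invariants reduce the exponential map, restricted to $t<\tmax$, to a map from the two-parameter family $(\tau,\hhh)$ (together with the type $\Kil\gtrless 0$ and the light-like family glued between them) into the half-plane $\{(c,\Image w): 0<c<\pi\}\cap\A$. Concretely, I would show this is a proper continuous map onto that region and then apply a degree (or connectedness/open-map) argument.

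Openness follows from Theorem~\ref{th-oblate-conj}: there are no conjugate points for $t<\tmax$, so the map is a local diffeomorphism there. Properness requires analysing the boundary of the parameter domain. As $\tau\to0$ the image tends to $e$. As $\tau\to\pi$ it tends to $\mathcal M$. As $\hhh\to\pm1/\sqrt{-\mu}$ from the space-like side, and in the light-like family, it tends to the light-like extremals that sweep $\partial\A$ by Proposition~\ref{prop-oblate-atset}. As the covector escapes along a hyperbolic-rotation orbit ($|\bar h_1|\to\infty$), it tends to the light-like orbits $q_1=|q_2|$. Thus preimages of compact subsets of the open region stay compact, and the image, being open and closed in the connected region, is the whole region. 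Adding $\mathcal M$ gives the claimed equality.

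I expect properness to be the main obstacle: the image must not escape through, or accumulate at, any other part of the region. In particular, for space-like $h$ with $\tau\to\infty$ I need $c\to$ the lower boundary $\partial\A$ or $\Image w\to\infty$, and never an interior limit. I would first try to verify this from the monotonicity of $q_3$ in $\tau$ established in the proof of Proposition~\ref{prop-oblate-atset}, together with the explicit asymptotics of formula~\eqref{eq-hyperbolic-space-geodesic}.
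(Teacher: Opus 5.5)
Your route is viable but genuinely different from the paper's. The paper identifies the lower boundary of $\A_{\tmax}$ with the light-like surface of Proposition~\ref{prop-oblate-atset} and the upper boundary with the asymptotes $q_1=\pm q_2$ of the $\SO_{1,1}$-orbits. It uses the sign of $\partial q_3/\partial\tau$ at $\tau=\pi$ to show that points just below each point of $\mathcal M$ in the plane $\Real w=0$ are reached, and then spreads them along the orbits by the symmetry. It rules out points above the asymptote surface by a sign-change argument for $q_1$. It gives no separate argument that the region between the two surfaces is filled.

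Your inclusion $\subseteq$ uses the invariant $q_1^2-q_2^2=\sin^2\tau\,(1+\hhh^2)$, resp.\ $\sinh^2\tau\,(\hhh^2-1)$, together with $q_1>0$. This is a direct and cleaner replacement for the paper's third step. Your $\supseteq$, a proper local diffeomorphism between connected two-dimensional orbit spaces, is essentially the Hadamard-type properness argument that the paper only deploys later, in Theorem~\ref{th-oblate-cut}. It actually proves the filling.

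One small correction: $c<\pi-\arcsin\frac{|\Real w|}{\sqrt{1+|w|^2}}$ is equivalent to $q_1>|q_2|$ only for $c\geqslant\pi/2$. For $c<\pi/2$ the inequality is vacuous, and you need $\A\subseteq\A_0=\{c\geqslant\arctan|w|\}$ to get $q_1\geqslant|q_2|$ there.

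The price of your route is properness, which you leave open, and part of your list of ``ends'' of the parameter domain is wrong. Parametrize the orbit space of normal covectors by $(h_3,t)$ rather than $(\tau,\hhh)$; this is smooth across $\Kil(h)=0$, where $\hhh$ blows up.
\begin{itemize}
\item The normal covectors with $\Kil(h)=0$ (i.e.\ $h_3^2=-I_1/\mu$) give time-like trajectories in the \emph{interior} of the parameter domain. They do not tend to $\partial\A$.
\item Escape along a rotation orbit is quotiented out altogether. In the unreduced picture its image simply goes to infinity; it does not accumulate on $q_1=|q_2|$.
\end{itemize}

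The genuine ends are:
\begin{itemize}
\item $t\to0$, with limit $e$;
\item $t\to\tmax$ with $h_3$ converging inside the range $\Kil(h)<0$, with limit in $\mathcal M$;
\item $t\to\infty$ or $|h_3|\to\infty$.
\end{itemize}

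For the last case your own invariant suffices; monotonicity of $q_3$ is not needed. On the section $q_2=0$ one has $q_3^2\geqslant q_1^2-1$. A short case check, using $\tau\hhh=th_3/(2I_1)$ and, for $\Kil(h)>0$, the inequality $\hhh^2-1>(1+\mu)\hhh^2$, shows that the invariant tends to infinity, hence so does $|\Image w|$. There are two exceptional corners:
\begin{itemize}
\item $|h_3|\to\infty$ with $\tau$ bounded. Then $t\to0$, and the limit lies on a light-like trajectory with $\hhh=\pm1/\sqrt{-\mu}$, i.e.\ in $\partial\A$, or equals $e$.
\item $\tau\to\pi$ with $\hhh\to\pm\infty$. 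Then $|q_3|\to\infty$ directly from~\eqref{eq-hyperbolic-time-geodesic}.
\end{itemize}

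Finally, state explicitly three facts:
\begin{itemize}
\item The image of a local diffeomorphism is open, so it lies in $\interior\A$.
\item The target section, between the light-like curve and $c=\pi$, is connected.
\item $\partial\A$, including the lines $\Image w=0$, $c=\arctan|\Real w|$, is covered by abnormal trajectories with $\tmax=+\infty$.
\end{itemize}
With these points filled in, your argument is complete.
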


\begin{proof}
The lower boundary of this set is the surface swept by the light-like extremal trajectories as we proved in Proposition~\ref{prop-oblate-atset}.
The upper boundary is the surface swept by the asymptotes of the $\SO_{1,1}$-orbits, see Fig.~\ref{fig-orbits}.
Let us prove this fact.

First, for any $w \in i\R$ and any $\varepsilon > 0$ there exists a time-like extremal trajectory of the type~\eqref{eq-hyperbolic-time-geodesic}
coming to the point $(c, w)$, where $\pi - \varepsilon < c < \pi$.
Indeed, otherwise there exists $w \in i\R$ such that for time-like extremal trajectories of the type~\eqref{eq-hyperbolic-time-geodesic}
coming to the Maxwell set to different sides of the point $(\pi, w)$
the partial derivatives $\frac{\partial}{\partial \tau}|_{\tau = \pi} q_3$ have different signs.
But from~\eqref{eq-derivatives} we know that
$$
\frac{\partial q_3}{\partial \tau}(\pi) = -\bar{h_3}(1 + \mu)\cosh{(\pi\mu\bar{h}_3)} = -\sgn{\bar{h}_3}.
$$
Thus, $\bar{h}_3 = 0$ and $w = 0$. But in this case by~\eqref{eq-hyperbolic-time-geodesic} all time-like extremal trajectories lie in the plane $q_3 = 0$ and the required condition is satisfied.

Second, if for any point $(\pi, w) \in \mathcal{M}$ there exists an infinitely close point $(c, w) \in \A_{\tmax}$ such that $c < \pi$ and $w \in i\R$, then
due to the presence of the $\SO_{1,1}$-symmetry the set $\A_{\tmax}$ contains the whole $\SO_{1,1}$-orbit of this point.
Hence, the set $\A_{\tmax}$ contains any orbit infinitely close to the asymptotes.

Third, if some time-like extremal trajectory of the type~\eqref{eq-hyperbolic-time-geodesic} comes to a point that is upper than the surface of these asymptotes,
then the same argument implies that there exists a one-parameter family of time-like extremal trajectories such that there is an extremal trajectory with $c$-coordinate greater than $\pi$ for the time less than the first Maxwell time. It follows that $q_1$ changes the sign. Whence, at some time moment less than the first Maxwell time we obtain $q_1 = q_2 = 0$.
This contradicts with the definition of the first Maxwell time.

It remains to write an equation for the upper boundary of the set $\A_{\tmax}$.
In the Lie group $\SU_{1,1}$ in $(q_0,q_1,q_2,q_3)$-coordinates we have the equation $q_1 = \pm q_2$ for the asymptotic $\SO_{1,1}$-orbits.
If we lift this surface to the universal covering $G$ at a point we get $q_1 = \sin{c}\sqrt{1+|w|^2}$.
Hence, $c = \pi - \arcsin{\frac{|q_2|}{\sqrt{1+|w|^2}}}$.
\end{proof}

\begin{theorem}
\label{th-oblate-cut}
If $\mu < 0$, then the cut time is equal to
$$
\tcut(h) = \left\{
\begin{array}{rcl}
\frac{2\pi I_1}{|h|}, & \text{if} & \Kil{(h)} < 0,\\
+\infty, & \text{if} & \Kil{(h)} \geqslant 0.\\
\end{array}
\right.
$$
The cut locus is $\Cut = \{(\pi,w) \in G \, | \, w \in i\R \}$.
\end{theorem}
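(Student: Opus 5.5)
The proof has two halves. The upper bound $\tcut(h) \leqslant \tmax(h)$ comes from Maxwell points. The lower bound $\tcut(h) \geqslant \tmax(h)$ comes from a Hadamard-type global diffeomorphism argument for the exponential map, followed by an existence argument for longest arcs.

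\textbf{Upper bound.} By Proposition~\ref{prop-oblate-tmax}, for $\Kil(h)<0$ the point $\Gamma_h(2\pi I_1/|h|)$ lies in $\mathcal{M}$. Every hyperbolic rotation from $\mathrm{Sym}$ (Proposition~\ref{prop-sym}) fixes this point and maps $\Gamma_h$ to a distinct extremal trajectory of the same Lorentzian length. Trajectories with $h_2=0$ need a little care: the rotation still moves $h$ off itself. By~\cite[Prop.~2.1]{sachkov-didona} the arc is therefore not optimal for any $t_1 > \tmax(h)$, so $\tcut(h)\leqslant \tmax(h)$. For $\Kil(h)\geqslant 0$ there is nothing to prove.

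\textbf{Lower bound: the diffeomorphism.} Consider the domain
$$
D=\{(h,t)\mid H(h)=-\tfrac12,\ h_1<0,\ 0<t<\tmax(h)\}.
$$
I will show that $\Exp: D\to \interior{\A_{\tmax}}\setminus\{e\}$ is a diffeomorphism.
\begin{enumerate}
\item It is a local diffeomorphism, because by Theorem~\ref{th-oblate-conj} there are no conjugate points before $\tmax=\tconj$.
\item It is proper. As $t\to \tmax(h)$ the image tends to the upper boundary of $\A_{\tmax}$, namely the asymptotic $\SO_{1,1}$-orbits and $\mathcal M$, by Proposition~\ref{prop-oblate-Atmax}. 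As $h$ tends to the light-like cone, the image tends to $\partial\A$ by Proposition~\ref{prop-oblate-atset}. As $t\to0$ or $|h|\to\infty$ with $t$ bounded, the image tends to $e$ or the boundary.
\item Both $D$ and the target are connected and simply connected: both are homeomorphic to $\R^3$, or can be checked via the $\SO_{1,1}$-quotient.
\end{enumerate}
Hence the proper local diffeomorphism is a covering, and therefore a diffeomorphism. An alternative to the topological count is to use the $\SO_{1,1}$-symmetry to reduce to the two-dimensional section $\Real w=0$. There one verifies injectivity of $(\tau,\bar h_3)\mapsto(c,\Image w)$ directly, using the monotonicity of $q_3$ in $\bar h_3$ (the second factor of~\eqref{eq-det} is nonzero).

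\textbf{Lower bound: existence of longest arcs.} Existence of longest arcs in the interior of $\A$ below the Maxwell surface follows from a Filippov-type theorem. The needed compactness of the admissible trajectories comes from the bound $c\geqslant\arctan|w|$ of $\A_0$ together with global hyperbolicity of the anti-de\,Sitter comparison. By the Pontryagin maximum principle, a longest arc to a point $x\in\interior{\A_{\tmax}}$ is an extremal trajectory. It is time-like for interior points, since light-like trajectories sweep only $\partial\A$. Its arc must be of the form $\Exp(h,t)$ with $t\leqslant\tcut(h)\leqslant\tmax(h)$, so the arc lies in $D$. By injectivity on $D$ this extremal is unique, so it is optimal, giving $\tcut(h)\geqslant \tmax(h)$. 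Points of $\mathcal M$ are limits of such points, and closedness of optimality gives optimality up to $\tmax$ inclusive. For $\Kil(h)\geqslant0$ the same argument works for every $t$, since $\tmax=+\infty$.

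Finally, $\Cut=\{\Exp(h,\tcut(h))\}=\mathcal M$ by Proposition~\ref{prop-oblate-tmax}.

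\textbf{Main obstacle.} I expect the hardest part to be properness in the diffeomorphism step together with the existence of longest arcs. The existence needs an argument that the attainable set is closed and that no length escapes to infinity. I would try to handle this by bounding $c$ along admissible curves.
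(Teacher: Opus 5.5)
\textbf{Verdict: genuine gap.} Your upper bound via the hyperbolic-rotation Maxwell points and your proper-local-diffeomorphism argument for $\Exp\colon D\to\interior\A_{\tmax}$ follow the paper, with the same Hadamard-type argument and comparable rigor in the properness check. The gap is in passing from this diffeomorphism to optimality. Your route needs an a priori existence theorem for longest arcs. You justify the required compactness by ``global hyperbolicity of the anti-de\,Sitter comparison'', but anti-de\,Sitter space is not globally hyperbolic. In the coordinates $(c,w)$ it is conformal to the half $\{\theta<\pi/2\}$ of the Einstein static universe, with time $c$ and polar angle $\theta=\arctan|w|$. The causal diamond $J_0^+(e)\cap J_0^-(x)$ is therefore compact only when $c_x<\pi-\arctan|w_x|$.

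Points of $\interior\A_{\tmax}$ close to $(\pi,iy)\in\mathcal M$ with $y\neq0$ violate this. For such $x$, $C_0$-admissible curves from $e$ can run out to the conformal boundary, come back, and wait, so they are confined to no compact set. The inclusion $C\subset C_0$ thus gives no compactness exactly near the cut locus, which is where you need it. Whether the narrower oblate cone restores compactness is a nontrivial question the proposal leaves open; your ``main obstacle'' paragraph concedes this. Without existence, uniqueness of the extremal in $D$ reaching $x$ says nothing about its optimality.

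The paper avoids a priori existence altogether (Theorem~\ref{th-oblate-cut} together with Corollary~\ref{crl-oblate-exist}). Once $\Exp$ is a diffeomorphism onto $\interior\A_{\tmax}$, the flow-out $L=\{e^{t\vec H}h \mid 0<t<\tmax(h),\ H(h)=-\frac12\}$ is a Lagrangian submanifold projecting diffeomorphically onto $\interior\A_{\tmax}$. The field-of-extremals theorem \cite[Th.~17.2]{agrachev-sachkov} then gives optimality of each extremal in the field directly. Points of $\mathcal M$ are handled by lower semicontinuity of the Lorentzian distance, which matches your final closedness step. You should replace the Filippov step by this calibration argument.

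Separately, you never treat the light-like extremals ($H(h)=0$). These have $\Kil(h)\geqslant0$, so the statement asserts $\tcut(h)=+\infty$ for them too. The paper handles them first. By Proposition~\ref{prop-oblate-atset} they sweep $\partial\A$. By the geometric Pontryagin maximum principle, any admissible curve ending on $\partial\A$ is light-like. Hence no admissible curve to these points has positive length, and they are optimal for all time.
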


\begin{proof}
First, note that the light-like extremal trajectories are optimal to the infinity.
Indeed, if we assume by contradiction that there is an admissible curve coming to some point of a light-like extremal trajectory with non-zero Lorentzian length,
then since this curve comes to the boundary of the attainable set $\A$ (see~Propositions~\ref{prop-oblate-atset}),
due to the Pontryagin maximum principle in geometric form~\cite[Th.~12.1]{agrachev-sachkov} this curve must be a light-like extremal trajectory and we get a contradiction.

Consider the domain in the pre-image of the exponential map bounded by the first Maxwell time for symmetries
$$
\mathcal{U} = \left\{ (h,t) \in \g^* \times \R_+ \, \Bigm| \, H(h) = -\frac{1}{2}, \ 0 < t < \tmax(h) \right\}.
$$
It is sufficient to prove that the exponential map $\Exp : \mathcal{U} \rightarrow \interior{\A_{\tmax}}= \A_{\tmax} \setminus \partial\A_{\tmax}$ is a diffeomorphism,
where $\partial\A_{\tmax} = \partial\A \cup \mathcal{M}$, and $\A_{\tmax}$ is the set attainable by extremal trajectories by time not exceeding $\tmax$, and
$\mathcal{M} = \{ (c,w) \, | \, c = \pi, \, w \in i\R \}$ is the set of the Maxwell points.

We prove this using the Hadamard global diffeomorphism theorem~\cite{krantz-parks}, i.e., a smooth non-degenerate proper map of two connected and simply connected manifolds of the same dimensions is a diffeomorphism. Indeed, the manifolds $\mathcal{U}$ and $\interior{\A_{\tmax}}$ are three dimensional connected and simply connected.
Moreover, the smooth map $\Exp$ is non-degenerate on these domains, since according to Theorem~\ref{th-oblate-conj} and Proposition~\ref{prop-oblate-tmax} the inequality $\tmax(h) \leqslant \tconj(h)$ is satisfied. It remains to prove that this map is proper.

Let us prove that for any compact set $K \subset \interior{\A_{\tmax}}$ its pre-image $\Exp^{-1}{K}$ is compact as well.
Assume by contradiction that the set $\Exp^{-1}{K}$ is not compact. Then there exists a sequence of elements $(h^{(n)},t^{(n)})$ such that
(1) $h^{(n)}_1 \rightarrow -\infty$, or (2) $t^{(n)} \rightarrow 0$, or (3) $t^{(n)} \rightarrow \tmax(h^{(n)})$ when $n \rightarrow +\infty$.
But $K$ is compact, thus the sequence $\Exp(h^{(n)},t^{(n)})$ has a subsequence that converges to an element $q \in K$.
Passing to this subsequence we obtain that $q \in \partial\A$ in the case~(1), since $|h^{(n)}|$ is bounded, indeed,
$|h^{(n)}| < \frac{2\pi I_1}{t^{(n)}}$ and $t^{(n)}$ is separated from zero.
In the case~(2), we get $q = 0 \in \partial\A$.
This contradicts with the definition of the set $K$.

Consider now the case~(3). If there is a subsequence such that $\Kil{(h^{(n)})} < 0$, then $q \in \mathcal{M}$ and we get a contradiction.
Otherwise there exists a subsequence such that $\Kil(h^{(n)}) \geqslant 0$. In this case the corresponding sequence $\Exp{(h^{(n)},t^{(n)})}$ is unbounded,
but the set $K$ is compact. Hence, we get a contradiction.
\end{proof}

Fig.~\ref{fig-cut-front}--\ref{fig-cut-q3-0} presents extremal trajectories and the cut locus for $\bar{h}_3 \geqslant 0$
(there is the symmetry with respect to the plane $\Image{w} = 0$).
The thick solid line is the cut locus. The dashed curves are extremal trajectories that are optimal to the infinity.
The solid lines are extremal trajectories that lose their optimality at the time $t(h) = \frac{2\pi I_1}{|h|}$.
Fig.~\ref{fig-cut-front}~(left) corresponds to the factor by $\SO_{1,1}$-action.
Fig.~\ref{fig-cut-front}~(right) and Fig.~\ref{fig-cut-q3-0} shows the projection to the plane $\Image{w} = 0$.

\begin{figure}
\centering{
\minipage{0.5\textwidth}
  \centering{\includegraphics[width=\linewidth]{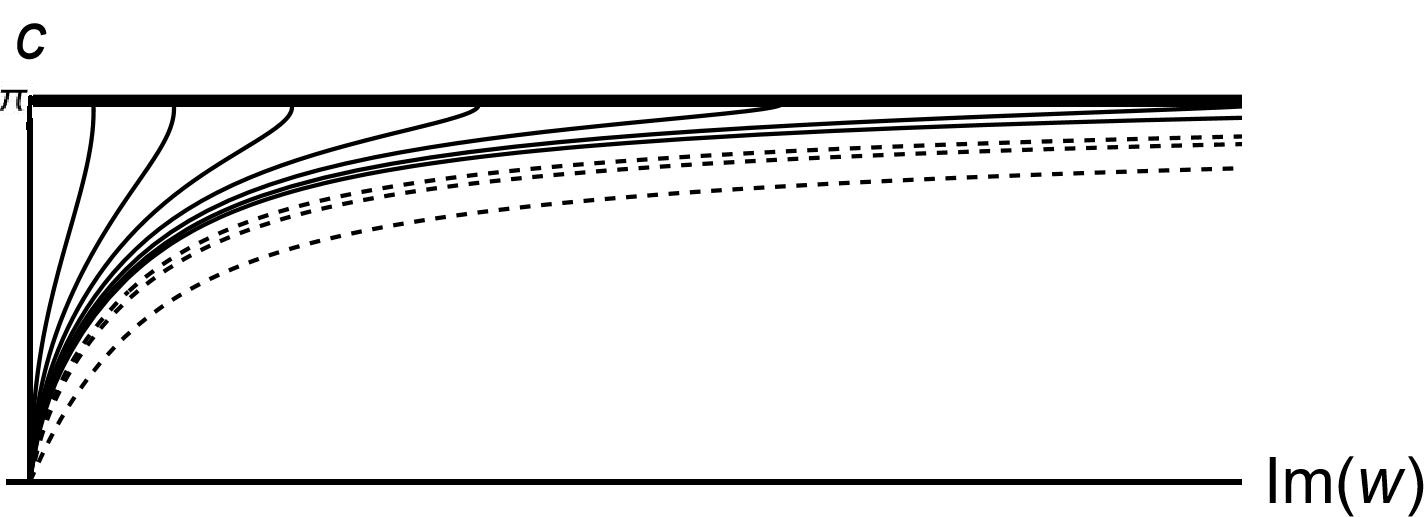} \\ $\SO_{1,1}$-factor}
\endminipage
\hfil
\minipage{0.25\textwidth}
  \centering{\includegraphics[width=\linewidth]{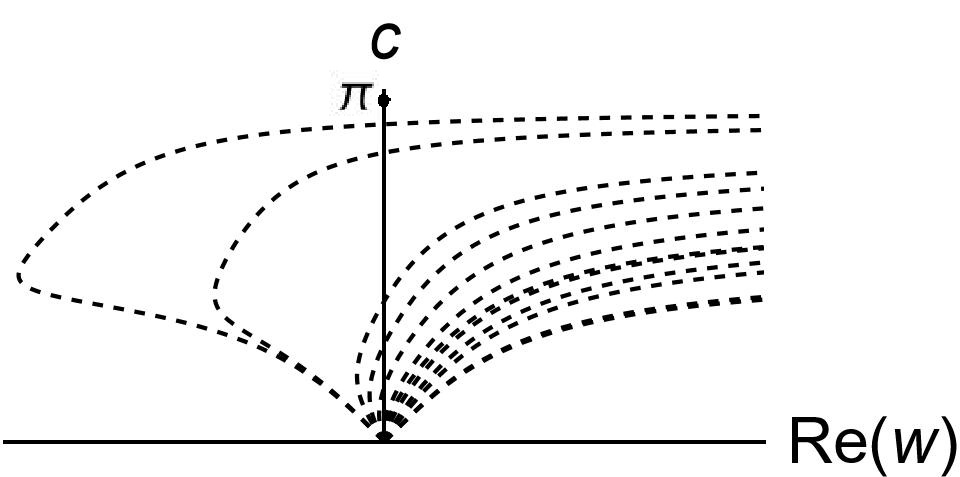} \\ The projection to the plane $\Image{w} = 0$}
\endminipage
}
\caption{\label{fig-cut-front}The extremal trajectories and the cut locus (the thick solid line) in the $\SO_{1,1}$-factor for $\bar{h}_3 \geqslant 0$ (left). The dashed lines are extremal trajectories that are optimal to the infinity. The projection of optimal to infinity extremal trajectories to the plane $\Image{w} = 0$ for $\bar{h}_3 = 3$ (right).}
\end{figure}

\begin{figure}
\centering{
\minipage{0.3\textwidth}
  \centering{\includegraphics[width=0.9\linewidth]{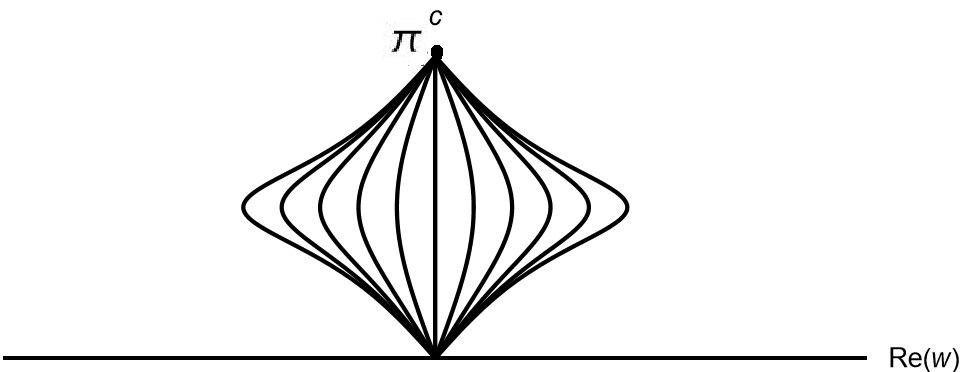} \\ $\bar{h}_3 = 0$}
\endminipage
\hfil
\minipage{0.3\textwidth}
  \centering{\includegraphics[width=0.9\linewidth]{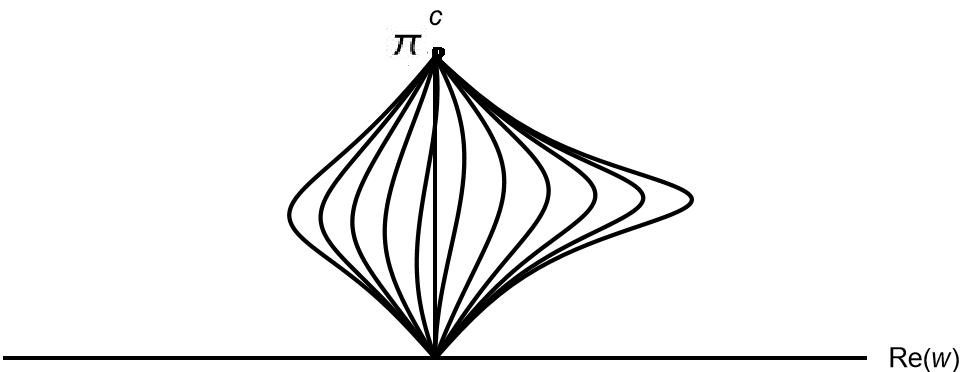} \\ $\bar{h}_3 = 0.2$}
\endminipage
\hfil
\minipage{0.3\textwidth}
  \centering{\includegraphics[width=0.9\linewidth]{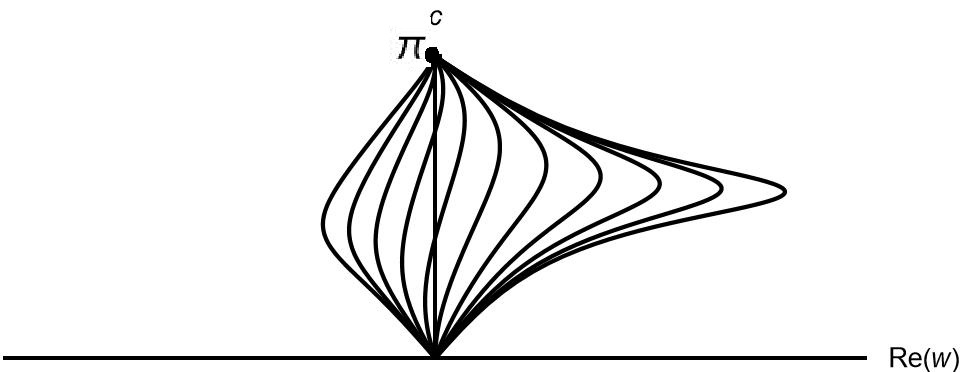} \\ $\bar{h}_3 = 0.4$}
\endminipage}
\caption{\label{fig-cut-q3-0}The extremal trajectories that lose their optimality for $\Kil{(h)} < 0$ and the cut locus in the projection to the plane $\Image{w} = 0$.}
\end{figure}

\begin{corollary}
\label{crl-oblate-exist}
If $\mu < 0$ and $q \in \A_{\tmax}$, then there exists the longest arc passing from the identity point $e$ to the point $q$.
\end{corollary}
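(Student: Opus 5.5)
The plan is to read the corollary off Theorem~\ref{th-oblate-cut} and the way the set $\A_{\tmax}$ is covered by extremal arcs. I split $q \in \A_{\tmax}$ into three kinds of points: $q = e$, points of $\partial\A$, and the remaining points of $\A_{\tmax}$.

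\textbf{Case $q = e$.} The constant curve is the longest arc. Indeed, every admissible curve starting at $e$ has $c$ strictly increasing, because $u_1 > 0$ and the future cone points upward. Hence no nontrivial admissible loop exists, and the only admissible curve to $e$ is trivial.

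\textbf{Case $q \in \partial\A$.} By Proposition~\ref{prop-oblate-atset}, $q$ lies on a light-like extremal trajectory. The first step of the proof of Theorem~\ref{th-oblate-cut} shows that these trajectories are optimal up to infinity. The argument is that any admissible curve reaching a boundary point of $\A$ must itself be light-like, by the geometric Pontryagin maximum principle~\cite[Th.~12.1]{agrachev-sachkov}, and so has zero length. Therefore the light-like arc ending at $q$ is a longest arc, of length zero.

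\textbf{Remaining points.} Here $q \in \interior{\A_{\tmax}}$ or $q \in \mathcal{M}$. By Proposition~\ref{prop-oblate-Atmax} and the definition of $\A_{\tmax}$, there exists $h$ with $H(h) = -\frac{1}{2}$ and $0 < t \leqslant \tmax(h)$ such that $q = \Exp(h,t)$. Theorem~\ref{th-oblate-cut} gives $\tcut(h) = \tmax(h)$. By Definition~\ref{def-cut}, the arc $\Gamma_h|_{[0,t]}$ is then optimal, since $t \leqslant \tcut(h)$, so it is a longest arc from $e$ to $q$. On $\mathcal{M}$ we have $t = \tmax(h)$ exactly, and optimality still holds there because the definition includes the endpoint $t_1 = \tcut(h)$.

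The main obstacle is making sure the arcs up to $\tmax$ are optimal among \emph{all} admissible curves, and not only among extremals. In the proof of Theorem~\ref{th-oblate-cut} this step relies implicitly on an existence argument. If a longest arc to $q$ exists, it is an extremal arc with $t \leqslant \tcut$, and the diffeomorphism $\Exp : \mathcal{U} \to \interior{\A_{\tmax}}$ makes that arc unique, hence equal to $\Gamma_h$.

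So I would make existence explicit using the Filippov-type theorem for Lorentzian problems. The time-like curves from $e$ into a compact region are uniformly bounded, because $\A \subset \A_0$ and $c$ increases, so $c$ is bounded above by the $c$-coordinate of the endpoint. The control set $C$ is a convex cone. Strong causality holds because there are no admissible loops. Together these give upper semicontinuity of length and the existence of a maximizer whenever $q$ lies in the attainable set. Combined with the uniqueness of the extremal reaching $q$, this maximizer is $\Gamma_h$, which completes the argument.
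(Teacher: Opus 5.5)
\textbf{There is a genuine gap, in exactly the step you flagged as the main obstacle.}

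\emph{The existence claim is false as stated.} Your Filippov-type argument, as written, yields a longest arc for \emph{every} $q\in\A$. Take $q=(c,0)$ with $c>\pi$. It lies in $\A$ via the constant control $(1,0,0)$. Yet no extremal reaching it can be optimal:
\begin{itemize}
\item Along every extremal with $\Kil(h)\geqslant 0$ (this includes all light-like ones) one has $q_1>0$ for all $t>0$, so $c<\pi$.
\item Along an extremal with $\Kil(h)<0$, the value $c=\pi$ is first reached at $\tau=\pi$. There the whole family of hyperbolic rotations of $h$ meets.
\end{itemize}
So any extremal reaching $(c,0)$ has already passed a Maxwell (and conjugate) point, and no longest arc to this point exists.

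\emph{Where the argument breaks.} The failing step is compactness. The bound $0\leqslant c\leqslant c(q)$ says nothing about $|w|$. Admissible curves from $e$ to $q$ may run out to $|w|\to\infty$ and come back, as in the universal cover of anti-de~Sitter space, which is strongly causal but not globally hyperbolic. Convexity of $C$ does not give compactness of the causal diamond $\A\cap q\A^{-1}$. Neither does the absence of admissible loops, which is only causality, not even strong causality. Nothing in your sketch singles out the points of $\A_{\tmax}$ for which such a bound might hold.

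\emph{Your first move does not close the gap either.} Citing $\tcut=\tmax$ from Theorem~\ref{th-oblate-cut} is formally allowed. But that theorem's proof only shows that $\Exp:\mathcal{U}\to\interior{\A_{\tmax}}$ is a diffeomorphism. Optimality among all admissible curves is precisely what this corollary has to supply, as you yourself noticed.

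\textbf{The missing idea is a sufficiency argument that needs no a priori existence.}
\begin{itemize}
\item \emph{Interior points.} The paper uses the method of fields of extremals~\cite[Th.~17.2]{agrachev-sachkov}. The set $L=\{e^{t\vec{H}}h \mid 0<t<\tmax(h),\ H(h)=-\frac{1}{2}\}$ is a Lagrangian submanifold: the symplectic form vanishes on $T^*_eG$, and $\vec{H}$ is symplectically orthogonal to the level set of $H$. By the Hadamard argument, $L$ projects diffeomorphically onto $\interior{\A_{\tmax}}$. Hence the unique extremal to each interior point is optimal among all admissible curves.
\item \emph{Points of $\partial\A$.} Your treatment coincides with the paper's.
\item \emph{Points of $\mathcal{M}$.} Here the extremal is not unique, contrary to what your last sentence uses. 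However, all extremals reaching $(\pi,w)$ have the same length $t_w$. Approximate $(\pi,w)$ by interior points $q_n$; there $d(e,q_n)$ is the length of the optimal extremal and tends to $t_w$. Lower semicontinuity of the Lorentzian distance~\cite[Lem.~4.4]{beem-ehrlich-easley} then gives $d(e,(\pi,w))\leqslant t_w$. So equality holds, and each of these extremals is a longest arc.
\end{itemize}
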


\begin{proof}
Let us show that for any point of the set $\interior{\A_{\tmax}}$ the longest arc exists.
This follows from the fact that any point of this set can be reached by the unique extremal trajectory by Theorem~\ref{th-oblate-cut}.
The proof is based on the fields of extremals technic, see~\cite[Th.~17.2]{agrachev-sachkov}.
More precisely, consider the submanifold $L = \{e^{t\vec{H}}h \, | \, 0 < t < \tmax(h), \, h \in \g^*, H(h) = -\frac{1}{2} \} \subset T^*G$.
The dimension of this submanifold equals $\dim{G} = 3$ and the symplectic structure vanishes on it,
because it vanishes on the fiber of the cotangent bundle $T^*_{e}G = \g^*$ and the Hamiltonian vector field $\vec{H}$ is symplectically orthogonal to the level set of the Hamiltonian $H$. Hence, the submanifold $L$ is a Lagrange submanifold.
We proved (see the proof of Theorem~\ref{th-oblate-cut}) that the Lagrange manifold $L$ is diffeomorphic to $\interior{\A_{\tmax}}$.
Thus, for any point of the set $\interior{\A_{\tmax}}$ the unique extremal trajectory coming to this point is optimal.

The boundary of the set $\A_{\tmax}$ consists of two components $\partial\A_{\tmax} = \partial\A \cup \mathcal{M}$, i.e.,
the boundary of the whole attainable set and the Maxwell set $\mathcal{M}$.

Consider the first part of the boundary of the set $\A_{\tmax}$. This part is swept by the light-like extremal trajectories.
By Theorem~\ref{th-oblate-cut} the light-like extremal trajectories are optimal.

Finally, consider the second part of the boundary of the set $\A_{\tmax}$.
We prove that any extremal trajectory coming to a point $q = (\pi,w) \in \mathcal{M}$ is optimal, where $w \in i\R$.
Notice that there is a one-parametric family of such extremal trajectories.
By formulas~\eqref{eq-hyperbolic-time-geodesic} these extremal trajectories correspond to the following values of initial covectors
$$
\frac{h_1^2}{I_1} - \frac{h_2^2}{I_1} - \frac{h_3^2}{I_3} = 1, \qquad
\bar{h}_1^2 - \bar{h}_2^2 - \bar{h}_3^2 = 1, \qquad
\bar{h}_3 = -\frac{1}{\pi\mu}\arcsinh{w} = \const.
$$
It follows that for such extremal trajectories $|h| = \sqrt{\frac{I_1}{1 - \mu\bar{h}_3^2}} = \const$ and the corresponding Maxwell time
$t_w = \frac{2\pi I_1}{|h|} = \const$.

We use the semi-continuity of the Lorentzian distance, see~\cite[Lem.~4.4]{beem-ehrlich-easley}.
Namely, assume that $p,q \in M$ are two points of a Lorentzian manifold $M$ and there are two sequences of points $p_n \rightarrow p$, $q_n \rightarrow q$.
We will denote by $d(p,q)$ the Lorentzian distance from the point $p$ to the point $q$.
If $d(p,q) < +\infty$, then $d(p,q) \leqslant \liminf\limits_{n \rightarrow +\infty}{d(p_n,q_n)}$.
If $d(p,q) = +\infty$, then $d(p,q) \leqslant \lim\limits_{n \rightarrow +\infty}{d(p_n,q_n)} = +\infty$.

Let us take $p_n = p = (0,0)$ and $q_n \rightarrow q = (\pi,w)$, $q_n \in \interior{\A_{\tmax}}$.
It follows that $d(p,q) \leqslant \lim\limits_{n \rightarrow +\infty}{d(p,q_n)} = t_w$.
But since all extremal trajectories coming to the point $q$ have the Lorentzian length $t_w$, then $t_w \leqslant d(p,q)$.
Hence, $d(p,q) = t_w$ and any such extremal trajectory is optimal.
\end{proof}

\subsection{\label{sec-sub-Lorentzian-case}The sub-Lorentzian case}

In this section, we consider the sub-Lorentzian case that corresponds to the control set
$$
C_{-1} = \{u_1e_1 + u_2e_2 \in \g \, | \, I_1u_1^2 - I_1u_2 \geqslant 0, \ u_1 > 0\}.
$$
The corresponding optimal control problem reads as
$$
x(0) = e, \ x(t_1) = x_1, \qquad \dot{x}(t) = L_{x(t) *}u(t), \qquad \int\limits_0^{t_1}{\sqrt{I_1u_1^2(t) - I_2u_2^2(t)}\, dt} \rightarrow \max,
$$
where $x \in \Lip{([0,t_1],G)}$, $u \in L^{\infty}([0,t_1], C_{-1})$ and $x_1 \in G$, $t_1 > 0$ are fixed.
We obtain the sub-Lorentzian problem from Lorentzian one when $I_3 = +\infty$, or, equivalently, $\mu = -1$.

\begin{theorem}
\label{th-sub-Lorentzian}
\emph{(1)} The sub-Lorentzian normal extremal trajectories are products of two one-parameter subgroups as in Proposition~\emph{\ref{prop-geodesics}} with $\mu = -1$.\\
\emph{(2)} The sub-Lorentzian normal extremal trajectories have equations~\emph{\eqref{eq-hyperbolic-time-geodesic}}--\emph{\eqref{eq-hyperbolic-space-geodesic}} with $\mu = -1$.\\
\emph{(3)} The sub-Lorentzian abnormal extremal trajectories have at most one switch of light-like control.\\
\emph{(4)} \emph{\cite[Prop.~7\,(b)]{grong-vasiliev}} The sub-Lorentzian attainable set is\footnote{This formula coincides with the formula from paper~\cite{grong-vasiliev} up to the change $\Real{w} \ \leftrightarrow \ \Image{w}$.}
$$
\A^{sL} = \left\{ (c,w) \in G \, \Bigm| \, |\tan{c}| \geqslant \frac{\sqrt{(\Real{w})^2 + 2|\Image{w}|}}{1 - |\Image{w}|} \right\}.
$$
\emph{(5)} The sub-Lorentzian first conjugate time equals
$$
\tconj(h) = \left\{
\begin{array}{rcl}
\frac{2\pi I_1}{|h|}, & \text{if} & \Kil{(h)} < 0,\\
+\infty, & \text{if} & \Kil{(h)} \geqslant 0.\\
\end{array}
\right.
$$
\emph{(6)} The sub-Lorentzian first caustic is $\Conj = \{(\pi, w) \in G \, | \, w \in i\R \}$.\\
\emph{(7)} The sub-Lorentzian cut time equals the first conjugate time $\tcut(h) = \tconj(h)$.\\
\emph{(8)} The sub-Lorentzian cut locus is $\Cut = \{(\pi, w) \in G \, | \, w \in i\R \}$.\\
\emph{(9)} The sub-Lorentzian longest arcs exist for the final points in the set
$$
\A_{\tmax}^{sL}
= \left( \A^{sL} \cap \left\{ (c,w) \Bigm|
c < \pi - \arcsin{\frac{|\Real{w}|}{\sqrt{1+|w|^2}}}
\right\} \right) \cup \Cut.
$$
\end{theorem}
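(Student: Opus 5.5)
The plan is to repeat the Lorentzian arguments with $\mu=-1$, i.e.\ $I_3=+\infty$, and to treat the abnormal case separately, since there the analogy with light-like geodesics breaks down.

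\textbf{Items (1) and (2).} For the normal case, the maximized Hamiltonian for the control set $C_{-1}$ is $H=\frac{1}{2I_1}(-h_1^2+h_2^2)$. This is the formal limit $I_3\to+\infty$ of the Lorentzian Hamiltonian. The Hamiltonian system is then \eqref{eq-hamiltonian-system} with $\mu=-1$ and the term $h_3/I_3$ removed from the velocity. The verification in the proof of Proposition~\ref{prop-geodesics} goes through verbatim: the $e_3$-component of the velocity is $\frac{1}{I_1}h_3+\frac{\mu h_3}{I_1}=0$ exactly when $\mu=-1$. This gives (1). Item (2) then follows by the computation of Proposition~\ref{prop-geodesics-in-coordinates} with $\mu=-1$, using \eqref{eq-group-exp}.

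\textbf{Item (3).} Abnormal extremals satisfy $\nu=0$, so maximizing $u_1h_1+u_2h_2$ over the cone $C_{-1}$ gives $h_1^2=h_2^2$ with $h_1<0$ (or $h_1=h_2=0$). When $h_1^2=h_2^2$, the maximizing control is the light-like ray $u_2/u_1=\mp1$, determined by the sign of $h_2$. The Hamiltonian system for $\nu=0$ reads $\dot h_1=-u_2h_3$, $\dot h_2=-u_1h_3$ (up to sign conventions from \eqref{eq-commutators}), together with $\dot h_3=u_1h_2-u_2h_1$. On the line $h_2=\pm h_1$, taking the matching light-like control gives $\dot h_3=0$. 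Hence $h_3$ is constant, and $h_1,h_2$ move monotonically along that line. A switch can only occur when $h_1=h_2=0$, where the control is not determined. I would show that this set is crossed at most once, because $h$ evolves linearly in time with constant $h_3$. This gives at most one switch, and the trajectories are concatenations of at most two one-parameter subgroups $\exp(t(e_1\pm e_2))$.

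\textbf{Item (4).} This is quoted from~\cite{grong-vasiliev}. I would still check it as a consistency test: concatenations of $\exp(s(e_1+e_2))\exp(r(e_1-e_2))$ and of the reversed order sweep the stated surface. The argument of Proposition~\ref{prop-oblate-atset}, via the lower boundedness of $\A_0$, shows that this surface is $\partial\A^{sL}$.

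\textbf{Items (5) and (6).} Formula \eqref{eq-conj-full} with $\mu=-1$ becomes $s[-\tau(\delta-\bar h_3^2)c+(\delta-\bar h_3^2)s]=s(\delta-\bar h_3^2)(s-\tau c)$. For $\delta=-1$, the first root of $\tan\tau=\tau$ lies in $(\pi,3\pi/2)$. For $\delta=1$, the equation $\tanh\tau=\tau$ has no positive root, and $\delta-\bar h_3^2\ne0$ by Lemma~\ref{lem-h3}. The Maslov-index homotopy argument for $\Kil(h)=0$ is unchanged, and the caustic is computed as before.

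\textbf{Items (7)--(9).} Here I would rerun Propositions~\ref{prop-oblate-tmax} and~\ref{prop-oblate-Atmax}, Theorem~\ref{th-oblate-cut} and Corollary~\ref{crl-oblate-exist} with $\mu=-1$. In those proofs the strict inequality $\mu>-1$ was used in two places: the comparison \eqref{eq-t-conj-coeff1}, and the comparison of slopes $\mu\bar h_3>-\bar h_3$ in the $t_3$ computation. At $\mu=-1$ both become equalities. For $t_3$, the equation $\tanh(-\tau\bar h_3)=-\bar h_3\tan\tau$ still has its first root beyond $\pi$, by convexity of $\tan$ versus concavity of $\tanh$.

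The \textbf{main obstacle} is the boundary part of the Hadamard argument. The boundary $\partial\A^{sL}$ is now swept by abnormal broken trajectories, not by light-like geodesics. So properness in case (1), $h_1^{(n)}\to-\infty$, must be redone. I would prove that limits of normal extremals with $|h|$ bounded and $h_1\to-\infty$ (so $\bar h_3$ is unbounded) land on $\partial\A^{sL}$, using the explicit formulas. Optimality of the abnormal boundary curves then follows from the geometric Pontryagin maximum principle as in Theorem~\ref{th-oblate-cut}, and the Maxwell points are handled by the same semicontinuity argument.
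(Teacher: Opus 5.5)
Your route is the paper's own: (1)--(2) by specializing Proposition~\ref{prop-geodesics} to $\mu=-1$, (3) by the phase portrait on $H=0$, (4) by the lower-boundedness argument, (5)--(6) from \eqref{eq-conj-full}, and (7)--(9) by rerunning Theorem~\ref{th-oblate-cut} and Corollary~\ref{crl-oblate-exist}, which the paper does in one sentence. You are right that properness in the Hadamard argument does not transfer verbatim. However, your plan for it misses the actual source of trouble, so the crux of (7)--(9) is still open in your proposal; the paper's case analysis does not cover it either.

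For $\mu=-1$ the normal initial covectors form the cylinder $\{h_1^2-h_2^2=I_1,\ h_1<0\}\times\R_{h_3}$, and $\Kil(h)=h_3^2-I_1$. So $|h|$ depends on $h_3$ alone. In particular $\bar h_3$ is unbounded only when $|h|\to0$, whatever $h_1$ does, so your parenthetical ``so $\bar h_3$ is unbounded'' is not right.

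More importantly, the escape directions change. For $\mu>-1$ the relation $h_1^2=I_1+h_2^2+(1+\mu)h_3^2$ makes $h_1\to-\infty$ capture every escape to infinity. For $\mu=-1$ one can also have $|h_3|\to\infty$ with $h_1$ bounded. In that direction $\Kil(h)>0$ and $\tmax=+\infty$, so the bound $|h^{(n)}|<2\pi I_1/t^{(n)}$ is unavailable. Also $|h|\to\infty$ (not bounded, as you assume), $\bar h_3\to\pm1$, and $t^{(n)}\to0$ no longer forces the endpoint to tend to $e$.

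This is exactly where the broken boundary points come from. Write $-h_1e_1+h_2e_2=\alpha(e_1+e_2)+\beta(e_1-e_2)$ with $\alpha\beta=I_1/4$. Then the normal control is
\[
u(t)=\frac{1}{I_1}\Bigl(\alpha e^{th_3/I_1}(e_1+e_2)+\beta e^{-th_3/I_1}(e_1-e_2)\Bigr).
\]
Take $h_3\to+\infty$, $\beta\sim Bh_3$ and $t\approx\frac{I_1}{h_3}\log\frac{4ABh_3^2}{I_1}\to0$. Then the endpoints converge to $\exp(B(e_1-e_2))\exp(A(e_1+e_2))$. Properness requires this computation in general: every convergent sequence of endpoints with $|h_3^{(n)}|\to\infty$, with or without $h_1^{(n)}\to-\infty$, must have as limit a light-like concatenation with at most one switch, hence a point of $\partial\A^{sL}$.

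Two smaller points.

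First, your list of places where $\mu>-1$ is used strictly is incomplete. Proposition~\ref{prop-oblate-Atmax} uses $\partial_\tau q_3(\pi)=-\bar h_3(1+\mu)\cosh(\pi\mu\bar h_3)\neq0$, which vanishes identically at $\mu=-1$. It can be replaced by $\partial_\tau q_3=(1+\bar h_3^2)\sin\tau\sinh(\tau\bar h_3)$, which has the sign of $\bar h_3$ on $(0,\pi)$.

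Second, in (3) the signs are not a matter of convention. Take your equations $\dot h_1=-u_2h_3$, $\dot h_2=-u_1h_3$, $\dot h_3=u_1h_2-u_2h_1$ on the ray $h_1=-h_2<0$ with $u_2=u_1$. You get $\dot h_1=\dot h_2$ and $\dot h_3=2u_1h_2\neq0$, so $h$ would leave the cone $H=0$. The paper's displayed abnormal system has a similar slip but simply asserts $\dot h_3=0$. The convention $\{h_i,h_j\}=\langle[e_i,e_j],\cdot\rangle$, which is the one producing \eqref{eq-hamiltonian-system}, gives $\dot h_1=-u_2h_3$, $\dot h_2=u_1h_3$, $\dot h_3=-u_1h_2-u_2h_1$. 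This vanishes on $h_1=-|h_2|$ with $u_2=\sgn(h_2)u_1$. Then $h_2$ is strictly monotone with the sign of $h_3$ (not linear in $t$, since $u_1$ is only a positive $L^\infty$ function), which gives at most one passage through $h_1=h_2=0$.
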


\begin{proof}
(1) The maximized Hamiltonian in the sub-Lorentzian case equals $H = \frac{1}{2}\left(-\frac{h_1^2}{I_1} + \frac{h_2^2}{I_2}\right)$.
Writing the Hamiltonian system $\dot{h}_i = \{H, h_i\}$, $i=1,2,3$ for normal extremal trajectories, we obtain
$$
\begin{array}{rcl}
\dot{h}_1(t) & = & -\frac{1}{I_1} h_2(t) h_3(t), \\
\dot{h}_2(t) & = & -\frac{1}{I_1} h_1(t) h_3(t), \\
\dot{h}_3(t) & = & 0, \\
\dot{x}(t) & = & L_{x(t)} \left( -\frac{h_1(t)}{I_1}e_1 + \frac{h_2(t)}{I_1}e_2 \right). \\
\end{array}
$$
This system coincides with~\eqref{eq-hamiltonian-system} when $\mu = -1$.
The same computations as in the proof of Proposition~\ref{prop-geodesics} give a representation of an extremal trajectory as a product of two one-parametric subgroups for $\mu = -1$.

(2) The previous argument implies the equations of normal extremal trajectories in coordinates.

(3) Let us study abnormal extremal trajectories. The corresponding Hamiltonian system reads as
$$
\begin{array}{rcl}
\dot{h}_1(t) & = & -\frac{1}{I_1} u_2(t) h_3(t), \\
\dot{h}_2(t) & = & -\frac{1}{I_1} u_1(t) h_3(t), \\
\dot{h}_3(t) & = & 0, \\
\dot{x}(t) & = & L_{x(t)} \left( u_1(t)e_1 + u_2(t)e_2 \right), \\
\end{array}
$$
where $H(h(t)) = 0$, $h_1(t) < 0$.
Moreover, the maximum condition implies that if $h_1^2(t) + h_2^2(t) \neq 0$, then the corresponding control $(u_1(t), u_2(t))$ is light-like, i.e.,
$u_1^2(t) + u_2^2(t) = 0$, $u_1(t) > 0$, and $\sgn{u_2(t)} = \sgn{h_2(t)}$.
If $h_1(t) = h_2(t) = 0$, then $h_3(t) \neq 0$ due to the non-triviality condition. Hence, $u_1(t) = u_2(t) = 0$.
Since the phase portrait of the abnormal Hamiltonian system looks like Fig.~\ref{fig-abnormal},
the abnormal extremal trajectories are concatenations of arcs of light-like extremal trajectories.
Moreover, there is at most one switching.

\begin{figure}
\centering{\includegraphics[width=0.4\linewidth]{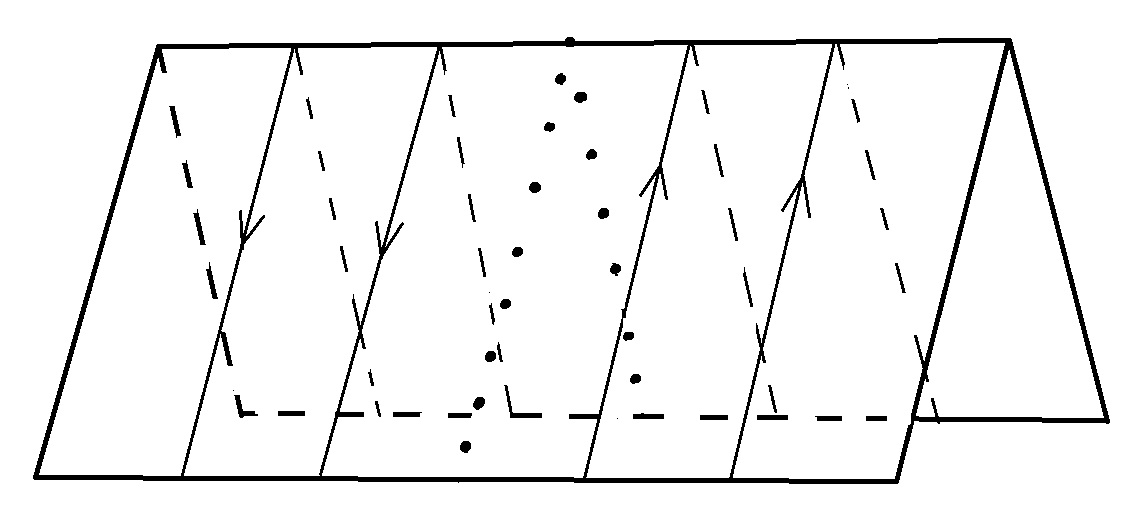}}
\caption{\label{fig-abnormal}The subsystem for covectors of the abnormal Hamiltonian system on the level surface $H(h) = 0$.}
\end{figure}

(4) Let us find the surface swept by the abnormal extremal trajectories. Since the abnormal extremal trajectories are concatenations of at most two arcs of light-like extremal trajectories,
the point of an abnormal extremal trajectory is a result of the multiplication~\eqref{eq-mult}, where for $i=1,2$ we have
$$
\tan{c_i} = |w_i|, \qquad e^{ic_1} = \cos{c_i} + i \sin{c_i}, \qquad
\cos{c_i} = \frac{1}{\sqrt{1+w_i^2}}, \qquad \sin{c_i} = \frac{|w_i|}{\sqrt{1+w_i^2}},
$$
where the signs of $w_1$ and $w_2$ are opposite.
From the second equation of the multiplication rule~\eqref{eq-mult} we get
$$
\begin{array}{rcccl}
\Real{w} & = & w_2\sqrt{1+w_1^2}\cos{c_1} + w_1\sqrt{1+w_2^2}\cos{c_2} & = & w_1 + w_2,\\
\Image{w} & = & w_2\sqrt{1+w_1^2}\sin{c_1} - w_1\sqrt{1+w_2^2}\sin{c_2} & = & \pm 2w_1 w_2 \quad \Rightarrow \quad |\Image{w}| = -2w_1w_2.\\
\end{array}
$$
It remains to calculate the first formula of the multiplication rule~\eqref{eq-mult} using the equality
$$
\tan{(\alpha + \beta + \gamma)} = \frac{\tan{\alpha} + \tan{\beta} + \tan{\gamma} - \tan{\alpha}\tan{\beta}\tan{\gamma}}{1 - \tan{\alpha}\tan{\beta} - \tan{\beta}\tan{\gamma} - \tan{\gamma}\tan{\alpha}}.
$$
We obtain
$$
\tan{c} = \frac{|w_1 - w_2|}{|1 + 2w_1w_2|} = \frac{\sqrt{(w_1-w_2)^2}} {|1+2w_1w_2|} = \frac{\sqrt{(w_1 + w_2)^2 - 4w_1w_2}}{|1 + 2w_1w_2|} =
\frac{\sqrt{(\Real{w})^2 + 2|\Image{w}|}}{|1 - |\Image{w}||}.
$$
We claim that the boundary of the attainable set is swept by the abnormal extremal trajectories. Indeed, the Pontryagin maximum principle implies that if a point lies on the boundary of the attainable set, then there exists an abnormal extremal trajectory coming to this point. Conversely, let us prove that any point $(c_0,w_0)$ of an abnormal extremal trajectory is a boundary point. Assume by contradiction that it is not a boundary point.
Note that since for the control set we have the following inclusion $C_{-1} \subset C_0$, the attainable set for the control set $C_{-1}$ is contained in the attainable set for the control set $C_0$, i.e., the well known attainable set for the Lorentzian symmetric case. This attainable set is bounded from below~\cite[Prop.~7\,(a)]{grong-vasiliev},
so our attainable set is bounded from below as well.
By our assumption the point $(c_0,w_0)$ lies inside the vertical section of the attainable set $\A_{w_0} = \{(c,w) \in \A \, | \, w = w_0\}$.
Since $\A$ is bounded from below, then $\inf{\A_{w_0}} > -\infty$. Hence, $(\inf{\A_{w_0}}, w_0) \in \partial\A$.
But there is no any abnormal extremal trajectory coming to this boundary point. We get a contradiction.

(5) The computations of the first conjugate time in the proof of Theorem~\ref{th-oblate-conj} are still valid, including the inequalities~\eqref{eq-t-conj-coeff0}, \eqref{eq-s-conj-coeff0}, but instead of inequalities~\eqref{eq-t-conj-coeff1}, \eqref{eq-s-conj-coeff1} we obtain equalities, but this does not affect to the first conjugate time.

(6) The previous argument immediately implies that the sub-Lorentzian first caustic coincides with the Lorentzian first caustic in the oblate case.

(7)--(9) The proof is the same as the proof of Theorem~\ref{th-oblate-cut} and Corollary~\ref{crl-oblate-exist}.
\end{proof}

\begin{remark}
\label{rem-non-convergence}
For the axisymmetric one-parameter series of Riemannian problems on the Lie group $\SU_{1,1}$ studied in paper~\cite{podobryaev-sachkov} there is a convergence to the sub-Riemannian one studied in papers~\cite{berestovskii-zubareva,boscain-rossi}. More precisely, the Riemannian cut time and the cut locus depending on the parameter converge to the sub-Riemannian cut time and the locus~\cite[Th.~5]{podobryaev-sachkov}.

From Theorem~\ref{th-sub-Lorentzian} we see that for the one-parameter series of Lorentzian problems such that the corresponding control set and the cost functional estimate the sub-Lorentzian ones the situation is different.
Despite of the convergence of the Lorentzian normal extremal trajectories to the sub-Lorentzian ones when $\mu \rightarrow -1$,
the Lorentzian attainable sets converge to the set defined by the inequality
$$
|\tan{c}| \geqslant \frac{|\Real{w}|}{\sqrt{1+(\Image{w})^2}}
$$
which contains the sub-Lorentzian attainable set, since
$$
\frac{\sqrt{(\Real{w})^2 + 2|\Image{w}|}}{|1 - |\Image{w}||} \geqslant \frac{|\Real{w}|}{\sqrt{1+(\Image{w})^2}}
$$
because
$$
\left((\Real{w})^2 + 2|\Image{w}|\right) \left(1+(\Image{w})^2\right) \geqslant (\Real{w})^2 \left(1 - |\Image{w}|\right)^2.
$$
Moreover, the first conjugate time, the first caustic, the cut time and the cut locus do not depend on the parameter $\mu$.
\end{remark}

\section{\label{sec-prolate}The prolate case}

Consider now the prolate case $\mu > 0$.

\subsection{\label{sec-prolate-atset}The attainable set}

\begin{theorem}
\label{th-prolate-atset}
If $\mu > 0$, then the attainable set coincides with the whole group $\A = G$.
\end{theorem}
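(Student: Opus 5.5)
Since $\A$ is a sub-semigroup of $G$ ($x\in\A$ and $y\in\A$ imply $xy\in\A$ by concatenating admissible curves and using left-invariance), I will show that $\A$ contains a neighbourhood of $e$ together with enough group elements to generate all of $G$. The key observation is that for $\mu>0$ the control cone $C$ contains the $e_3$-direction in its closure in a strong sense: $C=\{I_1u_1^2-I_1u_2^2-I_3u_3^2\geqslant 0,\ u_1>0\}$ with $I_3<I_1$. Hence $C$ strictly contains the Killing cone $C_0$ restricted to $u_1>0$. More importantly, it contains vectors $u=u_1e_1+u_3e_3$ with $|u_3|>u_1$, i.e.\ vectors with $\Kil>0$ (hyperbolic elements), namely those with $1<|u_3|/u_1\leqslant\sqrt{I_1/I_3}=\sqrt{1+\mu}$. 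The plan is to exploit these hyperbolic one-parameter subgroups to produce admissible loops and then controllability.

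\emph{Step 1: an admissible loop through $e$.} Take a constant control $u\in\interior C$ with $\Kil(u)>0$, for example $u=e_1+\alpha e_3$ with $1<\alpha<\sqrt{1+\mu}$. The trajectory is $\exp(tu)$, which by \eqref{eq-group-exp} never returns to $e$. So instead I combine two such elements, or compare against the time-like case. Concretely, the elliptic direction $e_1\in\interior C$ generates the one-parameter subgroup $t\mapsto(t/2,0)$, i.e.\ it only increases $c$. I therefore need admissible motions that decrease $c$.

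For this I use the conjugation trick. The $\SO_{1,1}$-action $\Ad\exp(se_3)$ preserves $C$ and maps $e_1$ to $\cosh s\, e_1 \pm \sinh s\, e_2$. Since $C$ is preserved, this does not escape the Killing cone. The genuinely new ingredient is the hyperbolic direction $v=e_1+\alpha e_3$. The element $\exp(tv)$ lies in a hyperbolic one-parameter subgroup, and in the covering $G$ its $c$-coordinate stays bounded, because $\arg(q_0+iq_1)$ with $q_0=\cosh(\cdot)>0$ is bounded. Products of such hyperbolic elements along different (conjugated) axes can yield elements with negative $c$. I would compute, via \eqref{eq-mult}, a product $\exp(t v)\exp(t v')$ with $v'=e_1-\alpha e_3$, or with $v$ conjugated by $\exp(\ad s e_2)$-type rotations available inside $C$, and check that for large $t$ the $c$-coordinate becomes negative. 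Alternatively, I would appeal to the fact that the lower-boundary argument of Proposition~\ref{prop-oblate-atset} fails here. A cleaner route is the following: show that the Lie saturate, or simply the closed convex cone generated by $\Ad$-images of $C$ under admissible elements, is all of $\g$. Then $\A$ has nonempty interior and contains $e$ in the interior (a standard criterion, cf.~\cite{agrachev-sachkov}), and a connected Lie group generated by a semigroup with $e$ in its interior equals that semigroup.

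\emph{Step 2: $e\in\interior\A$.} It suffices to find admissible $x\in\A$ with $x^{-1}\in\A$ and $\A$ open-ish near $x$. Equivalently, I need a neighbourhood of $e$. I will verify that $-e_1$ lies in the closed convex cone generated by $\{\Ad_g C: g\in\A\}$. Taking $g=\exp(tv)$ with $v$ hyperbolic, $\Ad_{\exp(tv)}$ acts with exponential growth along the eigendirections of $\ad v$. Applying it to $e_1\in C$ gives vectors whose direction tends to an eigenvector of $\ad v$, and this eigenvector is a light-like (Killing) vector in $\sspan\{e_1,e_2,e_3\}$ with negative $e_1$-component for the appropriate sign. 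Using both $v$ and its reflection then yields a cone containing $-e_1$ together with $C$, hence all of $\g$. This growth computation is the step I expect to be the main obstacle: one must check that the eigenvectors of $\ad v$ for $\Kil(v)>0$ with $v\in C$ really have the right sign of the $e_1$-component. I would do this explicitly via \eqref{eq-commutators}.

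\emph{Step 3: conclusion.} Once $e\in\interior\A$ and $\A$ is a semigroup, the set $\A$ is open (since $\A\supset x\cdot\interior\A$ for every $x\in\A$) and closed under multiplication. Moreover $\A^{-1}\cap\A$ contains a neighbourhood of $e$, which generates the connected group $G$. Hence $\A=G$. As a remark, this also gives admissible loops through every point, which is the consequence used later.
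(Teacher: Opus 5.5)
Your Step~2 does not work. For a left-invariant system you may add $\Ad_g C$ to the control set only if both $g$ and $g^{-1}$ lie in $\overline{\A}$, because $\exp(t\Ad_g u)=g\exp(tu)g^{-1}$. Knowing only $g\in\A$ gives $\exp(t\Ad_g u)\,g=g\exp(tu)\in\A$, and that produces no neighbourhood of $e$. The Lie saturate is invariant under $\Ad_{\exp X}$ only when $\pm X$ both lie in it. So ``the closed convex cone generated by $\Ad_g C$, $g\in\A$'' is not a controllability criterion, and your growth computation has nothing valid to feed into.

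The computation itself is also off. $\Ad_g$ preserves the future half of the Killing cone, so $\Ad_{\exp(tv)}e_1$ stays future-pointing. Its direction tends to $\alpha e_1+\sqrt{\alpha^2-1}\,e_2+e_3$, which has positive $e_1$-component. Past-pointing limits come only from Killing-spacelike vectors of $C$, such as $e_1+\gamma e_3$ with $\gamma>\alpha$.

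No repair is possible, because the statement itself fails. Let $G$ act on $\R$, the universal cover of the circle $z=e^{i\theta}$, by lifting the M\"obius action $z\mapsto(az+b)/(\bar b z+\bar a)$ of $\SU_{1,1}$. A direct computation shows that $u=u_1e_1+u_2e_2+u_3e_3$ generates the vector field $\dot\theta=u_1-u_2\sin\theta+u_3\cos\theta$. Every $u\in C$ satisfies $u_1\geqslant|u_2|$, for every $\mu$. Hence at $\theta_0=\pi/2$ this field equals $u_1-u_2\geqslant 0$.

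Along an admissible curve, $\frac{d}{dt}(x(t)\cdot\theta_0)=\partial_\theta(x(t)\cdot\theta)|_{\theta=\theta_0}\,(u_1(t)-u_2(t))\geqslant 0$. Therefore $x\cdot\theta_0\geqslant\theta_0$ for all $x\in\overline{\A}$. But $(c,0)=\exp(2ce_1)$ acts as the translation $\theta\mapsto\theta+2c$, so $(c,0)\notin\A$ for $c<0$, and $\A\neq G$.

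Using $\theta_0=-\pi/2$ as well, one gets $\A\subset\{c\geqslant 0\}$. One also gets that there are no admissible loops at all, contrary to your closing remark. In particular $\exp(tv)^{-1}\notin\overline{\A}$, since $v^\sharp(\pi/2)=1>0$; this is exactly why the conjugation in your Step~2 is illegitimate.

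The paper's own route, explicit concatenations of light-like controls in Lemmas~\ref{lem-up}--\ref{lem-axis}, must break down too, since Lemma~\ref{lem-axis} asserts $(c,0)\in\A$ for all $c\in\R$. Individual steps of Lemma~\ref{lem-down} are harmless, because they stay inside the semigroup $\{x\mid x\cdot\theta_0\geqslant\theta_0\}$. The unsupported point is the iteration in Lemma~\ref{lem-any-c}. There, the decrements of $c$ from Lemma~\ref{lem-down}\,(2) depend on $u$ and $t_1$, which are re-chosen at every step, and nothing bounds them below uniformly.
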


To prove this theorem we divide it on several steps given by the following lemmas based on the fact that in the prolate case the control set contains the symmetric control set $C_0 \subset C$. We construct a sequence of admissible controls first to get a point $(c,w)$ with $c < 0$ and $|w|$ big enough, second to get a point on the $c$-axis,
third to get a given point.

\begin{lemma}
\label{lem-up}
Let $\{(c_0,w) \, | \, |w| = r_0 \} \subset \A$, then
$\{(c,w) \, | \, |w| = r_0 \} \subset \A$ for any $c \geqslant c_0$
\end{lemma}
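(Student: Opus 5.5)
The idea is to move upward along the $c$-axis by right-multiplying with the one-parameter subgroup $\exp(te_1)$. This subgroup is admissible, and in the $(c,w)$-model it acts as a vertical shift combined with a rotation of $w$. That rotation preserves the circle $|w| = r_0$, which is why the hypothesis is stated for the whole circle rather than for a single point.

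\emph{Step 1: the control $e_1$ is admissible.} For $u = e_1$ we have $u_1 = 1 > 0$ and $I_1u_1^2 - I_1u_2^2 - I_3u_3^2 = I_1 > 0$, so $e_1 \in C$ for every $\mu$.

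\emph{Step 2: right shifts preserve $\A$.} Let $x = (c_0,w_0) \in \A$ be reached by an admissible control on $[0,t_1]$. Extend this control by the constant $e_1$ on $[t_1, t_1 + t]$. Since the system is left-invariant, the endpoint of the extended trajectory is $x \cdot \exp(te_1)$, so $x \cdot \exp(te_1) \in \A$ for all $t \geqslant 0$.

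\emph{Step 3: compute the right shift in coordinates.} Take $x = te_1$ in formula~\eqref{eq-group-exp}. Here $\Kil(x) = -t^2 < 0$, $|x| = t$ and $\bar{x}_1 = 1$, so $\exp(te_1) = (t/2, 0)$. Now apply the multiplication rule~\eqref{eq-mult} with $(c_1,w_1) = (c_0,w_0)$ and $(c_2,w_2) = (s,0)$, where $s = t/2$. The $\arctan$ term vanishes because $w_2 = 0$, giving $c = c_0 + s$. The second formula gives $w = w_0 e^{-is}$. Thus
$$
(c_0,w_0)\cdot\exp(2se_1) = (c_0 + s,\ w_0e^{-is}), \qquad s \geqslant 0 .
$$

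\emph{Step 4: conclusion.} Fix $c \geqslant c_0$ and $w$ with $|w| = r_0$, and set $s = c - c_0 \geqslant 0$. Put $w_0 = we^{is}$. Then $|w_0| = r_0$, so $(c_0,w_0) \in \A$ by hypothesis. By Steps 2 and 3,
$$
(c,w) = (c_0,w_0)\cdot\exp(2se_1) \in \A .
$$

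The only point needing care is the sign and branch in the $c$-coordinate of the product. In~\eqref{eq-mult} the $\arctan$ term is exactly zero when $w_2 = 0$, so there is no branch ambiguity in the universal cover and $c$ is simply additive. Hence no step is a real obstacle; the argument is essentially the coordinate computation in Step 3.
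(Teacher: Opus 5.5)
Your proof is correct and follows the paper's argument: both apply the constant control $u=(1,0,0)=e_1$, compute $(c_0,w)\cdot\exp(te_1)=(c_0+\tfrac{t}{2},\,we^{-it/2})$ from \eqref{eq-group-exp} and \eqref{eq-mult}, and start from $(c_0,we^{i(c-c_0)})$ on the circle $|w|=r_0$. You also check two details the paper leaves implicit: that $e_1$ lies in the control set, and that the $\arctan$ term in \eqref{eq-mult} vanishes.
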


\begin{proof}
Consider the admissible trajectory starting from the point $e = (0,0)$ with the constant control $ u = (1,0,0)$.
Since $\Kil{(u)} < 0$, by formulas~\eqref{eq-group-exp} we obtain $c_u(t) = \arctan{\left( \tan{\frac{t}{2}} \right)} = \frac{t}{2}$, $w_u(t) = 0$.
Then by the multiplication rule~\eqref{eq-mult} we get
$$
(c_0,w) \cdot (c_u(t), w_u(t)) = (c_0 + c_u(t), w e^{-ic_u(t)}) = \left(c_0 + \frac{t}{2}, we^{-i\frac{t}{2}}\right).
$$
Hence, to attain the point $(c,w)$ we need to start from the point $(c_0, we^{i(c-c_0)}) \in \A$.
\end{proof}

\begin{lemma}
\label{lem-pi-2}
\emph{(1)} The inclusion $\left\{\left(\frac{\pi}{2},w\right) \, | \, w \in \C \right\} \subset \A$ is satisfied.\\
\emph{(2)} Let $(c_0,0) \in \A$, then $\{ (c,w) \, | \, c \geqslant c_0 + \frac{\pi}{2}, \ w \in \C\} \subset \A$.
\end{lemma}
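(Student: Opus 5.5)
Both parts follow from the inclusion of control sets $C_0 \subset C$ in the prolate case, together with left-invariance of the problem. First I check the inclusion. If $u_1^2 \geqslant u_2^2 + u_3^2$ and $u_1>0$, then, since $I_3 < I_1$ for $\mu>0$,
$$
I_1u_1^2 \geqslant I_1u_2^2 + I_1u_3^2 \geqslant I_1u_2^2 + I_3u_3^2 .
$$
So the future cone of the symmetric (anti-de\,Sitter) structure lies inside $C$. Consequently every admissible trajectory for $C_0$ is admissible for $C$, and the attainable sets satisfy $\A_0 \subset \A$. Here, as already used in the proof of Proposition~\ref{prop-oblate-atset}, we have $\A_0 = \{(c,w) \in G \, | \, c \geqslant \arctan{|w|}\}$ by~\cite[Prop.~7\,(a)]{grong-vasiliev}.

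For item (1), I observe that $\arctan{|w|} < \frac{\pi}{2}$ for every $w \in \C$. Hence $(\frac{\pi}{2}, w) \in \A_0 \subset \A$ for all $w$, which is exactly the claim.

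For item (2), I use that $\A$ is closed under multiplication. If $a \in \A$ is reached by an admissible trajectory $x_a$ and $b \in \A$ by $x_b$, then the concatenation of $x_a$ with the left-translated curve $L_a x_b$ is admissible, because the dynamics $\dot{x} = L_{x*}u$ is left-invariant. This concatenation ends at $a\cdot b$, so $a\cdot b \in \A$. I apply this with $a = (c_0,0)$ and $b = (c_2,w_2) \in \A_0$. In the multiplication rule~\eqref{eq-mult} with $w_1 = 0$, the $\arctg$ term vanishes and $\sqrt{1+|w_1|^2}=1$, which gives
$$
(c_0,0)\cdot(c_2,w_2) = (c_0 + c_2, \ w_2 e^{ic_0}).
$$
Now take an arbitrary target $(c,w)$ with $c \geqslant c_0 + \frac{\pi}{2}$. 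I set $c_2 = c - c_0 \geqslant \frac{\pi}{2}$ and $w_2 = w e^{-ic_0}$. Then $c_2 \geqslant \frac{\pi}{2} > \arctan{|w_2|}$, so $(c_2,w_2) \in \A_0 \subset \A$, and the product above equals $(c,w)$, which therefore lies in $\A$.

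The only point needing care is the validity of the product formula~\eqref{eq-mult} on the universal covering, since the $c$-coordinate is defined through an $\arctg$. I expect no real obstacle here, because with $w_1=0$ the correction term is identically zero. This is consistent with the fact that $(c_0,0)$ lies on the one-parameter subgroup $\exp(te_1)$, whose left action is $c \mapsto c+c_0$ combined with a rotation of $w$, exactly as in the computation in the proof of Lemma~\ref{lem-up}.
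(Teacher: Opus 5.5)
Your proof is correct. It rests on the same mechanism as the paper's proof, namely the inclusion $C_0 \subset C$ for $\mu>0$, but it obtains the needed points differently. You import the anti-de\,Sitter attainable set $\A_0=\{c\geqslant \arctan|w|\}$ as a black box and combine it with the semigroup property $\A\cdot\A\subset\A$. The paper instead constructs the points explicitly:
\begin{itemize}
\item It takes constant controls $u=(1,u_2,u_3)$ with $u_2^2+u_3^2=1$. These are light-like for the Killing form, and they lie in $C$ because $I_1-I_1u_2^2-I_3u_3^2=(I_1-I_3)u_3^2\geqslant 0$.
\item The corresponding trajectories $c_u(t)=\arctan\frac{t}{2}$, $w_u(t)=\frac{t}{2}(u_2+iu_3)$ sweep the surface $c=\arctan|w|$.
\item It then raises $c$ with the control $(1,0,0)$ via Lemma~\ref{lem-up}.
\end{itemize}
Part (2) in the paper is the same left translation by $(c_0,0)$ that you use.

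Note that you only need the easy inclusion $\{c\geqslant\arctan|w|\}\subseteq\A_0$, not the full equality from the citation. The paper's two-step construction is precisely a hands-on proof of that inclusion. Your version is shorter. The paper's version is self-contained, exhibits concrete controls, and reuses Lemma~\ref{lem-up}, which is needed again in the subsequent lemmas anyway.
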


\begin{proof}
(1) Consider a family of admissible curves $(c_u(t), w_u(t))$ starting from the point $e = (0,0)$ with a constant control $u = (1,u_2,u_3)$,
such that $1 - u_2^2 - u_3^2 = 0$. Since $\Kil{(u)} = 0$, by formulas~\eqref{eq-group-exp} we obtain
$$
c_u(t) = \arctan{\frac{t}{2}}, \qquad w_u(t) = \frac{t}{2}(u_2 + iu_3), \qquad \sqrt{u_1^2+u_2^2} = 1.
$$
Since the function $\arctan$ increases up to $\frac{\pi}{2}$ the statement~(1) follows from Lemma~\ref{lem-up}.

(2) Repeat the proof of the statement~(1) starting from the point $(c_0,0)$. Due to the multiplication law~\eqref{eq-mult} we have
$$
(c_0,0) \cdot (c_u(t),w_u(t)) = (c_0 + c_u(t), w_u(t)e^{ic_0}).
$$
It remains to apply Lemma~\ref{lem-up}.
\end{proof}

Also we need the following technical lemma.

\begin{lemma}
\label{lem-function}
Consider the function $f : [0,+\infty) \rightarrow \R$ defined by the formula $f(s) = \frac{s}{\sqrt{1+s^2}}$. Then\\
\emph{(1)} The function $f$ increases.\\
\emph{(2)} $\lim\limits_{s \rightarrow +\infty}{f(s)} = 1$.\\
\emph{(3)} $\lim\limits_{s \rightarrow 0+}{\frac{f(s)}{f(as)}} = \frac{1}{a}$ for $a \neq 0$.
\end{lemma}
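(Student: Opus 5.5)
The plan is to verify each of the three items by elementary calculus, since $f(s) = \frac{s}{\sqrt{1+s^2}}$ is an explicit smooth function on $[0,+\infty)$.

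For statement (1) I will differentiate. The quotient rule gives
$$
f'(s) = \frac{\sqrt{1+s^2} - \frac{s^2}{\sqrt{1+s^2}}}{1+s^2} = \frac{1}{(1+s^2)^{3/2}} > 0 .
$$
Hence $f$ is strictly increasing on $[0,+\infty)$.

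For statement (2) I will rewrite $f(s) = \frac{1}{\sqrt{1/s^2 + 1}}$ for $s > 0$. The denominator tends to $1$ as $s \rightarrow +\infty$, so the limit is $1$.

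For statement (3) I will write the ratio explicitly:
$$
\frac{f(s)}{f(as)} = \frac{s}{as}\cdot\frac{\sqrt{1+a^2s^2}}{\sqrt{1+s^2}} = \frac{1}{a}\sqrt{\frac{1+a^2s^2}{1+s^2}} .
$$
The square-root factor is continuous at $s = 0$, where it equals $1$, so the limit as $s \rightarrow 0+$ is $\frac{1}{a}$. Here $f$ is understood to be extended to negative arguments by the same formula, so the case $a < 0$ is also covered; since $f$ is odd, the sign comes out correctly.

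I expect no real obstacle here. The only point needing care is the domain issue in (3) when $a < 0$, which the explicit formula handles directly.
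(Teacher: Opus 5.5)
Your proposal is correct and matches the paper's proof essentially line for line: the derivative $f'(s) = (1+s^2)^{-3/2} > 0$ for (1), the rewriting $f(s) = 1/\sqrt{1+1/s^2}$ for (2), and the explicit ratio $\frac{\sqrt{1+a^2s^2}}{a\sqrt{1+s^2}} \to \frac{1}{a}$ for (3). Your remark about extending $f$ to negative arguments when $a<0$ is a small extra the paper does not address; it is harmless, since the lemma is only applied with $a = \sqrt{\bar{u}_1^2+1} > 0$.
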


\begin{proof}
(1) It is easy to check that $f'(s) = \frac{1}{(1+s^2)^{3/2}} > 0$.
(2) This follows from $f(s) = \frac{1}{\sqrt{1 + 1/s^2}} \rightarrow 1$ when $s \rightarrow +\infty$.
(3) Note that $\frac{f(s)}{f(as)} = \frac{\sqrt{1+a^2s^2}}{a\sqrt{1+s^2}} \rightarrow \frac{1}{a}$ when $s \rightarrow 0+$.
\end{proof}

Consider an admissible curve $\gamma_u(\cdot)$ with a constant light-like control $u$ such that $\Kil{(u)} \neq 0$.
For such a control we have
$$
\begin{array}{l}
I_1 \bar{u}_1^2 - I_1 \bar{u}_2^2 - I_3 \bar{u}_3^2 = 0,\\
\bar{u}_1^2 - \bar{u}_2^2 - \bar{u}_3^2 = -1,\\
\end{array}
\quad \Rightarrow \quad
\begin{array}{l}
(\mu+1)(\bar{u}_1^2 - \bar{u}_2^2) - \bar{u}_3^2 = 0,\\
\mu(\bar{u}_1^2 - \bar{u}_2^2) - 1 = 0,\\
\end{array}
\quad \Rightarrow \quad
\begin{array}{l}
\bar{u}_1^2 - \bar{u}_2^2  = \frac{1}{\mu},\\
\bar{u}_3^2 = \frac{\mu+1}{\mu}.\\
\end{array}
$$
Since $\Kil{(u)} > 0$, using~\eqref{eq-group-exp} for $\gamma_u(t) = (c_u(t), w_u(t))$ we obtain
$$
c_u(t) = \arctan{\left(\bar{u}_1\tanh{\frac{t}{2}}\right)}, \qquad w_u(t) = \sinh{\frac{t}{2}} \left(\bar{u}_2 + i\bar{u}_3\right).
$$
Moreover,
$$
|w_u(t)| = \sinh{\left(\frac{t}{2}\right)} \sqrt{\bar{u}_2^2 + \bar{u}_3^2} = \sinh{\left(\frac{t}{2}\right)} \sqrt{\bar{u}_1^2 + 1}.
$$
In other words,
\begin{equation}
\label{eq-c-w}
c_u(t) = \arctan{(\bar{u}_1 f(s))}, \qquad |w_u(t)| = \sqrt{\bar{u}_1^2 + 1} \cdot s, \qquad \text{where} \qquad s = \sinh{\frac{t}{2}}.
\end{equation}

\begin{lemma}
\label{lem-time}
There exists a time $t_1 > 0$ such that
$$
\frac{\bar{u}_1 f\left(\sinh{\frac{t_1}{2}}\right)}{f(|w_u(t_1)|)} < 1.
$$
\end{lemma}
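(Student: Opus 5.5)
Write $s=\sinh\frac{t}{2}$. By \eqref{eq-c-w} we have $|w_u(t)|=\sqrt{\bar u_1^2+1}\,s$, so the quantity in the statement equals
$$
\frac{\bar{u}_1 f(s)}{f\bigl(\sqrt{\bar u_1^2+1}\, s\bigr)}.
$$
The plan is to study this ratio as $s\to 0+$ using Lemma~\ref{lem-function}\,(3) with $a=\sqrt{\bar u_1^2+1}$. That lemma gives
$$
\lim_{s\to 0+}\frac{f(s)}{f(as)}=\frac{1}{a},
$$
so the whole ratio tends to $\frac{\bar u_1}{\sqrt{\bar u_1^2+1}}=f(\bar u_1)$. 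Here we take $\bar u_1>0$, since admissible controls have $u_1>0$.

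By Lemma~\ref{lem-function}\,(1),(2), $f$ is increasing with supremum $1$ not attained, so $f(\bar u_1)<1$. Hence the limit is strictly less than $1$. By continuity of the ratio in $s>0$, the ratio is below $1$ for all sufficiently small $s>0$. Pick such an $s$ and set $t_1=2\arcsinh s>0$; this is the required time.

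As an independent check, the explicit form
$$
\frac{f(s)}{f(as)}=\frac{\sqrt{1+a^2s^2}}{a\sqrt{1+s^2}}
$$
gives the ratio as $\bar u_1\sqrt{1+a^2s^2}\big/\bigl(a\sqrt{1+s^2}\bigr)$. Squaring, the condition becomes $\bar u_1^2(1+a^2s^2)<a^2(1+s^2)$. Using $a^2=\bar u_1^2+1$, this reduces to $\bar u_1^2 s^2<1$, i.e. $s<1/\bar u_1$. This agrees with the limit argument and makes the admissible range of $t_1$ explicit: $0<t_1<2\arcsinh(1/\bar u_1)$.

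There is no real obstacle. The only points to handle are confirming $\bar u_1>0$ (which holds for admissible controls) and noting that the constraint $\bar u_1^2-\bar u_2^2=\frac1\mu$ places no restriction that interferes with the argument.
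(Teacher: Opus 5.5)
Your main argument is correct and is exactly the paper's proof. You rewrite the quantity as $\bar u_1 f(s)/f(as)$ with $s=\sinh\frac{t}{2}$ and $a=\sqrt{\bar u_1^2+1}$, then use Lemma~\ref{lem-function}\,(3) to get the limit $\bar u_1/\sqrt{\bar u_1^2+1}<1$ as $t\to 0+$.

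The ``independent check'', however, contains an algebra error and should be dropped or corrected. Substituting $a^2=\bar u_1^2+1$ into $\bar u_1^2(1+a^2s^2)<a^2(1+s^2)$ gives $\bar u_1^2-a^2<a^2s^2(1-\bar u_1^2)$, that is $-1<(1-\bar u_1^4)s^2$, that is
$$
(\bar u_1^4-1)\,s^2<1,
$$
and not $\bar u_1^2s^2<1$. So:
\begin{itemize}
\item when $\bar u_1\leqslant 1$, every $s>0$ works;
\item when $\bar u_1>1$, the exact range is $s<(\bar u_1^4-1)^{-1/2}$.
\end{itemize}

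Your bound $s<1/\bar u_1$ is too generous once $\bar u_1^2>\frac{1+\sqrt5}{2}$. For example, with $\bar u_1=2$ and $s=0.4$ one gets $f(s)\approx 0.3714$ and $f(\sqrt5\,s)=2/3$, so the ratio is about $1.114>1$. This case genuinely occurs here: $\bar u_1^2=\frac1\mu+\bar u_2^2$ is unbounded. The correct range also shrinks to zero as $\bar u_1\to\infty$, so the admissible $t_1$ really depends on $u$.
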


\begin{proof}
Due to formulas~\eqref{eq-c-w} and Lemma~\ref{lem-function}\,(3) we have
$$
\frac{\bar{u}_1 f\left(\sinh{\frac{t}{2}}\right)}{f(|w_u(t)|)} = \frac{\bar{u}_1 f(s)}{f\left( \sqrt{\bar{u}_1^2 + 1}\cdot s \right)} \rightarrow
\frac{\bar{u}_1}{\sqrt{\bar{u}_1^2 + 1}} < 1 \qquad \text{when} \qquad t \rightarrow 0+.
$$
Hence, there exists such $t_1 > 0$ that the inequality is satisfied.
\end{proof}

\begin{lemma}
\label{lem-values}
Assume that a control $u$ is such that $\bar{u} = \left( \bar{u}_1, \bar{u}_2, \pm \sqrt{\frac{\mu+1}{\mu}} \right)$, where $\bar{u}_1^2 - \bar{u}_2^2 = \frac{1}{\mu}$.
Let $t_1 > 0$ be such that the statement of Lemma~\emph{\ref{lem-time}} is satisfied.
Then $\varphi_u(t_1) = \arg{\{ w_u(t_1)e^{ic_u(t_1)} \}}$ takes any values except $2\pi\Z$ and $\pi + 2\pi\Z$.
\end{lemma}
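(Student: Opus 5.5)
The plan is a connectedness (intermediate value) argument. I treat $t_1>0$ as fixed, with $s=\sinh\frac{t_1}{2}$. The free parameters of the control are then $\bar u_2\in\R$ and the sign of $\bar u_3=\pm\sqrt{\frac{\mu+1}{\mu}}$, with $\bar u_1=\sqrt{\frac1\mu+\bar u_2^2}>0$ determined by $\bar u_2$. By~\eqref{eq-c-w},
$$
\varphi_u(t_1)=\arg(\bar u_2+i\bar u_3)+c_u(t_1), \qquad \tan c_u(t_1)=\bar u_1 f(s)\in(0,+\infty).
$$
So for each sign of $\bar u_3$, $\varphi_u(t_1)$ is a continuous function of $\bar u_2$ on the connected set $\R$, and its image is an interval (mod $2\pi$). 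The whole statement therefore reduces to two things: identifying the endpoints of these two intervals, and showing the excluded values are never attained.

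\textbf{Step 1: the excluded values are never attained.} The value $\varphi_u(t_1)\in\pi\Z$ means that $w_u(t_1)e^{ic_u(t_1)}$ is real. Equivalently, $\bar u_3\cos c_u+\bar u_2\sin c_u=0$, i.e.\ $\bar u_2\tan c_u=-\bar u_3$. I will rewrite this with $\tan c_u=\bar u_1 f(s)$ and $|w_u(t_1)|=\sqrt{\bar u_1^2+1}\,s$. I then compare with the inequality $\bar u_1 f(s)<f(|w_u(t_1)|)$ from Lemma~\ref{lem-time}, using the monotonicity of $f$ (Lemma~\ref{lem-function}\,(1)) and the relation $\bar u_2^2+\bar u_3^2=\bar u_1^2+1$. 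The expected outcome is that this inequality makes the real-axis condition impossible, so $\varphi_u(t_1)$ stays strictly inside an open arc between consecutive multiples of $\pi$. This is the step where Lemma~\ref{lem-time} enters.

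\textbf{Step 2: the arc is swept fully.} I compute the limits of $\varphi_u(t_1)$ as $\bar u_2\to\pm\infty$ and at $\bar u_2=0$, and show the limits reach the endpoints of the arc. As $|\bar u_2|\to\infty$, $\arg(\bar u_2+i\bar u_3)$ tends to $0$ or $\pi$, while $c_u$ tends to $\arctan$ of a large quantity. Here the ratio in Lemma~\ref{lem-time} and Lemma~\ref{lem-function}\,(2),(3) control the balance between these two terms. Together with Step 1 and continuity, this gives the full open half-circle for $\bar u_3>0$. The reflection $\bar u_3\mapsto-\bar u_3$, $\bar u_2\mapsto-\bar u_2$ (the symmetry $w\mapsto-\bar w$ combined with the shift by $c_u$) gives the complementary half-circle.

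\textbf{Main obstacle.} The hardest part is making Step 2 consistent with Step 1: naively, as $|\bar u_2|\to\infty$ one has $\bar u_1\to\infty$ and $c_u\to\frac{\pi}{2}$, which would shift the endpoints away from $0,\pi$. I expect the choice of $t_1$ via Lemma~\ref{lem-time} must be used uniformly in $\bar u$, which is why the lemma is phrased as a ratio tending to $\frac{\bar u_1}{\sqrt{\bar u_1^2+1}}<1$. If the endpoint limits do not land exactly on $\pi\Z$, I would instead prove directly that the intervals for $\bar u_3>0$ and $\bar u_3<0$ together cover $\R\setminus\pi\Z$ modulo $2\pi$. To do this I would check the sign of $\bar u_3\cos c_u+\bar u_2\sin c_u$ at the two ends and use the intermediate value theorem for $\sin\varphi_u$ and $\cos\varphi_u$ separately.
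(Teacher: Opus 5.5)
Your Step~1 is sound. The gap is in Step~2, and it comes from fixing $t_1$ and varying only $\bar{u}_2$.

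\textbf{Why fixed $t_1$ fails.} Use $f(\sinh a)=\tanh a$ and $|w_u(t_1)|=\sqrt{\bar{u}_1^2+1}\,\sinh\frac{t_1}{2}$. The hypothesis of Lemma~\ref{lem-time}, $\bar{u}_1 f(\sinh\frac{t_1}{2})<f(|w_u(t_1)|)$, then reduces to $\bar{u}_1^4\sinh^2\frac{t_1}{2}<\cosh^2\frac{t_1}{2}$, i.e.\ $\bar{u}_1^2\tanh\frac{t_1}{2}<1$.

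\begin{itemize}
\item For a fixed $t_1$, the admissible controls therefore satisfy $\bar{u}_2^2<\coth\frac{t_1}{2}-\frac{1}{\mu}$, which is a bounded set. So the limits $\bar{u}_2\to\pm\infty$ in your Step~2 are not available.
\item On the closure of that set, $\arg(\bar{u}_2+i\bar{u}_3)$ stays away from $0$ and $\pi$, and $c_u\geqslant\arctan(\tanh\frac{t_1}{2}/\sqrt{\mu})>0$. Hence $\varphi_u(t_1)$ is bounded away from both $0$ and $\pi$, and the statement is false in the fixed-$t_1$ reading.
\item If you drop the hypothesis to let $|\bar{u}_2|\to\infty$, then $c_u\to\frac{\pi}{2}$ and the endpoints become $\frac{\pi}{2}$ and $\frac{3\pi}{2}$. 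The curve then crosses $\pi$, which is exactly the conflict with Step~1 you sensed.
\item Your remedy, a $t_1$ working uniformly in $\bar{u}$, cannot exist by the inequality above. Your fallback covering argument fails for the same reason.
\end{itemize}

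\textbf{The missing idea.} $t_1$ must depend on $u$ and shrink as $|\bar{u}_2|\to\infty$. This matches how the lemma is used in Lemma~\ref{lem-down}: ``there exist $t_1$ and $u$''.

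\begin{itemize}
\item Let $(\bar{u}_2,t_1)$ range over $D=\{\tanh\frac{t_1}{2}<(\frac{1}{\mu}+\bar{u}_2^2)^{-1}\}$. This is the region under the graph of a positive function of $\bar{u}_2$, hence connected.
\item On $D$ we have $\tan c_u(t_1)=\bar{u}_1\tanh\frac{t_1}{2}<\frac{1}{\bar{u}_1}\to 0$.
\item So for $\bar{u}_3>0$, $\varphi_u(t_1)\to 0$ as $\bar{u}_2\to+\infty$ and $\varphi_u(t_1)\to\pi$ as $\bar{u}_2\to-\infty$. These are the limits the paper's proof asserts. Its one-line justification mentions $c_u(t_1)\to\frac{\pi}{2}$, but it is $c_u(t_1)\to 0$ that actually produces them.
\item Your Step~1 then closes cleanly: $|\bar{u}_2|\tan c_u<|\bar{u}_2|/\bar{u}_1<1<|\bar{u}_3|$. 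So $\varphi_u(t_1)\notin\pi\Z$, the connected image lies in $(0,\pi)$, and by the limits it equals $(0,\pi)$.
\item The case $\bar{u}_3<0$ gives $(-\pi,0)$ in the same way.
\end{itemize}

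Step~1 is a genuine addition to the paper. The paper's argument (continuity in $u$ plus the two limits) only addresses attaining the values outside $\pi\Z$; it does not show that $\pi\Z$ is avoided.
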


\begin{proof}
It is clear that the function $\varphi_u(t_1)$ as a function of variable $u$ is continuous.
So, it is sufficient to prove that if $\bar{u}_3 = \sqrt{\frac{\mu+1}{\mu}}$, then
$$
\lim\limits_{\bar{u}_2 \rightarrow +\infty}{\varphi_u(t_1)} = 0, \qquad \lim\limits_{\bar{u}_2 \rightarrow -\infty}{\varphi_u(t_1)} = \pi.
$$
Indeed, $\varphi_u(t_1) = \arg{w_u(t_1)} + c_u(t_1) = \arg{u} + c_u(t_1)$ and
$\arg{u} \rightarrow \pm \frac{\pi}{2}$ and $c_u(t_1) \rightarrow \frac{\pi}{2}$ while $\bar{u}_2 \rightarrow \pm \infty$.
\end{proof}

Let us show that we can go down with respect to the variable $c$.

\begin{lemma}
\label{lem-down}
\emph{(1)} For any $c_0 \in \R$ and for almost all $e^{i\varphi} \in S^1$ there exist $r_0 > 0$, a control $u$
\emph{(}where $\bar{u} = \left( \bar{u}_1, \bar{u}_2, \pm \sqrt{\frac{\mu+1}{\mu}} \right)$ and $\bar{u}_1^2 - \bar{u}_2^2 = \frac{1}{\mu}$\emph{)} and $t_1 > 0$ such that
for any $w_0 = |w_0|e^{i\varphi}$, where $|w_0| > r_0$ for
$(\hat{c}_u(t_1), \hat{w}_u(t_1)) = (c_0, w_0) \cdot (c_u(t_1), w_u(t_1))$ the inequality $\hat{c}_u(t_1) < c_0$ is satisfied.\\
\emph{(2)} The inequality
$$
c_0 - \hat{c}_u(t_1) > \arctan{f(|w_u(t_1)|)} - c_u(t_1)
$$
is satisfied.\\
\emph{(3)} Moreover, choosing suitable $|w_0| > r_0$ we can get any value
$$
|\hat{w}_u(t_1)| \geqslant \sqrt{|w_u(t_1)|^2(1+r_0^2) + r_0^2(1+|w_u(t_1)|^2)}.
$$
\end{lemma}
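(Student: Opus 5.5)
The plan is to compute $(\hat c_u(t_1),\hat w_u(t_1))=(c_0,w_0)\cdot(c_u(t_1),w_u(t_1))$ directly from the multiplication rule~\eqref{eq-mult}. The free angle will be fixed with Lemma~\ref{lem-values}, and the inequalities will then reduce to Lemmas~\ref{lem-function} and~\ref{lem-time}.

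\textbf{Setting up the $c$-coordinate.} Write $w_u=w_u(t_1)$, $c_u=c_u(t_1)$, and put
$$
\varphi_u=\arg\{w_ue^{ic_u}\},\qquad \theta=\varphi-c_0-\varphi_u .
$$
Then
$$
w_0\bar w_ue^{-i(c_0+c_u)}=|w_0||w_u|e^{i\theta}.
$$
Dividing the numerator and denominator of the arctangent in~\eqref{eq-mult} by $\sqrt{1+|w_0|^2}\sqrt{1+|w_u|^2}$ and setting $F=f(|w_0|)f(|w_u|)\in(0,1)$, I get
$$
\hat c_u(t_1)=c_0+c_u+\arctan\frac{F\sin\theta}{1+F\cos\theta}.
$$
For fixed $\theta$ with $\sin\theta<0$, the derivative of the last term in $F$ is $\sin\theta/(1+F\cos\theta)^2<0$. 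Since $f$ increases (Lemma~\ref{lem-function}\,(1)), the last term is therefore strictly decreasing in $|w_0|$.

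\textbf{Choice of $r_0$, $\theta$, $t_1$, $u$.} Take $t_1$ from Lemma~\ref{lem-time} and set $f_u=f(|w_u|)$. By Lemma~\ref{lem-function}\,(2), $f(r)\to1$ as $r\to\infty$. Because $\arcsin f_u>\arctan f_u$ and $\arctan f_u>\arctan(\bar u_1 f(\sinh\frac{t_1}{2}))=c_u$ (the latter by Lemma~\ref{lem-time}), I can choose $r_0$ so large that
$$
\arcsin\bigl(f(r_0)f_u\bigr)>\arctan f_u .
$$
Then fix $\theta$ with $\cos\theta=-f(r_0)f_u$ and $\sin\theta<0$. At $F_0=f(r_0)f_u$ this $\theta$ maximizes the modulus of the arctangent term, which equals $\arcsin F_0$ there. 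Realizing this $\theta$ means $\varphi_u\equiv\varphi-c_0-\theta$. By Lemma~\ref{lem-values} such a $u$ exists unless this value lies in $\pi\Z$, which excludes only countably many $\varphi$, i.e., almost all $e^{i\varphi}$ remain.

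\textbf{Parts (1) and (2).} By the monotonicity noted above, for every $|w_0|>r_0$,
$$
c_0-\hat c_u(t_1)>\arcsin\bigl(f(r_0)f_u\bigr)-c_u>\arctan f(|w_u(t_1)|)-c_u(t_1).
$$
This is exactly~(2). Since the right-hand side is positive, it also gives~(1).

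\textbf{Part (3).} From~\eqref{eq-mult},
$$
|\hat w|^2=|w_u|^2(1+|w_0|^2)+|w_0|^2(1+|w_u|^2)+2|w_u||w_0|\sqrt{1+|w_0|^2}\sqrt{1+|w_u|^2}\cos\theta .
$$
The angle between $w_ue^{ic_0}$ and $w_0e^{-ic_u}$ equals $-\theta$, which is what produces the $\cos\theta$ in the cross term. This is a continuous function of $|w_0|$ tending to $+\infty$ as $|w_0|\to\infty$, since the coefficient of $|w_0|^2$ is $|w_u|^2+1+|w_u|^2+2|w_u|\sqrt{1+|w_u|^2}\cos\theta$, and this is positive because $|\cos\theta|<1$. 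Since $\cos\theta<0$, its value at $|w_0|=r_0$ is at most $|w_u|^2(1+r_0^2)+r_0^2(1+|w_u|^2)$. By the intermediate value theorem, every value of $|\hat w_u(t_1)|$ at least $\sqrt{|w_u(t_1)|^2(1+r_0^2)+r_0^2(1+|w_u(t_1)|^2)}$ is attained for some $|w_0|\geqslant r_0$; enlarging $r_0$ slightly makes the inequality $|w_0|>r_0$ strict.

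\textbf{Main obstacle.} The delicate step is getting the strict inequality in~(2). The natural choice $\theta=-\pi/2$ only yields $\arctan(Ff_u)<\arctan f_u$, i.e., the reversed inequality. The fix is to choose $\theta$ with $\cos\theta=-f(r_0)f_u$, where the arctangent term reaches $\arcsin F$, which strictly exceeds $\arctan f_u$ once $f(|w_0|)$ is close to $1$. The second point to keep straight is that $\theta$ is fixed using $r_0$, so monotonicity in $|w_0|$ is what makes the bound hold for all $|w_0|>r_0$ simultaneously.
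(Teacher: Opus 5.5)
Your computation is sound for a \emph{fixed} control $u$, and your diagnosis of (2) is right. With the paper's perpendicular choice $\arg\{w_0\bar w_u e^{-i(c_0+c_u)}\}=-\frac{\pi}{2}$ one gets $c_0-\hat c_u(t_1)=\arctan\bigl(f(|w_u(t_1)|)f(|w_0|)\bigr)-c_u(t_1)$. The paper's monotonicity argument only bounds this from below by $\arctan\bigl(f(|w_u(t_1)|)f(r_0)\bigr)-c_u(t_1)$. Since $f(|w_0|)<1$, the quantity itself is strictly \emph{less} than $\arctan f(|w_u(t_1)|)-c_u(t_1)$. So tilting $\theta$ into $(-\pi,-\frac{\pi}{2})$ is a genuinely different route, and it is needed for (2) as literally stated.

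The gap is in the order of your choices. You form $f_u=f(|w_u(t_1)|)$ and $c_u=c_u(t_1)$, use them to fix $r_0$ and then $\theta$ via $\cos\theta=-f(r_0)f_u$, and only afterwards invoke Lemma~\ref{lem-values} to select $u$ with $\varphi_u\equiv\varphi-c_0-\theta$. Along that family, however, $|w_u(t_1)|=\sinh\frac{t_1}{2}\sqrt{\bar u_1^2+1}$ and $c_u(t_1)=\arctan(\bar u_1\tanh\frac{t_1}{2})$ both change with $u$; indeed $\varphi_u$ can only vary because $\bar u_1$ varies. So the control you end up with need not satisfy $\cos\theta=-f(r_0)f_u$ or $\arcsin(f(r_0)f_u)>\arctan f_u$ with its own $f_u$ and $c_u$. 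For example, if the first $f_u$ was small, then $\theta$ is close to $-\frac{\pi}{2}$, and for a new control with $f_u$ near $1$ you are back in the reversed-inequality regime. Likewise, $t_1$ should be the one fixed in Lemma~\ref{lem-values}, not taken from Lemma~\ref{lem-time} for an as yet unspecified $u$. The paper avoids this circularity only because its target angle $-\frac{\pi}{2}$ does not depend on $u$: it picks $u,t_1$ first and $r_0$ last.

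To repair the argument, choose the target angle uniformly in $u$ and choose $r_0$ last.
\begin{itemize}
\item At $F=f_u$ (the limit $|w_0|\to\infty$), the requirement $\arctan\frac{F\sin\theta}{1+F\cos\theta}<-\arctan f_u$ holds exactly for $\theta\in\bigl(-\frac{\pi}{2}-2\arctan f_u,-\frac{\pi}{2}\bigr)$.
\item For fixed $t_1$ you have $f_u\geqslant m:=f\bigl(\sqrt{(\mu+1)/\mu}\,\sinh\frac{t_1}{2}\bigr)>0$, because $\bar u_1^2\geqslant\frac{1}{\mu}$.
\item So fix $\theta=-\frac{\pi}{2}-\arctan m$, which lies in the window above for every $u$ in the family. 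Then pick $u$ by Lemma~\ref{lem-values} to realize it; this excludes only two values of $e^{i\varphi}$.
\item Only then, using continuity in $F$ and your monotonicity, take $r_0$ so large that $\arctan\frac{F_0\sin\theta}{1+F_0\cos\theta}<-\arctan f_u$ with $F_0=f(r_0)f_u$ for this $u$.
\end{itemize}
After this, (1) and (2) follow exactly as you wrote. Your law-of-cosines argument for (3) also goes through unchanged because $\cos\theta<0$. It already gives $|w_0|>r_0$ strictly, since at $|w_0|=r_0$ the value is strictly below the threshold, so there is no need to enlarge $r_0$.
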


\begin{proof}
(1) By Lemma~\ref{lem-values} for almost all $e^{i\varphi} \in S^1$ there exist $t_1 > 0$ and $u$
(where $\bar{u} = \left( \bar{u}_1, \bar{u}_2, \pm \sqrt{\frac{\mu+1}{\mu}} \right)$, $\bar{u}_1^2 - \bar{u}_2^2 = \frac{1}{\mu}$) such that
\begin{equation}
\label{eq-pi-2}
\arg{\{ w_u(t_1)e^{ic_u(t_1)} \}} = \arg{\{ e^{i\varphi}e^{-ic_0} \}} + \frac{\pi}{2},
\end{equation}
and $\frac{\bar{u}_1 f\left( \sinh{\frac{t}{2}} \right)}{f(|w_u(t_1)|)} < 1$.

Due to Lemma~\ref{lem-function}\,(1--2) there exists $r_0 > 0$ such that
$$
\frac{\bar{u}_1 f\left( \sinh{\frac{t}{2}} \right)}{f(|w_u(t_1)|)} < f(r_0) < 1,
$$
and for any $w_0 = |w_0|e^{i\varphi}$, $|w_0| > r_0$ we have $f(r_0) < f(|w_0|) < 1$.

Hence, $\frac{\bar{u}_1 f\left( \sinh{\frac{t}{2}} \right)}{f(|w_u(t_1)|)} < f(|w_0|)$, i.e., by formula~\eqref{eq-c-w} and since the function $\arctan$ increases we have
\begin{equation}
\label{eq-arctan}
c_u(t_1) = \arctan{\left( \bar{u}_1f\left( \sinh{\frac{t_1}{2}} \right)\right)} < \arctan{\left( f(|w_u(t_1)|)f(|w_0|) \right)}.
\end{equation}
From the multiplication rule~\eqref{eq-mult} we obtain
$$
\hat{c}_u(t_1) = c_0 + c_u(t_1) - \arctan{\left( f(|w_u(t_1)|)f(|w_0|) \right)},
$$
because formula~\eqref{eq-pi-2} implies that
$$
\arg{\{ w_0\bar{w}_u(t_1)e^{-i(c_0 + c_u(t_1))} \}} = -\frac{\pi}{2}.
$$
Finally, it follows from inequality~\eqref{eq-arctan} that $\hat{c}_u(t_1) < c_0$.

(2) It follows from the fact that the value of
$$
c_0 - \hat{c}_u(t_1) = \arctan{\left( f(|w_u(t_1)|)f(|w_0|) \right)} - c_u(t_1)
$$
increases when $|w_0|$ is increasing.

(3) From the multiplication rule~\eqref{eq-mult} we have
$$
\hat{w}_u(t_1) = w_u(t_1)e^{ic_0}\sqrt{1 + |w_0|^2} + w_0e^{-ic_u(t_1)}\sqrt{1 + |w_u(t_1)|^2}.
$$
The equation~\eqref{eq-pi-2} means that these two terms are perpendicular to each other. Hence, by the Pythagoras theorem
$$
|\hat{w}_u(t_1)|^2 = |w_u(t_1)|^2 (1 + |w_0|^2) + |w_0|^2 (1 + |w_u(t_1)|^2).
$$
This expression increases as a function of the variable $|w_0|$. This implies the statement~(3).
\end{proof}

\begin{lemma}
\label{lem-any-c}
For any $c \in \R$ there exists $w \in \C$ \emph{(}with sufficiently big $|w|$\emph{)} such that $(c,w) \in \A$.
\end{lemma}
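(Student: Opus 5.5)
We prove Lemma~\ref{lem-any-c} by iterating the descending step of Lemma~\ref{lem-down}, starting from the points supplied by Lemma~\ref{lem-pi-2}. The claim is trivial for $c \geqslant \frac{\pi}{2}$: by Lemma~\ref{lem-pi-2}\,(1) and Lemma~\ref{lem-up}, every point $(c,w)$ with $c \geqslant \frac{\pi}{2}$ and any $w \in \C$ lies in $\A$. So we only need to show that for every $c < \frac{\pi}{2}$ some point $(c,w)$ with large $|w|$ is attainable.

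\emph{Step 1: one descent step.} Start from $(c_0,w_0) \in \A$, where $c_0 = \frac{\pi}{2}$ and $w_0 = |w_0|e^{i\varphi}$. Choose $\varphi$ among the admissible ``almost all'' directions of Lemma~\ref{lem-down}\,(1), and take $|w_0| > r_0$. Apply the constant light-like control $u$ of Lemma~\ref{lem-down} for time $t_1$. This gives an attainable point $(\hat c, \hat w)$ with $\hat c < c_0$.

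By Lemma~\ref{lem-down}\,(2), the decrement satisfies
$$
c_0 - \hat c = \arctan{\bigl(f(|w_u(t_1)|)f(|w_0|)\bigr)} - c_u(t_1).
$$
This quantity is continuous and increasing in $|w_0|$. It is bounded below by the positive constant $\delta = \arctan{\bigl(f(|w_u(t_1)|)f(r_0)\bigr)} - c_u(t_1) > 0$, which depends only on $u$, $t_1$ and $r_0$.

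By Lemma~\ref{lem-down}\,(3), $|\hat w|$ can be made as large as we like by increasing $|w_0|$.

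Since the attainable set is closed under concatenation of admissible controls (left-invariance), every point produced this way belongs to $\A$.

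\emph{Step 2: iteration.} Suppose $(c_k, w)$ belongs to $\A$ for all $w$ with $|w| \geqslant R_k$. Then Step 1 can be applied again with $c_0 = c_k$, provided two things hold.

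First, the admissible direction set of Lemma~\ref{lem-down} must be nonempty for this $c_0$. It is, because it excludes only the directions in which $\varphi - c_0 + \frac{\pi}{2} \in \pi\Z$ (Lemma~\ref{lem-values}).

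Second, $|w_0|$ must be taken at least $\max\{R_k, r_0\}$.

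Under these conditions we obtain points $(c_{k+1}, w)$ with $c_{k+1} \leqslant c_k - \delta$, where $\delta$ can be chosen uniformly because $u$, $t_1$, $r_0$ depend on $c_0$ only through the rotation in \eqref{eq-pi-2}.

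Repeating the step finitely many times, we reach some $c_N \leqslant c$.

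\emph{Step 3: hitting an exact value of $c$.} Rather than reaching exactly $c$, it suffices to reach some $c' \leqslant c$ on a whole circle. Two tools are available.

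Continuity: in the last step, the decrement varies continuously in $|w_0|$, which gives an interval of values of $\hat c$.

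Lemma~\ref{lem-up}: if a full circle $\{|w| = r\}$ at height $c'$ is contained in $\A$, we may move up to height $c$ along that circle.

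The main obstacle is exactly this step: the descent of Lemma~\ref{lem-down} is performed only for specific directions of $w$, so we do not directly obtain whole circles at a given height. To deal with it, I would use the $\SO_{1,1}$-invariance and the freedom in choosing $\varphi$ to show that the final points fill an open set of directions. Alternatively, and more simply, use the intermediate value theorem in $|w_0|$ to land exactly on height $c$ with $|\hat w|$ large. The lemma only asks for existence of some $w$, so this suffices.

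Uniformity of $\delta$ along the iteration should follow from the fact that changing $c_0$ merely rotates the required $\varphi$.
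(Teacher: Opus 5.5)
Your Steps~1--2 follow the paper's route: start from $\{(\frac{\pi}{2},w)\}\subset\A$ and iterate Lemma~\ref{lem-down}. The gap is Step~3, which you yourself call the main obstacle and leave unresolved.

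\textbf{Step 3.} The paper closes this step with an ingredient you are missing: the constant control $(1,0,0)$ acts on a \emph{single} point. Since $\exp(se_1)=(\frac{s}{2},0)$, formula~\eqref{eq-mult} gives $(c_1,w)\cdot(\frac{s}{2},0)=(c_1+\frac{s}{2},\,we^{-is/2})$. This is exactly the computation inside the proof of Lemma~\ref{lem-up}. So once you have any $(c_1,w)\in\A$ with $c_1<c$, the time $s=2(c-c_1)$ yields $(c,\,we^{-i(c-c_1)})\in\A$ with the same $|w|$.

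Lemma~\ref{lem-up} needs a full circle only because its conclusion is a full circle. The present lemma asks for one $w$, so the rotation of $\arg w$ is harmless. No $\SO_{1,1}$ argument and no intermediate value argument is needed.

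Your intermediate value alternative would in fact not work as stated. In the last step the decrement $\arctan\bigl(f(|w_u(t_1)|)f(|w_0|)\bigr)-c_u(t_1)$ ranges, for $|w_0|\geqslant\max\{R_k,r_0\}$, only over an interval whose left end is at least your $\delta>0$ and whose right end is $\arctan f(|w_u(t_1)|)-c_u(t_1)$. Nothing in your iteration forces $c_{N-1}-c$ into that interval.

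\textbf{Step 2.} The induction does not close as written. Step~1 does not return the set $\{(c_{k+1},w)\,|\,|w|\geqslant R_{k+1}\}$:
the output height $\hat c$ is strictly decreasing in $|w_0|$, and each output point has a single direction determined by $\varphi$ and $|w_0|$. Yet your uniform $\delta$ relies on the same relative direction $\varphi-c_k$ being available at every height.

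Lifting by the $(1,0,0)$-flow does not repair this for free. Lifting by $h$ shifts $\arg w-c$ by $-2h$. With a fixed $u$, one checks from~\eqref{eq-mult} that restoring the relative direction after one step requires lifting by $\frac12\bigl(\arctan\frac{f(|w_u(t_1)|)}{f(|w_0|)}+\arctan(f(|w_u(t_1)|)f(|w_0|))\bigr)-c_u(t_1)$. This exceeds the decrement $\arctan\bigl(f(|w_u(t_1)|)f(|w_0|)\bigr)-c_u(t_1)$ gained in that step.

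The paper's proof only asserts that every step again gives points with large $|w|$ and almost all $\arg w$, so here you are at the same level of detail as the paper. Still, a complete argument has to track which directions are available at each new height and keep the decrements bounded below.
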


\begin{proof}
We know from Lemma~\ref{lem-pi-2}\,(1) that $\left\{ \left(\frac{\pi}{2}, w \right) \, | \, w \in \C \right\} \subset \A$.
Due to Lemma~\ref{lem-pi-2}\,(2) we may assume that $c < \frac{\pi}{2}$.
Starting from this set and repeatedly applying Lemma~\ref{lem-down} we get that for some $c_1 < c$ there exists $w$ such that $(c_1,w) \in \A$.

Indeed, at every step we have $(c,w) \in \A$ for sufficiently big $|w|$ and almost all $\arg{w}$.
Moreover, at any step the value $c$ decreases and due to Lemma~\ref{lem-down}\,(2) the next step of this decreasing can be made not less than the previous one.

It remains to apply the constant control $(1,0,0)$ to reach the required $c$ from the point $(c_1,w)$.
Note that in this case $\arg{w}$ will change, but $|w|$ will stay the same according to the multiplication law~\eqref{eq-mult}.
\end{proof}

\begin{lemma}
\label{lem-axis}
The inclusion $\{ (c,0) \, | \, c \in \R \} \subset \A$ is satisfied.
\end{lemma}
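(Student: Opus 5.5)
Starting from Lemma~\ref{lem-any-c}, for any $c \in \R$ we can reach a point $(c_0,w_0) \in \A$ with $c_0$ as low as we like and $|w_0|$ large. The plan is to then steer $w$ back to $0$ with an admissible control while keeping track of how much $c$ grows. The natural tool is the light-like constant controls with $\Kil{(u)} = 0$ from the proof of Lemma~\ref{lem-pi-2}. For these, $c_u(t) = \arctan{\frac{t}{2}}$ and $w_u(t) = \frac{t}{2}(u_2 + iu_3)$ with $u_2^2+u_3^2=1$; they are admissible since $C_0 \subset C$ when $\mu>0$. Their key feature is that the increase of $c$ is bounded by $\frac{\pi}{2}$ however long they run.

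Given $(c_0,w_0)$, apply such a control for time $t$. By the multiplication rule~\eqref{eq-mult} the endpoint has
$$
w = w_u(t)\sqrt{1+|w_0|^2}\,e^{ic_0} + w_0\sqrt{1+|w_u(t)|^2}\,e^{-ic_u(t)}.
$$
We choose the direction $u_2+iu_3$ so that the first term is antiparallel to the second. Since $|w_u(t)|=t/2$ and $c_u(t)=\arctan(t/2)$, setting the moduli equal gives
$$
\frac{t}{2}\sqrt{1+|w_0|^2} = |w_0|\sqrt{1+t^2/4},
$$
that is $t/2 = |w_0|$. The phase condition is then $\arg(u_2+iu_3) = \arg w_0 - c_0 - \arctan|w_0| + \pi$. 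Because the direction of $u$ is a free point of $S^1$, this can always be satisfied, and it gives $w=0$ exactly.

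It remains to compute the resulting $c$. Here $w_1\bar w_2 e^{-i(c_1+c_2)}$ is real negative, with modulus $|w_0|^2$, and $\sqrt{1+|w_1|^2}\sqrt{1+|w_2|^2} = 1+|w_0|^2$. Hence the arctan correction term in~\eqref{eq-mult} is zero, and the endpoint is $(c_0 + \arctan|w_0|, 0)$. Thus from $(c_0,w_0)$ we reach $(c',0)$ with $c' < c_0 + \frac{\pi}{2}$.

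Given an arbitrary target $c$, we first use Lemma~\ref{lem-any-c} to obtain $(c_0,w_0)\in\A$ with $c_0 < c - \frac{\pi}{2}$. The step above then yields $(c',0)\in\A$ with $c' < c$. Finally we apply the control $(1,0,0)$ from Lemma~\ref{lem-up}, which moves along the axis $w=0$ by $(c',0)\cdot(t/2,0) = (c'+t/2,0)$, and so reach $(c,0)$.

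The main obstacle is verifying the bookkeeping in~\eqref{eq-mult}, namely that the chosen phase really makes the two terms cancel and that the imaginary part entering the arctan vanishes. If the sign conventions are off, one can instead pick the phase to cancel the terms and bound $c' \le c_0 + \frac{\pi}{2} + \frac{\pi}{2}$ crudely. Since $c_0$ can be made arbitrarily negative, any such uniform bound suffices.
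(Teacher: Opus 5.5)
Your proposal is correct and follows essentially the paper's proof: reach a point below $c-\frac{\pi}{2}$ via Lemma~\ref{lem-any-c}, then cancel $w$ with a constant control $u$ satisfying $\Kil(u)=0$. The moduli are matched by $t/2=|w_0|$ and the phase of $u$ is chosen freely, so the arctan correction in~\eqref{eq-mult} vanishes and $c$ rises by less than $\frac{\pi}{2}$; you then finish with the control $(1,0,0)$. Your explicit endpoint $(c_0+\arctan|w_0|,0)$ and your phase condition check out, so the hedging fallback in your last paragraph is unnecessary.
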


\begin{proof}
Let $c \in \R$ be arbitrary. There exists a sufficiently big $|w_0|$ such that $\left( c - \frac{\pi}{2}, w_0 \right) \in \A$ by Lemma~\ref{lem-any-c}.
Let us consider a curve starting from the point $e = (0,0)$ with a constant control $u = (1,u_2,u_3)$ such that $u_2^2 + u_3^2 = 1$:
$$
c_u(t) = \arctan{\frac{t}{2}}, \qquad w_u(t) = \frac{t}{2} (u_2 + iu_3).
$$
Then by the multiplication rule~\eqref{eq-mult} we can calculate $(\hat{c}_u(t), \hat{w}_u(t)) = \left(c - \frac{\pi}{2}, w_0 \right) \cdot (c_u(t), w_u(t))$.
We claim that there exist $u$ and $t$ such that $\hat{w}_u(t) = 0$.
Indeed, we need
$$
w_u(t) \sqrt{1 + |w_0|^2} e^{ic_0} = -w_0 \sqrt{1 + |w_u(t)|^2} e^{-ic_u(t)}.
$$
The modules of these complex numbers are equal if and only if $f(|w_u(t)|) = f(|w_0|)$ and we can choose $t$ such that this equality will be satisfied.
Next, we can choose $u$ such that the arguments of these two complex numbers will be equal.
We obtain
$$
c - \frac{\pi}{2} < \hat{c}_u(t) < c - \frac{\pi}{2} + \frac{\pi}{2} = c,
$$
since $\Image{\left( w_0 \bar{w}_u e^{-i(c_0 + c_u(t))} \right)} = 0$ in this case.
It remains to start from the point $(\hat{c}_u(t),0) \in \A$ with the constant control $(1,0,0)$ to reach the point $(c,0)$.
\end{proof}

\begin{proof}[Proof of Theorem~\emph{\ref{th-prolate-atset}}]
Let us show that $(c,w) \in \A$ for an arbitrary point $(c,w) \in G$.
By Lemma~\ref{lem-axis} the point $\left( c - \frac{\pi}{2}, 0 \right)$ is attainable.
It follows from Lemma~\ref{lem-pi-2}\,(2) that $(c,w) \in \A$.
\end{proof}

\begin{corollary}
\label{crl-prolate-existence}
For any point $x \in G$ the supremum of the length of all admissible curves from the point $e$ to the point $x$ is equal to $+\infty$.
In particular, there is no the longest arc from the starting point $e$ to the point $x$.
\end{corollary}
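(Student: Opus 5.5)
The plan is to combine Theorem~\ref{th-prolate-atset} with left-invariance to build admissible loops of positive Lorentzian length through $e$ (and hence through every point), and then to concatenate such loops with a fixed admissible curve from $e$ to $x$.

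\emph{Step 1: a loop through $e$.} Take the constant time-like control $u=(1,0,0)$. It lies in $C$ and has Lorentzian speed $\sqrt{I_1}>0$. Run it for time $t_0=2$ to obtain an admissible curve $\gamma_0$ from $e$ to $g_0=(1,0)$ of positive length $\ell_0=2\sqrt{I_1}$. By Theorem~\ref{th-prolate-atset} the point $g_0^{-1}$ is attainable from $e$. So there is an admissible curve $\gamma_1$ from $e$ to $g_0^{-1}$, of length $\ell_1\geqslant 0$. The dynamics are left-invariant and the cost only depends on the control, so the left shift $L_{g_0}\gamma_1$ is an admissible curve from $g_0$ to $e$ of the same length. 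The concatenation $\sigma=\gamma_0 * L_{g_0}\gamma_1$ is an admissible loop at $e$ of length at least $\ell_0>0$.

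\emph{Step 2: pumping the length.} For $n\in\mathbb N$, traverse $\sigma$ $n$ times. This gives an admissible loop at $e$ of length at least $n\ell_0$. Let $x\in G$ be arbitrary; by Theorem~\ref{th-prolate-atset} there is an admissible curve $\beta$ from $e$ to $x$. Then $\sigma^n*\beta$ is admissible from $e$ to $x$, with length at least $n\ell_0$. Letting $n\to\infty$ shows that the supremum of lengths equals $+\infty$.

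\emph{Step 3: nonexistence of a longest arc.} A longest arc would be an admissible curve realizing the supremum. Every admissible curve defined on a compact interval with a bounded control has finite length, so no such curve can realize $+\infty$. A loop through an arbitrary point $x$, if wanted, is simply $L_x\sigma$.

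The only technical point is checking that concatenations and left translations of admissible curves are admissible. The concatenated control is again in $L^\infty$ with values in $C$, and time translation is harmless because the problem is autonomous. The lengths add because the cost is additive over concatenations. I expect no real obstacle; the substance lies entirely in Theorem~\ref{th-prolate-atset}.
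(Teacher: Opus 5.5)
Your proof is correct and is essentially the paper's argument. The paper builds the loop $e \to (-1,0) \to e$, using Theorem~\ref{th-prolate-atset} for the first leg and the time-like segment with control $(1,0,0)$ for the second; this is the same closed curve as yours, started from a different point. You also spell out two things the paper leaves implicit: that the loop has positive length (because of the time-like segment), and that every individual admissible curve has finite length.
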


\begin{proof}
It is sufficient to prove that for any $x \in G$ there is an admissible loop passing through the point $x$. Indeed, in this case including this loop in an admissible curve repeatedly we get a sequence of admissible curves with arbitrary big length. Due to the left-invariance it is enough to find an admissible loop through the point $e$.
But the point $(-1,0)$ is attainable by Theorem~\ref{th-prolate-atset}, it remains to note that the segment from the point $(-1,0)$ to the point $e = (0,0)$ is an admissible curve.
\end{proof}

\begin{remark}
\label{rem-prolate-cut}
Since there are no optimal solutions in the prolate case there is no sense in the notions of the cut time and the cut locus.
However, the first conjugate time, the first Maxwell time for symmetries and the corresponding Maxwell set are well defined.
We study these objects in the next two sections.
\end{remark}

\subsection{\label{sec-prolate-conj}The conjugate points}

\begin{theorem}
\label{th-prolate-conj}
If $\mu > 0$, then
$$
\tconj(h) = \left\{
\begin{array}{lcl}
\frac{2 \tau_{\mathrm{conj}}(\bar{h}_3) I_1}{|h|}, & \text{if} & \Kil{(h)} < 0,\\
+\infty & \text{if} & \Kil{(h)} = 0,\\
\end{array}
\right.
$$
where $\tau_{\mathrm{conj}}$ is the first positive root of the equation $\tan{\tau} = -\tau\mu\frac{-1-\bar{h}_3^2}{-1+\mu\bar{h}_3^2}$.
Moreover, $\tau_{\mathrm{conj}}(\bar{h}_3) \in [\frac{\pi}{2}, \pi)$.
\end{theorem}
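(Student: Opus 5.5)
The plan is to reuse the Jacobian computation from the proof of Theorem~\ref{th-oblate-conj}, now with $\mu>0$. By Lemma~\ref{lem-h3}\,(2), in the prolate case every extremal trajectory either has $\Kil(h)<0$ with $\bar h_3\in[-\frac{1}{\sqrt\mu},\frac{1}{\sqrt\mu}]$, or is one of the two light-like trajectories with $\Kil(h)=0$, $\bar h_3=0$. So only these two cases need to be treated.

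For $\Kil(h)<0$, the factorization~\eqref{eq-det} and the derivatives~\eqref{eq-derivatives} depend only on formulas~\eqref{eq-hyperbolic-time-geodesic}, not on the sign of $\mu$. Hence the Jacobian again equals, up to a nondegenerate factor, the product of $s(\tau,h)=\sin\tau$ and expression~\eqref{eq-conj-full} with $\delta(h)=-1$, that is
$$
\sin^2\tau\,\bigl[-\tau\mu(1+\bar h_3^2)\cos\tau+(\mu\bar h_3^2-1)\sin\tau\bigr].
$$
The factor $\sin\tau$ first vanishes at $\tau=\pi$, so it suffices to show that the bracket has its first positive root in $[\frac{\pi}{2},\pi)$ and to identify that root.

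\textbf{Generic case $\mu\bar h_3^2<1$.} If $\cos\tau=0$, the bracket equals $\pm(\mu\bar h_3^2-1)\neq0$. So the roots are exactly the solutions of
$$
\tan\tau=\tau\mu\frac{1+\bar h_3^2}{\mu\bar h_3^2-1}=-\tau\mu\frac{-1-\bar h_3^2}{-1+\mu\bar h_3^2},
$$
which is the stated equation; its right-hand side is a line through the origin with negative slope. I will then argue from the graph, as for Fig.~\ref{fig-conj}:
\begin{enumerate}
\item on $(0,\frac{\pi}{2})$ we have $\tan\tau>0$ while the line is negative, so there is no root there;
\item on $(\frac{\pi}{2},\pi)$, $\tan\tau$ increases continuously from $-\infty$ to $0$, while the line stays at a negative value bounded away from $0$;
\item hence there is exactly one intersection in $(\frac{\pi}{2},\pi)$, and it is the first positive root $\tau_{\mathrm{conj}}(\bar h_3)$.
\end{enumerate}

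\textbf{Boundary case $\bar h_3^2=\frac1\mu$ (light-like, Lemma~\ref{lem-light}).} The bracket reduces to $-\tau\mu(1+\bar h_3^2)\cos\tau$, whose first positive root is $\tau=\frac{\pi}{2}$. This agrees with the equation read as $\tan\tau=\infty$, i.e. the limiting case of infinite slope. It gives the closed endpoint $\tau_{\mathrm{conj}}\in[\frac{\pi}{2},\pi)$.

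Converting back via notation~\eqref{eq-notation}, $\tau=\frac{t|h|}{2I_1}$, gives $\tconj(h)=\frac{2\tau_{\mathrm{conj}}(\bar h_3)I_1}{|h|}$.

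\textbf{Case $\Kil(h)=0$.} Here $h_3=0$, and I will repeat the Maslov index homotopy argument from the proof of Theorem~\ref{th-oblate-conj}. Suppose, for contradiction, that $\tconj(h)<+\infty$. Since conjugate times are isolated, choose $t_1>\tconj(h)$ that is not a conjugate time. Deform $h$ along a curve $\gamma(s)$ with $\Kil(\gamma(s))<0$ for $s>0$ and $|\gamma(s)|<\frac{2\cdot\frac{\pi}{2}I_1}{t_1}=\frac{\pi I_1}{t_1}$. Since $\tau_{\mathrm{conj}}\ge\frac{\pi}{2}$, this bound guarantees that the arcs $\Gamma_{\gamma(s)}|_{[0,t_1]}$ contain no conjugate points for $s>0$. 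Homotopy invariance of the Maslov index then forces none for $s=0$, a contradiction.

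The step I expect to need the most care is the boundary case $\mu\bar h_3^2=1$, where the line degenerates, and the requirement that the deformation $\gamma(s)$ can be chosen on the correct level set with $|\gamma(s)|\to0$. The latter holds because $H=-\frac12$ time-like covectors accumulate on the cone $\Kil=0$ with $|h|\to0$ (Fig.~\ref{fig-initial-covectors}, $\mu>0$).
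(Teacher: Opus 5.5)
Your proposal follows the paper's proof essentially step by step. The common steps are the factorized Jacobian from Theorem~\ref{th-oblate-conj} with $\delta(h)=-1$, the first root of the bracket located in $(\frac{\pi}{2},\pi)$ when $\mu\bar h_3^2<1$, the root $\frac{\pi}{2}$ for the light-like covectors with $\bar h_3^2=\frac{1}{\mu}$, and the Maslov-index homotopy for $\Kil(h)=0$ with the bound $|\gamma(s)|<\frac{\pi I_1}{t_1}$. Your sign argument for the bracket is a clean replacement for Fig.~\ref{fig-prolate-conj}.

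The one point to correct is the justification in your final paragraph, which is false for $\mu>0$.
\begin{itemize}
\item Since $H(h)=\frac{1}{2I_1}\bigl(\Kil(h)+\mu h_3^2\bigr)$, on the level set $H=-\frac12$ you get $|h|^2=I_1+\mu h_3^2\geqslant I_1$. So in the prolate case unit time-like covectors never have small $|h|$. The accumulation with $|h|\to 0$ that you describe happens only for $\mu<0$, where $\Kil(h)=-I_1+|\mu|h_3^2$ can vanish on $H=-\frac12$.
\item The covector you deform from has $H(h)=0$ (it has $h_3=0$, $h_1^2=h_2^2$, $|h|=0$), so no curve starting there can lie on $H=-\frac12$.
\end{itemize}
Taken literally, your requirement that $\gamma$ stay on the level set cannot be met.

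The fix is what the paper actually does: its curve $\gamma$ is not restricted to a level set, and you should work in the open cone $\{H<0\}$ instead.
\begin{itemize}
\item Since $H$ is quadratic, $\Exp(\lambda h,t)=\Exp(h,\lambda t)$.
\item Formulas~\eqref{eq-hyperbolic-time-geodesic}, written in terms of $\tau=\frac{t|h|}{2I_1}$ and $\bar h=h/|h|$, hold for every such $h$.
\item Hence $\tconj(h)=\frac{2\tau_{\mathrm{conj}}(\bar h_3)I_1}{|h|}\geqslant\frac{\pi I_1}{|h|}$ on this cone.
\end{itemize}
For instance, $\gamma(s)=(h_1-s,h_2,0)$ gives $\Kil(\gamma(s))=-2s|h_1|-s^2<0$, $H(\gamma(s))<0$ and $|\gamma(s)|\to 0$. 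With this curve your homotopy argument goes through.
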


\begin{proof}
Note that in the prolate case $\mu > 0$ almost all extremal trajectories have the form~\eqref{eq-hyperbolic-time-geodesic}.
Moreover, for the time-like extremal trajectories the corresponding $\bar{h}_3 \in (-\frac{1}{\sqrt{\mu}}, \frac{1}{\sqrt{\mu}})$ by Lemma~\ref{lem-h3}\,(2).

Repeating the proof of Theorem~\ref{th-oblate-conj} we obtain that the first conjugate time is determined by the smallest root of the equation $\sin{\tau} = 0$ and the equation~\eqref{eq-tgth} which takes the form
\begin{equation}
\label{eq-prolate-conj}
\tan{\tau} = -\tau\mu\frac{-1-\bar{h}_3^2}{-1+\mu\bar{h}_3^2},
\end{equation}
where $\mu > 0$, $-1-\bar{h}_3^2 < 0$ and $-1 + \mu\bar{h}_3^2 < 0$ for $\bar{h}_3^2 < \frac{1}{\mu}$.
This implies that equation~\eqref{eq-prolate-conj} has a zero at the interval $(\frac{\pi}{2}, \pi)$, see Fig.~\ref{fig-prolate-conj}.
Of course, this zero is less than the first positive zero of the equation $\sin{\tau} = 0$, i.e., $\pi$.

For light-like extremal trajectories such that $\Kil{(h)} \neq 0$ we have $\bar{h}_3 = \pm \frac{1}{\sqrt{\mu}}$.
The conjugate time is determined by the first positive root of the expression in the square brackets in formula~\eqref{eq-conj-full}.
This root equals $\frac{\pi}{2}$.

\begin{figure}
\centering{
     \begin{minipage}[h]{0.45\linewidth}
        \center{\includegraphics[width=0.7\linewidth]{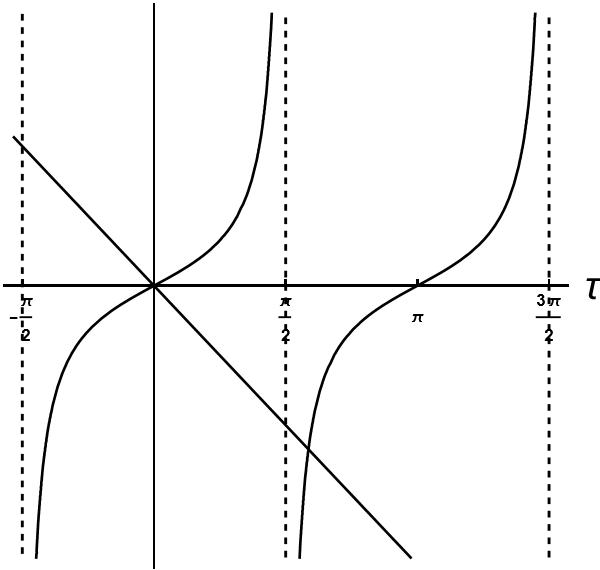}}
        \caption{\label{fig-prolate-conj}The first positive root of equation~\eqref{eq-prolate-conj} determines the first conjugate time.}
     \end{minipage}
     \hfill
     \begin{minipage}[h]{0.45\linewidth}
        \center{\includegraphics[width=0.7\linewidth]{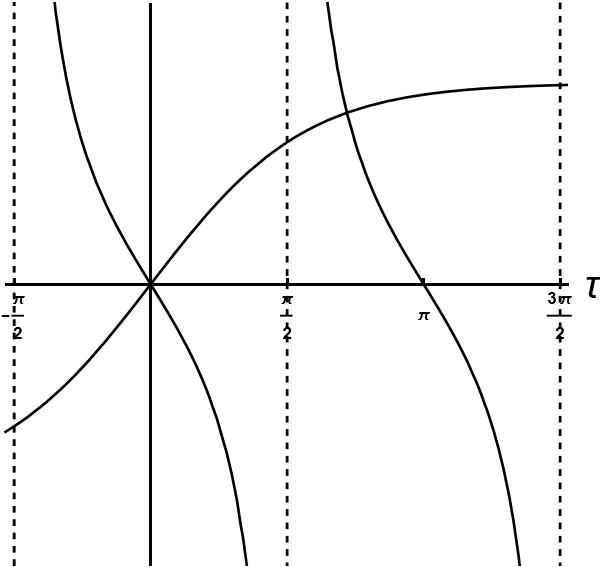}}
        \caption{\label{fig-prolate-maxwell}The first Maxwell time for symmetries in the prolate case.}
     \end{minipage}
     }
\end{figure}

Remind that $t = \frac{2\tau I_1}{|h|}$.
It remains to prove that there are no conjugate points on the light-like extremal trajectories with initial covectors $h$ such that $\Kil{(h)} = 0$.
Since the first positive root of equation~\eqref{eq-prolate-conj} is located at the interval $[\frac{\pi}{2}, \pi)$, then there are no conjugate points on the time interval $[0,t_1]$ for time-like extremal trajectories with initial covectors $h$ such that $|h| < \frac{\pi I_1}{t_1}$.
Hence, we can repeat the argumentation from the proof of Theorem~\ref{th-oblate-conj} using the homotopy invariance of the number of conjugate points and
choosing a curve of initial covectors for time-like extremal trajectories $\gamma : [0,1] \rightarrow \g^*$ such that $|\gamma(s)| < \frac{\pi I_1}{t_1}$.
\end{proof}

\subsection{\label{sec-prolate-maxwell}The Maxwell set}

The symmetry group for the exponential map contains the group $\mathrm{Sym} = \mathrm{O}_{1,1} \times \Z_2$ as in the oblate case, see Proposition~\ref{prop-sym}.

\begin{theorem}
\label{th-prolate-maxwell}
\emph{(1)} The first Maxwell time for symmetries equals
$$
\tmax = \left\{
\begin{array}{lcr}
\frac{2\tau_3(\bar{h}_3)I_1}{|h|}, & \text{if} & \Kil{(h)} < 0,\\
+\infty, & \text{if} & \Kil{(h)} = 0,\\
\end{array}
\right.
$$
where $\tau_3(\bar{h}_3)$ is the first positive root of the equation $q_3(\tau) = 0$, see~\emph{\eqref{eq-hyperbolic-time-geodesic}}.
Moreover, $\tau_3(\bar{h}_3) \in (\frac{\pi}{2}, \pi)$.\\
\emph{(2)} The first conjugate time is less or equal to the first Maxwell time for symmetries
$$
\tconj(h) \leqslant \tmax(h),
$$
where there is the equality only if $\Kil{(h)} = 0$, in this case $\tconj(h) = \tmax(h) = +\infty$.
\end{theorem}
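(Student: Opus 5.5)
The plan is to compute separately the Maxwell times generated by the three generators of $\mathrm{Sym}$ (Proposition~\ref{prop-sym}), as in the proof of Proposition~\ref{prop-oblate-tmax}, and then to compare the smallest one with the root $\tau_{\mathrm{conj}}$ from Theorem~\ref{th-prolate-conj}. By Lemma~\ref{lem-h3}\,(2), almost all extremal trajectories are of type~\eqref{eq-hyperbolic-time-geodesic} with $\bar{h}_3^2\leqslant\frac{1}{\mu}$. The two exceptional trajectories with $\Kil{(h)}=0$ are given by~\eqref{eq-hyperbolic-light-geodesic} with $h_3=0$. They have no rotation fixed points and no $q_3$ sign change, so they give $\tmax=+\infty=\tconj$ (the latter by Theorem~\ref{th-prolate-conj}).

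\textbf{Step 1: the three Maxwell times.} For the hyperbolic rotations, and likewise for the reflection $h_2=0$, the fixed points are $q_1=q_2=0$. By~\eqref{eq-hyperbolic-time-geodesic} this means $\sin\tau=0$, so $\tau_r=\pi$. For the reflection $h_3=0$ we need the first positive root of $q_3(\tau)=0$. Since $\cos\tau=0$ would force $\sin\tau=0$, this equation is equivalent to
$$
-\tan\tau=\frac{\tanh(\tau\mu\bar{h}_3)}{\bar{h}_3}=:g(\tau).
$$
Assume $\bar{h}_3>0$; the case $\bar{h}_3<0$ follows by the reflection, since $g$ is even in $\bar{h}_3$. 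Then $g>0$ is increasing on $(0,+\infty)$. The function $-\tan\tau$ is negative on $(0,\frac{\pi}{2})$ and decreases from $+\infty$ to $0$ on $(\frac{\pi}{2},\pi)$. Hence the first root $\tau_3$ is unique in $(\frac{\pi}{2},\pi)$, so $\tau_3<\tau_r=\pi$ and $\tmax=\frac{2\tau_3I_1}{|h|}$. For $\bar{h}_3=0$ the trajectory lies in $q_3=0$ and the reflection gives no Maxwell point; there I would take $\tau_3$ as the limit $\bar{h}_3\to0$ of the root of the reduced equation $-\tan\tau=\mu\tau$, which still lies in $(\frac{\pi}{2},\pi)$. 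I would also check that this value is genuinely a Maxwell time of the limiting family, or else fall back on $\tau_r=\pi$. In either case the comparison below is unaffected.

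\textbf{Step 2: comparison with the conjugate time.} Put $k=\mu\frac{1+\bar{h}_3^2}{1-\mu\bar{h}_3^2}$. By Theorem~\ref{th-prolate-conj}, $\tau_{\mathrm{conj}}$ is the first root of $-\tan\tau=k\tau$ in $(\frac{\pi}{2},\pi)$. Two elementary inequalities give
$$
g(\tau)\leqslant\mu\tau \quad\text{because } \tanh x\leqslant x, \qquad k\geqslant\mu \quad\text{because } \frac{1+\bar{h}_3^2}{1-\mu\bar{h}_3^2}\geqslant1,
$$
and together they yield $k\tau>g(\tau)$ for $\tau>0$; at least one of the two is strict when $\tau>0$. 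On $(\frac{\pi}{2},\pi)$ the left-hand side $-\tan\tau$ decreases from $+\infty$ to $0$, while both right-hand sides are positive and increasing. Therefore the decreasing curve meets the larger function $k\tau$ strictly before it meets $g$, which gives $\tau_{\mathrm{conj}}<\tau_3$ and hence $\tconj(h)<\tmax(h)$ whenever $\Kil{(h)}<0$. The boundary case $\bar{h}_3^2=\frac{1}{\mu}$ (light-like with $\Kil{(h)}\neq0$) is handled directly: Theorem~\ref{th-prolate-conj} gives $\tau_{\mathrm{conj}}=\frac{\pi}{2}$, while $\tau_3>\frac{\pi}{2}$.

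\textbf{Main obstacle.} The step needing the most care is the degenerate situation: the case $\bar{h}_3=0$, where the reflection $h_3=0$ acts trivially on the trajectory, and the precise definition of $\tau_3$ there. The monotone-crossing comparison itself is routine once $g(\tau)\leqslant\mu\tau\leqslant k\tau$ is observed.
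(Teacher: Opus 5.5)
Your proposal follows the paper's proof. The rotation (and $h_2$-reflection) Maxwell time is $\tau=\pi$, and the $h_3$-reflection gives the root of $-\tan\tau=\tanh(\tau\mu\bar h_3)/\bar h_3$ in $(\frac{\pi}{2},\pi)$. Part (2) then follows by comparing with $-\tan\tau=k\tau$ using $k>\mu$. Your explicit use of $\tanh x\leqslant x$ and of monotonicity on $(\frac{\pi}{2},\pi)$ makes rigorous what the paper reads off its figures. For $\bar h_3\neq0$ and for the light-like cases, your argument is correct.

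One correction concerns the time-like case $\bar h_3=0$, $\Kil(h)<0$. The paper's proof actually skips this case, because it identifies $\bar h_3=0$ with $\Kil(h)=0$. You claim that defining $\tau_3$ there as the limiting root of $-\tan\tau=\mu\tau$ leaves the comparison unaffected, and that claim is false. At $\bar h_3=0$, with $g$ extended by continuity, $k=\mu$ and $g(\tau)=\mu\tau$. So neither of your two inequalities is strict, and this choice gives $\tau_{\mathrm{conj}}=\tau_3$ with $\Kil(h)<0$, contradicting the strictness asserted in (2).

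That limiting value is a conjugate time, not a Maxwell time. When $h_3=0$, the reflection in $h_3$ fixes $h$, so no Maxwell point comes from it, and only your fallback $\tau_r=\pi$ is correct. The fallback gives $\tconj(h)<\tmax(h)=\frac{2\pi I_1}{|h|}$. It also means the formula in (1), with $\tau_3\in(\frac{\pi}{2},\pi)$, holds only for $h_3\neq0$.
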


\begin{figure}
\centering{
     \begin{minipage}[h]{0.45\linewidth}
        \center{\includegraphics[width=0.7\linewidth]{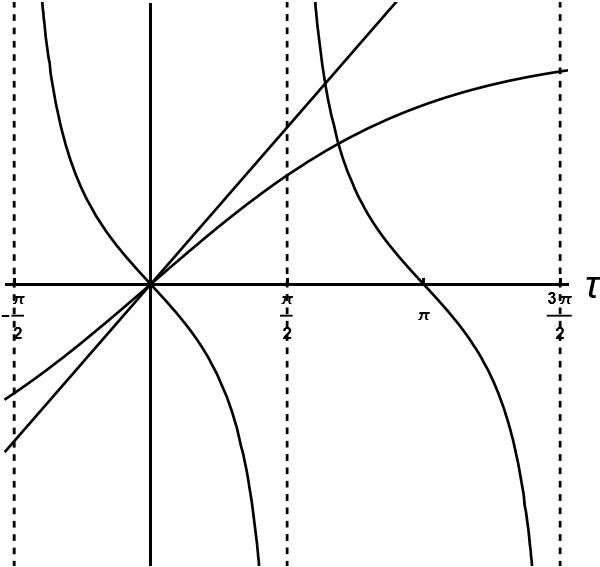}}
        \caption{\label{fig-prolate-maxwell-conj}The first conjugate time is less than the first Maxwell time for symmetries in the prolate case.}
     \end{minipage}
     \hfill
     \begin{minipage}[h]{0.45\linewidth}
        \center{\includegraphics[width=\linewidth]{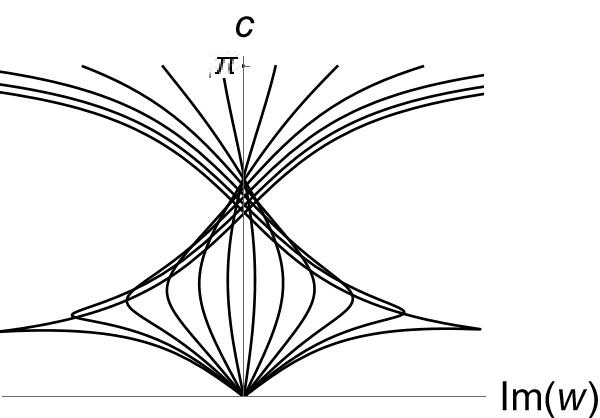}}
        \caption{\label{fig-prolate-maxwell-stratum}The extremal trajectories in the prolate case ($\SO_{1,1}$-factor).}
     \end{minipage}
     }
\end{figure}

\begin{proof}
Note that if $\Kil{(h)} = 0$, i.e., $\bar{h}_3 = 0$, then the first conjugate time equals $+\infty$ and the corresponding extremal trajectories lie in the plane $\Image{w} = 0$.
Assume that $\Kil{(h)} < 0$ and $\bar{h}_3 > 0$ (the case $\bar{h}_3 < 0$ is similar).

(1) We should compare the first Maxwell times for the hyperbolic rotations and the symmetries with respect to the planes $h_3 = 0$ and $h_2 = 0$.
Similarly to the proof of Proposition~\ref{prop-oblate-tmax} we need to compare the number $\pi$ and the first positive root of the first equation of~\eqref{eq-oblate-maxwell}, i.e.,
$$
\frac{1}{\bar{h}_3} \tanh{(\tau\mu\bar{h}_3)} = -\tan{\tau}.
$$
We see that the graphs of the functions at the left-hand and the right-hand sides of this equation look like graphs on Fig.~\ref{fig-prolate-maxwell}.
Thus, the first positive root is located at the interval $(\frac{\pi}{2}, \pi)$ and automatically is less than $\pi$.

(2) Since the first conjugate time for extremal trajectories with initial covectors $h$ such that $\Kil{(h)} \neq 0$ is equal to $+\infty$,
we may assume that $h$ is such that $\Kil{(h)} < 0$.
The first conjugate time for time-like extremal trajectories (i.e., $\bar{h}_3 \in (-\frac{1}{\sqrt{\mu}},\frac{1}{\sqrt{\mu}})$) determines by equation~\eqref{eq-prolate-conj} which is equivalent to
$$
-\tan{\tau} = \tau\mu\frac{-1-\bar{h}_3^2}{-1+\mu\bar{h}_3^2}.
$$
Note that the coefficient of the line at the left-hand side of this equation is positive.
Moreover, this coefficient is greater than $\frac{d}{d\tau}|_{\tau = 0} \left( \frac{1}{\bar{h}_3} \tanh{(\tau\mu\bar{h}_3)} \right) = \mu$.
Indeed, since $-1+\mu\bar{h}_3^2 < 0$ and remember that $\bar{h}_3 \in (-\frac{1}{\sqrt{\mu}}, \frac{1}{\sqrt{\mu}})$ we have
$$
\mu\frac{-1-\bar{h}_3^2}{-1+\mu\bar{h}_3^2} > \mu \qquad \Leftrightarrow \qquad -1-\bar{h}_3^2 < -1+\mu\bar{h}_3^2.
$$
Hence, this line intersects the graph of the function $-\tan{\tau}$ earlier than the graph of $\frac{1}{\bar{h}_3} \tanh{(\tau\mu\bar{h}_3)}$ intersects the graph of $-\tan{\tau}$, see Fig.~\ref{fig-prolate-maxwell-conj}.

For $\bar{h}_3 = \pm \frac{1}{\sqrt{\mu}}$ the first conjugate time equals $\frac{\pi}{2}$ which is less than the first Maxwell time as well.
\end{proof}

\begin{remark}
\label{rem-prolate-maxwell-stratum}
In the prolate case, conjugate points appears earlier than Maxwell points unlike the oblate case and the situation in (sub-)Riemannian geometry.
Fig.~\ref{fig-prolate-maxwell-stratum} presents the extremal trajectories in the prolate case, the picture is plotted in the factor of the action of the group $\SO_{1,1}$.
There exists an envelope curve for extremal trajectories (the caustic).
Then extremal trajectories with opposite values of $\bar{h}_3$ meet one another at the plane $\Image{w} = 0$.
This is the first Maxwell stratum corresponding the symmetry with respect to the plane $h_3 = 0$ in the pre-image of the exponential map
(with respect to the plane $\Image{w} = 0$ in the image of the exponential map).
\end{remark}

\end{document}